\newtheorem{lem}{Lemma}[section]
\newtheorem{thm}[lem]{Theorem}
\begin{document}

	\title{ A characterization of 4-connected graphs with no $K_{3,3}+v$-minor}
	\author{ Linsong  Wei, \quad Yuqi  Xu, \quad Weihua Yang\footnote{Corresponding author. E-mail: ywh222@163.com,~yangweihua@tyut.edu.cn}, \quad Yunxia Zhang\\
		\\ \small Department of Mathematics, Taiyuan University of
		Technology,\\
		\small  Taiyuan Shanxi-030024,
		China\\}
	\date{}

	\maketitle
\noindent{\bf Abstract:}
		Among graphs with 13 edges, there are exactly three internally 4-connected graphs which are $Oct^{+}$, cube+e and $ K_{3,3} +v$. A complete characterization of all 4-connected graphs with no $Oct^{+}$-minor is given in [John Maharry, An excluded minor theorem for the octahedron plus an edge, Journal of Graph Theory 57(2) (2008) 124-130]. Let $K_{3,3}+v$ denote the graph obtained by adding a new vertex $v$ to $K_{3,3}$ and joining $v$ to the four vertices of a 4-cycle. In this paper, we determine all 4-connected graphs that do not contain $K_{3,3}+v$ as a minor.
		
	\noindent{\bf Keywords:}  forbidden minor,  4-connected graph,  $ K_{3,3} +v$ 
	\section{Introduction}
		All graphs in this paper are simple. Let $G$ and $H$ be two graphs. $H$ is called a $ minor $ of $G$ if it can be generated by deleting or contracting edges from $G$. $G$ is called $H$-$ minor $-$ free $ if no minor of $ G $ is isomorphic to $H$. The Robertson-Seymour Graph Minors project has shown that minor-closed classes of graphs can be described by finitely many forbidden minors. We can get some graph classes which have many interesting properties in the process of excluding small minors. In addition, many important problems in graph theory are about the property of the $H$-minor-free graphs. For instance, Tutte$^{'}$s 4-flow conjecture asserts that every bridgeless Petersen-minor-free graph admits 4-fiow.
		
		Ding \cite{smallgraph} surveyed all $H$-minor-free graphs for 3-connected $H$ with at most 11 edges, including $Oct$$\backslash$$e$. For 3-connected graphs with 12 edges, Maharry \cite{cube} characterized 3-connected cube-minor-free graphs, Maharry and Ding \cite{oct,oct1} characterized 3-connected $Oct$-minor-free graphs, and Maharry and Robertson \cite{wanger} characterized the $V_8$-minor-free graphs. There are 51 3-connected graphs with 13 edges. Among the graphs with 13 edges, there are exactly three internally 4-connected graphs. These three graphs are $Oct^{+}$, cube+e and  $K_{3,3} +v$. The $Oct^{+}$ is a 4-connected graph which is obtained from the octahedron by adding an edge. Maharry characterized all 4-connected $Oct^{+}$-minor-free graphs in \cite{octe}. The cube+e is obtained by adding a long diagonal to the cube. In this paper, we characterize the 4-connected $K_{3,3}+v$-minor-free graphs, where $K_{3,3}+v$ is obtained by adding a new vertex $v$ to $K_{3,3}$ and joining $v$ to the four vertices of a 4-cycle in this paper.
		
		We use $G/e$ to denote the graph obtained from $G$ by first contracting $e$ and then deleting all but one edge from each parallel family and $G$$\backslash$$e$ to denote the graph obained from $G$ by deleting the edge $e$. For each integer $n\ge3$, let $DW_n$ denote a {\it double-wheel}, which is a graph on $n+2$ vertices obtained from a cycle $C_n$ by adding two adjacent vertices and connecting them to all vertices on the cycle. Let $\mathcal {DW}=\{DW_n:n\ge3\}$. For each integer $n\ge5$, let $C_n^2$ be a graph obtained from a cycle $C_n$ by joining all pairs of vertices of distance two on the cycle. Let $\mathcal C_0=\{C_{2n}^2:n\ge3\}$, $\mathcal C_1=\{C_{2n+1}^2:n\ge2\}$, and $\mathcal C=\mathcal C_0\cup\mathcal C_1$. The graph $L(H)$ is called the $line $ $graph$ of $G$ if $V(L(G))=E(G)$, and for any two vertices $e$, $f$ in $V(L(G))$, $e$ and $f$ are adjacent vertices if and not only if they are adjacent edges in $G$.
		
			\begin{thm}\label{thm1.2}
		A 4-connected graph $G$ is $ K_{3,3}+v $-$ minor $-$ free $ if and only if either G is planar or G belongs to $\mathcal {DW}$ $\cup$ $\mathcal C_1$ $\cup$ $\mathcal {M}$ $\cup$ $\{$$L(K_{3,3})$, $K_6$, $K_6$$\backslash$$e$, $DW_4$, $\Gamma_1$, $\Gamma_2$, $\Gamma_3$$\}$, where $\Gamma_1$,$\ldots$,$\Gamma_3$ are the three graphs shown below.
		\end{thm}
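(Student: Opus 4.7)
The plan is to split the proof into the straightforward sufficiency direction and the harder necessity direction. For sufficiency, I would verify family-by-family that none of the listed graphs contains $K_{3,3}+v$ as a minor. Since $K_{3,3}+v$ contains $K_{3,3}$ it is nonplanar, so planar graphs are immediately excluded. For the infinite families $\mathcal{DW}$, $\mathcal{C}_1$, and $\mathcal{M}$, and for the sporadic list $L(K_{3,3})$, $K_6$, $K_6\backslash e$, $DW_4$, $\Gamma_1, \Gamma_2, \Gamma_3$, the verification reduces to a bounded structural check. A helpful tool is the rigid degree sequence $(4,4,4,4,4,3,3)$ of $K_{3,3}+v$, together with the fact that the two degree-$3$ vertices are nonadjacent and joined by three internally disjoint length-two paths through the degree-$4$ vertices; ruling out such a configuration in each target graph precludes the forbidden minor.

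For necessity, let $G$ be a 4-connected, nonplanar, $K_{3,3}+v$-minor-free graph. I would first split on whether $G$ contains a $K_{3,3}$-minor. If $G$ is $K_{3,3}$-minor-free, then Wagner's structure theorem expresses $G$ as a collection of $2$-sums of planar pieces and $K_5$; since $G$ is 4-connected and nonplanar this forces $G=K_5=DW_3\in\mathcal{DW}$, and we are done. Henceforth assume $G$ has a $K_{3,3}$-minor.

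Fix a $K_{3,3}$-model $(A_1,A_2,A_3\mid B_1,B_2,B_3)$ in $G$ of minimum total size and consider an external vertex $v\in V(G)\setminus\bigcup_i(A_i\cup B_i)$. The key observation is the following: if $v$ can be joined by internally disjoint paths to two branch sets among $\{A_1,A_2,A_3\}$ and two among $\{B_1,B_2,B_3\}$ (with these paths avoiding the other two branch sets), then the four chosen sets form a 4-cycle in the $K_{3,3}$-model, and extending them with the connecting paths together with the singleton $\{v\}$ yields a $K_{3,3}+v$-minor. Hence every external vertex must have its attachment concentrated on at most one side of the bipartition. Combined with the 4-connectivity of $G$, this severely restricts the possible attachment patterns; I would classify them and show each yields a graph in $\mathcal{DW}\cup\mathcal{C}_1\cup\mathcal{M}\cup\{L(K_{3,3}),K_6,K_6\backslash e,DW_4,\Gamma_1,\Gamma_2,\Gamma_3\}$. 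An induction on $|V(G)|$ together with a splitter-type reduction handles the general case: the small graphs $L(K_{3,3}),K_6,K_6\backslash e,DW_4,\Gamma_1,\Gamma_2,\Gamma_3$ serve as base cases, and the infinite families $\mathcal{DW}$, $\mathcal{C}_1$, $\mathcal{M}$ are generated by ``growing'' the attachment pattern while preserving 4-connectivity and $K_{3,3}+v$-minor-freeness.

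The principal obstacle is the delicate case analysis of attachment patterns. The $K_{3,3}$-model is not unique, and a minimum-size choice is essential so that rerouting through alternative models cannot expose a hidden 4-cycle attachment; this demands careful minor-minimality and augmenting-path type arguments. Furthermore, when several external vertices share attachment neighborhoods, one must ensure that no combined rerouting produces a forbidden configuration, and the fine distinction between the families $\mathcal{DW}$, $\mathcal{C}_1$, $\mathcal{M}$ and the sporadic graphs $\Gamma_1,\Gamma_2,\Gamma_3$ only emerges after a detailed enumeration. Verifying exhaustiveness---that no exotic 4-connected $K_{3,3}+v$-minor-free graph slips through the classification---is where the bulk of the technical work is expected to lie.
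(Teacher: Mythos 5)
There is a genuine gap: your necessity argument is an outline whose decisive steps are never carried out, and it is not the paper's route. The paper does not analyse attachments to a minimum $K_{3,3}$-model at all; it invokes the prior result (Theorem~\ref{thm:jia}, from \cite{jia}) that a nonplanar 4-connected $K_{3,3}+v$-minor-free graph is $C_{2k+1}^2$, $L(K_{3,3})$, or obtained from $C_5^2$ by repeatedly 4-splitting vertices, combines it with the fact (Lemma~\ref{lem2.1}) that the only 4-splits of $C_5^2$ are $K_6$, $K_6\backslash e$ and $DW_4$, and then exhaustively enumerates all further 4-splits (Lemmas~\ref{lem3.1}--\ref{lem3.15}) to show the splitting process produces exactly the double wheels and $\Gamma_1,\Gamma_2,\Gamma_3$. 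Your plan, by contrast, would have to re-derive a chain/structure theorem from scratch: the sentences ``I would classify them and show each yields a graph in the list'' and ``an induction together with a splitter-type reduction handles the general case'' are precisely the content of the theorem, and no argument is supplied for them. You yourself flag that exhaustiveness is where the bulk of the work lies; that work is the proof, and it is missing.

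Two of the concrete claims you do make are also shaky. First, the dichotomy you extract from the key observation is mis-stated: the negation of ``$v$ attaches by suitable disjoint paths to two branch sets on each side'' is that $v$ reaches at most one branch set on \emph{some} side (e.g.\ three $A_i$'s and one $B_j$ is allowed), not that its attachment is ``concentrated on at most one side of the bipartition''; a classification built on the stronger, incorrect dichotomy would miss cases. One must also control attachments of connected external components, not only single vertices, and justify that a minimum-size model cannot be rerouted to create the 4-cycle attachment --- none of this is done. Second, for sufficiency your degree-sequence/configuration test is not sound for minors: contraction does not preserve degree sequences, so ruling out a subgraph with the degree pattern of $K_{3,3}+v$ does not preclude a $K_{3,3}+v$-minor, and for the infinite families $\mathcal{DW}$ and $\mathcal{C}_1$ you need an argument uniform in $n$ (in the paper this direction is inherited from \cite{jia} and from the split-by-split verifications). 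As it stands the proposal identifies the right shape of the problem but proves neither direction.
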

	
\begin{figure}[!htb]
	\centering
	\includegraphics[width=0.7\linewidth]{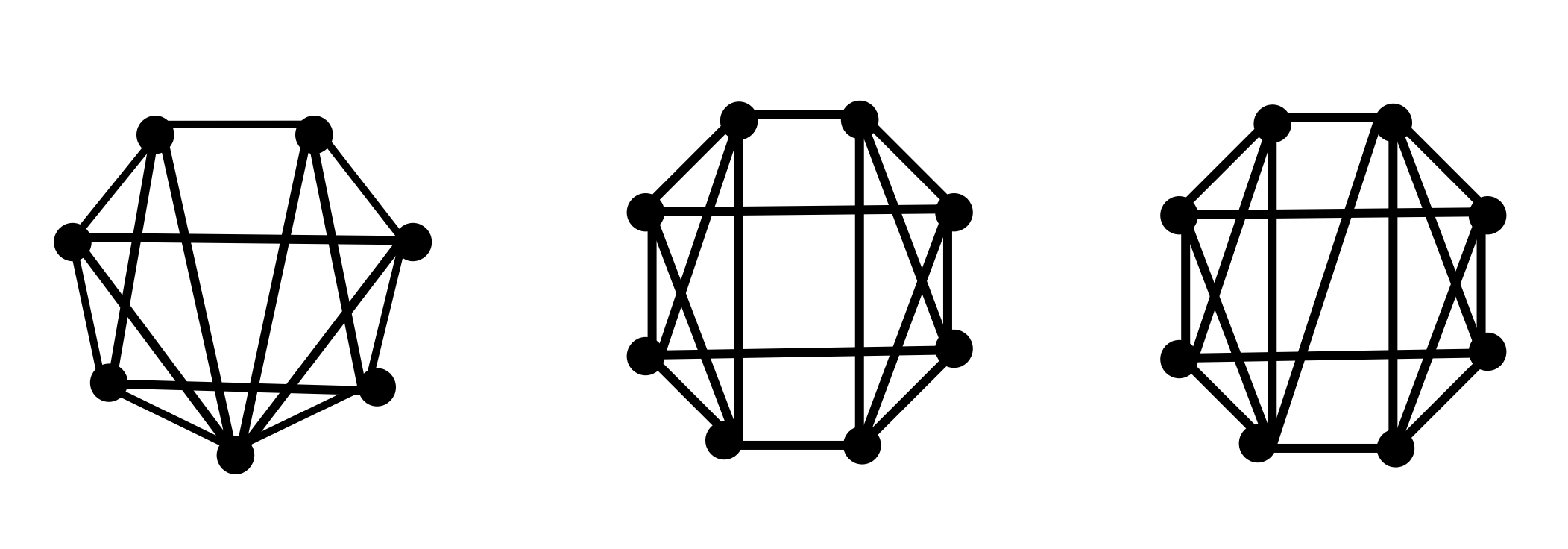}
	\caption{ $\Gamma_1$, $\Gamma_2$, $\Gamma_3$}
	\label{fig:01}
\end{figure}

	\section{Preliminaries}
       Let $v$ be a vertex of a 4-connected graph $G$. Let $N_G(v)$ denote the set of vertices of $G$ that are adjacent to a vertex $v$. A $4$-$split$ of $v$ produces a new $G^{'}$ as follows. Given two sets, $A, B$$\subseteq$$N_G(v)$, where $A$ $\cup$ $B$=$N_G(v)$ and min\{$|A|,|B|$\}$\geq$ 3, the graph $G^{'}$ is obtained by replacing the vertex $v$ in $G$ with vertices $a$ and $b$ such that $N_{G^{'}}(a)$=$A$ $\cup$ \{$b$\} and  $N_{G^{'}}(b)$=$B$ $\cup$ \{$a$\}. Note that the graph $G^{'}$ is also 4-connected.
       
       The following theorem of Martinov is a classical result to charactize all 4-connected graphs.
       
       \begin{thm}[\cite{4c}] \label{thm:chain1}
       Let G be a 4-connected graph. There exists a sequence of 4-connected graphs $H_0,\ldots,H_t$ $($t$\ge$$0)$ such that G is isomorphic to $H_0$ and $H_i$ is obtained from $H_{i-1}$ by contracting some edges e $\in$ E $($$H_{i-1}$$)$ and deleting any resulting parallel edges. Moreover, $H_t$ is either isomorphic to $C_n^2$ for some $n \ge 5$ or isomorphic to the line graph of a cubic cyclically 4-connected graph.
       \end{thm}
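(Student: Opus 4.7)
The plan is to proceed by induction on $|V(G)|$, reducing the entire statement to a structural classification of contraction-critical 4-connected graphs, namely those in which no single edge can be contracted (followed by deletion of parallel edges) to yield another 4-connected graph. Call an edge $e \in E(G)$ \emph{contractible} in the 4-connected graph $G$ if $G/e$ is 4-connected after the deletion of parallel edges. If $G$ has a contractible edge $e$, set $H_1 = G/e$; by induction $H_1$ admits a chain $H_1, \ldots, H_t$ ending in a graph of the required form, and prepending $H_0 = G$ completes the chain. The real content of the theorem is therefore: every contraction-critical 4-connected graph is isomorphic either to $C_n^2$ for some $n \ge 5$ or to the line graph of a cubic cyclically 4-connected graph.

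To attack the critical case I would first extract the basic structural information from non-contractibility. If $e = uv$ is non-contractible, then contracting $e$ produces a graph with a vertex cut of size at most $3$; pulling this back gives a 4-cut $T \subseteq V(G)$ containing both $u$ and $v$. Record, for each edge, such a distinguished 4-cut. The next step is an uncrossing argument: two 4-cuts $T_1, T_2$ in a 4-connected graph cannot cross too wildly, since the submodular inequality $|T_1 \cap T_2| + |T_1 \cup T_2| \le |T_1| + |T_2|$ combined with 4-connectedness forces most pairs of 4-cuts to be nested, laminar, or to share a common edge. Running this analysis on the 4-cuts associated with all edges at a fixed vertex $v$ yields a rigid local picture at $v$.

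The heart of the proof is a vertex-by-vertex structural dichotomy. If every vertex of $G$ has degree exactly $4$, one argues that the four edges at each $v$ split into two pairs corresponding to the two triangles of a $K_4$-neighborhood; this is precisely the local picture at a vertex of the line graph of a cubic graph, and globally one recovers a cubic graph $H$ with $G \cong L(H)$. The cyclic 4-connectedness of $H$ then follows from the 4-connectedness of $G$, since a short cycle in $H$ whose deletion disconnects $H$ would produce a separator of size less than $4$ in $L(H)$. If instead $G$ has a vertex of degree at least $5$, the 4-cuts through the edges at that vertex cyclically nest, and propagating this nesting around the graph forces $G \cong C_n^2$ for some $n \ge 5$.

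The principal obstacle is exactly this final dichotomy. The inductive reduction and the existence of 4-cuts through non-contractible edges are routine, but separating the two base-case families, line graphs of cubic cyclically 4-connected graphs versus squared cycles, requires a careful analysis of how multiple 4-cuts through edges at a common vertex can interact, and ruling out mixed or intermediate configurations. This is where Martinov's original argument does its real work, and any reasonable reproof must either reprise the uncrossing plus exhaustive case analysis or invoke a substantial external structural result about 4-cuts in 4-connected graphs.
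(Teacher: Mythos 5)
The paper offers no proof of this statement: it is Martinov's theorem, quoted verbatim with a citation to the original 1982 paper. So the only question is whether your plan would actually yield a proof, and it would not, for two reasons.

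First, your final dichotomy is miscast. Both target families are 4-regular: in $C_n^2$ every vertex is adjacent to exactly the four vertices at distance one or two on the cycle, and the line graph of a cubic graph is 4-regular. Consequently a contraction-critical 4-connected graph never has a vertex of degree at least $5$ (this is in fact one of the intermediate facts one proves: a vertex of degree $\ge 5$ forces a contractible edge), so your second branch cannot ``force $G\cong C_n^2$''; it can only produce a contradiction with criticality. Worse, your first branch claims that 4-regularity plus criticality yields the two-triangles local structure of a line graph of a cubic graph, and this is false: $C_7^2$ is 4-regular and contraction-critical, but it has $7$ vertices and so is not the line graph of any cubic graph (a cubic graph with $m$ vertices has $3m/2$ edges, never $7$), and the neighborhood of each of its vertices induces a path $P_4$, not two disjoint edges. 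The genuine dichotomy in Martinov's argument is by the subgraph induced on the neighborhood of a vertex (two disjoint triangles through $v$ versus a path/fan configuration), not by degree, and establishing which of the two local pictures occurs and then propagating it globally is the entire content of the theorem.

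Second, even where your outline points in the right direction (induction on contractible edges, pulling back a $4$-cut through each non-contractible edge, uncrossing via submodularity), you explicitly defer the classification of contraction-critical graphs to ``an exhaustive case analysis'' or ``a substantial external structural result.'' That classification is not a technical afterthought; it is the theorem. As written, the proposal is a correct reduction of the chain statement to the critical case, followed by a sketch of the critical case whose central dichotomy is wrong and whose hard steps are not carried out.
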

     
     This result has been strengthened by Qin and Ding as follows , which is an important tool to characterize all 4-connected graphs without $ K_{3,3}+v $ as a minor.
     
     \begin{thm}[\cite{chain}] \label{thm:chain2}
     Let G be a 4-connected graph not in  $\cal C\cup L$. If G is planar,then there exists  a $($G,$C_{6}^{2}$$)$-chain; if G is non-planar, then there exists a a $($G,$K_5$$)$-chain.
     \end{thm}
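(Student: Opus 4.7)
The plan is to refine Martinov's chain theorem (Theorem \ref{thm:chain1}) by pushing its terminal down to a single specific graph: $C_6^2$ in the planar case, $K_5$ in the non-planar case. Martinov's theorem already yields a 4-connected contraction chain from $G$ to some graph $H_t$ belonging to either $\mathcal{C}$ or $\mathcal{L}$ (the line graphs of cubic cyclically 4-connected graphs). The task is therefore to extend that chain past $H_t$ whenever $G$ is not already in $\mathcal{C} \cup \mathcal{L}$.

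First I would apply Theorem \ref{thm:chain1} to produce a 4-connected chain $G = H_0, H_1, \ldots, H_t$ with $H_t \in \mathcal{C} \cup \mathcal{L}$; since $G \notin \mathcal{C} \cup \mathcal{L}$ we have $t \ge 1$. A crucial observation is that edge contraction preserves planarity, so every $H_i$ shares the planarity class of $G$; in particular this rules out $K_5$ and the non-planar members of $\mathcal{C}$ as terminals in the planar subcase.

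Next, when $H_t = C_n^2$, I would contract a suitable edge of the underlying cycle to pass from $C_n^2$ to $C_{n-1}^2$ for $n \ge 7$, verifying by a direct separator analysis that no 3-vertex cut appears and 4-connectivity is preserved. Iterating reduces the terminal to $C_6^2$, the octahedron, which finishes the planar subcase. In the non-planar subcase one continues by contracting one more edge of $C_6^2 = K_{2,2,2}$ to arrive at $K_5$.

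When $H_t = L(H)$ for some cubic cyclically 4-connected $H$, I would induct on $|V(H)|$. An edge of $L(H)$ corresponds to a pair of incident edges of $H$, and contracting it in $L(H)$ corresponds to a local surgery at a vertex of $H$ (a Y-suppression); by choosing this vertex carefully one can descend either to $L(H')$ with $H'$ cubic cyclically 4-connected of smaller order, or to some $C_m^2$, reducing in either case to a previously handled situation. The base case is $H = K_4$, where $L(K_4) = C_6^2$. The main obstacle of the whole argument lies here: one must simultaneously preserve the cubic structure, the cyclic 4-connectivity of the auxiliary graph, and the 4-connectivity of its line graph at every step, which requires a careful case analysis on the local structure of $H$ around the chosen vertex together with separate verification for small exceptional graphs such as $H = K_{3,3}$ and $H$ the 3-cube.
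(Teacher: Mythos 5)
This statement is not proved in the paper at all: it is Theorem~2.2 of the manuscript only in the sense that it is quoted, with attribution, from Qin and Ding \cite{chain}, and it is used downstream as a black box. So there is no in-paper proof to compare your sketch against; the only fair comparison is with the actual Qin--Ding argument, which is a substantial paper in its own right, and your outline does not come close to reproducing it.

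Concretely, several steps in your plan fail. First, contracting a cycle edge of $C_n^2$ does not give $C_{n-1}^2$: if you contract the edge $v_1v_2$, the vertex $v_3$ loses a neighbour and ends up with degree $3$, so the result is not even 4-connected; this is exactly why the squares of cycles appear as an exceptional class in both Martinov's theorem and the Qin--Ding chain theorem, and why their notion of chain is not built from single-edge contractions in the naive way you assume. Second, your non-planar subcase collapses: contracting an edge of $C_6^2$ (the octahedron) yields the wheel on five vertices, which is planar, not 4-connected and not $K_5$; worse, $C_6^2$ is planar, so $K_5$ is not a minor of it at all, hence no chain toward $K_5$ can pass through $C_6^2$. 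Third, your claim that every $H_i$ ``shares the planarity class of $G$'' is only half true -- contraction preserves planarity but can destroy non-planarity -- and guaranteeing that a non-planar 4-connected $G$ admits a chain terminating at $K_5$ (rather than drifting into planar graphs or into $\mathcal{C}\cup\mathcal{L}$) is precisely the hard content of the theorem, which you assume rather than prove. Finally, the line-graph case is stated as a plan (``choose the vertex carefully'') with no argument that the required connectivity properties can be preserved. The correct course, and the one the paper takes, is simply to cite \cite{chain}; if you want to prove the statement yourself you would essentially have to redo that paper, not patch Martinov's theorem with the elementary reductions sketched here.
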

 
 We obtain a partial work in \cite{jia} for the characterization of 4-connected $K_{3,3}+v$-minor-free graphs.
   
    \begin{thm}[\cite{jia}] \label{thm:jia}
    	A 4-connected graph $G$ is $ K_{3,3}+v $-minor-free if and only if it is planar, $C_{2k+1}^{2}$ $(k \ge 2)$, $L(K_{3,3})$ or it is obtained from $C_{5}^{2}$ by repeatedly 4-splitting vertices.
    \end{thm}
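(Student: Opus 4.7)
The plan is to establish both implications of Theorem~\ref{thm1.2} using Theorem~\ref{thm:jia} as the primary ingredient, together with a closer analysis of the 4-splitting operation.

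\emph{Sufficiency: every graph on the right-hand side of Theorem~\ref{thm1.2} is $K_{3,3}+v$-minor-free.} Planar graphs contain no $K_{3,3}+v$-minor because $K_{3,3}+v$ properly contains the non-planar graph $K_{3,3}$. Every member of $\mathcal{C}_1$, together with $L(K_{3,3})$, is $K_{3,3}+v$-minor-free by Theorem~\ref{thm:jia}. For each remaining graph $H\in\mathcal{T}:=\mathcal{DW}\cup\mathcal{M}\cup\{K_6, K_6\backslash e, DW_4, \Gamma_1, \Gamma_2, \Gamma_3\}$, I would display a concrete chain of 4-splits leading from $C_5^2=K_5=DW_3$ to $H$; Theorem~\ref{thm:jia} then supplies the $K_{3,3}+v$-minor-freeness automatically. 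For $\mathcal{DW}$, splitting a hub of $DW_n$ along a $3/n$ partition of its neighborhood yields $DW_{n+1}$, so a single induction handles the whole family; an analogous recursion should cover $\mathcal{M}$; and the six exceptional graphs require only short, \emph{ad hoc} chains.

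\emph{Necessity: every 4-connected $K_{3,3}+v$-minor-free graph $G$ lies in the list.} Theorem~\ref{thm:jia} partitions the possibilities for $G$ into four classes: planar, $C_{2k+1}^{2}$ with $k\ge 2$, $L(K_{3,3})$, and iterated 4-splits of $C_5^2$. The first three already appear in the list of Theorem~\ref{thm1.2}. Thus necessity reduces to showing that every iterated 4-split of $C_5^2$ lies in $\mathcal{T}$. I would prove this by induction on the number of 4-splittings used in the construction, with base case $C_5^2 = DW_3 \in \mathcal{T}$. The inductive step amounts to proving that $\mathcal{T}$ is closed under 4-splitting: given $H\in\mathcal{T}$, every vertex $v\in V(H)$, and every admissible partition $(A,B)$ of $N_H(v)$ with $\min\{|A|,|B|\}\ge 3$, the resulting graph $H'$ must lie in $\mathcal{T}$. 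I would keep the case analysis manageable by reducing to one representative per vertex orbit under $\mathrm{Aut}(H)$.

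The main obstacle is this closure step for the infinite families $\mathcal{DW}$ and $\mathcal{M}$. In $DW_n$ with $n$ large, the hubs have degree $n+1$, so there are many admissible partitions of their neighborhoods; I must show that, up to symmetry, each such split either produces $DW_{n+1}$, or reproduces an earlier member of $\mathcal{DW}$ via a redundant split, or yields a graph that can be identified with an element of $\mathcal{M}$ or with one of the six exceptional graphs. A parallel but more intricate analysis is needed for $\mathcal{M}$, where several non-trivial vertex orbits may admit splits and the transitions between $\mathcal{M}$, $\mathcal{DW}$, and the exceptional graphs must be tracked carefully to confirm that no split exits $\mathcal{T}$. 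The six exceptional graphs are small enough that their 4-splits can simply be enumerated. Once closure of $\mathcal{T}$ under 4-splitting is verified, the induction completes the necessity direction, and together with the sufficiency verification this proves Theorem~\ref{thm1.2}.
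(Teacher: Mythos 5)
You were asked to prove Theorem~\ref{thm:jia} itself, but your proposal never proves it: both halves of your argument invoke Theorem~\ref{thm:jia} as a known fact (sufficiency: ``Theorem~\ref{thm:jia} then supplies the $K_{3,3}+v$-minor-freeness automatically''; necessity: ``Theorem~\ref{thm:jia} partitions the possibilities for $G$''). What you have sketched is a derivation of Theorem~\ref{thm1.2} from Theorem~\ref{thm:jia}, which is essentially the program the paper carries out in Section~3; as a proof of Theorem~\ref{thm:jia} it is circular. In the paper this statement is not proved at all but quoted from \cite{jia}; an actual proof has to start from the chain theorems (Theorem~\ref{thm:chain1}, or the strengthening Theorem~\ref{thm:chain2}): a 4-connected graph outside $\mathcal{C}\cup\mathcal{L}$ admits a $(G,C_6^2)$- or $(G,K_5)$-chain, so in the non-planar case $G$ is obtained from $K_5=C_5^2$ by repeated 4-splits, and one must separately decide which members of $\mathcal{C}$ and which line graphs of cubic cyclically 4-connected graphs are $K_{3,3}+v$-minor-free (this is where $C_{2k+1}^2$ and $L(K_{3,3})$ survive while $C_{2k}^2$, $k\ge 3$, and the remaining line graphs do not). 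None of this machinery appears in your proposal.

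Even read as a proof of Theorem~\ref{thm1.2} assuming Theorem~\ref{thm:jia}, your key inductive claim is false: the class $\mathcal{T}$ is not closed under 4-splitting. Lemmas~\ref{lem3.6}, \ref{lem3.7}, \ref{lem3.10}, \ref{lem3.14} and \ref{lem3.15} show that every 4-split of $\Gamma_2$, of $\Gamma_3$, of a hub of $DW_5$, of $K_6$ and of $K_6\backslash e$ contains a $K_{3,3}+v$-minor, hence leaves $\mathcal{T}$, so the induction as you state it cannot be carried out. The invariant that actually closes the induction is ``$H$ is in the list or $H$ contains a $K_{3,3}+v$-minor,'' together with the observation that contracting the new edge of a split recovers $H$, so a $K_{3,3}+v$-minor persists under all further splits; this is how the paper organizes Lemmas~\ref{lem3.1}--\ref{lem3.15}. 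A smaller but telling slip: splitting a hub of $DW_n$ does not produce $DW_{n+1}$ --- by Lemmas~\ref{lem3.10}--\ref{lem3.11} every such split has a $K_{3,3}+v$-minor; $DW_{n+1}$ arises from splitting a rim vertex of degree 4 (Lemmas~\ref{lem3.12}--\ref{lem3.13}).
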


	\begin{lem}[\cite{P7}]\label{lem2.1}
	The only 4-split of $C_{5}^{2}$ are $K_6$, $K_6$$\backslash$$e$, $DW_4$.
\end{lem}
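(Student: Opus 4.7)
The key observation is that $C_5^2$ is simply $K_5$: in a 5-cycle every vertex is at distance 1 or 2 from every other vertex, so $C_5^2$ has all $\binom{5}{2}=10$ edges. Thus the task reduces to classifying all 4-splits of $K_5$. Since $K_5$ is vertex-transitive, I may fix an arbitrary vertex $v$; its neighborhood $N(v)$ has exactly four vertices, say $N(v)=\{1,2,3,4\}$, and any 4-split is determined (up to labeling the two new vertices $a,b$) by a choice of subsets $A,B\subseteq\{1,2,3,4\}$ with $A\cup B=\{1,2,3,4\}$ and $\min\{|A|,|B|\}\ge 3$.

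I would enumerate the possibilities for $(|A|,|B|)$. Since $|A\cup B|=4$ forces $|A\cap B|=|A|+|B|-4$, the only cases are $(4,4)$, $(3,4)$ (equivalent to $(4,3)$ by swapping $a$ and $b$), and $(3,3)$. In each case I would write down the resulting graph explicitly and identify it. In the $(4,4)$ case both $a$ and $b$ see all of $\{1,2,3,4\}$ and each other, which together with the complete adjacencies among $\{1,2,3,4\}$ inherited from $K_5$ yields $K_6$. In the $(3,4)$ case, say $A=\{1,2,3\}$, vertex $a$ fails to see exactly the vertex $4$, and the remainder of the graph is complete, so we obtain $K_6\backslash e$. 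In the $(3,3)$ case we must have $|A\cap B|=2$, so up to symmetry $A=\{1,2,3\}$ and $B=\{1,2,4\}$; then vertices $1$ and $2$ remain adjacent to everything (degree $5$), while the four vertices $3,4,a,b$ each have degree $4$ and form the 4-cycle $3$–$4$–$b$–$a$–$3$ with $1,2$ adjacent to all of them and to each other — this is precisely $DW_4$.

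The verification is mostly routine bookkeeping; the only point needing care is that in the $(3,3)$ case the two size-3 subsets cannot be disjoint (they must cover $N(v)$), and the choice of which pair of common vertices lies in $A\cap B$ is irrelevant by the symmetry of $K_5$, so a single representative suffices. I do not anticipate any real obstacle: once the translation $C_5^2=K_5$ is in place, the classification is a short finite check, and recognizing the resulting 6-vertex graphs from their degree sequences and adjacency patterns is immediate.
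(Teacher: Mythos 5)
Your argument is correct and complete. The key simplification $C_5^2=K_5$ is right (in a $5$-cycle every pair of vertices is at distance $1$ or $2$, so squaring gives all $\binom{5}{2}=10$ edges), vertex-transitivity lets you fix the split vertex, and your case analysis on $(|A|,|B|)\in\{(4,4),(3,4),(3,3)\}$ with $|A\cap B|=|A|+|B|-4$ is exhaustive; the identifications of the three outcomes as $K_6$, $K_6\backslash e$ and $DW_4$ (hubs $1,2$ of degree $5$, rim $4$-cycle $3$--$4$--$b$--$a$--$3$) all check out, and the symmetry remark disposing of the choice of $A\cap B$ is the right way to see that a single representative suffices. Note, however, that the paper itself offers no proof of this lemma: it is quoted as a known result from the cited paper of Ding, Lewchalermvongs and Maharry on $\bar P_7$-minor-free graphs. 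So your contribution is a short, self-contained, elementary verification where the paper relies on a citation; that is a perfectly legitimate and arguably preferable route here, since the whole classification is a finite check once one observes $C_5^2\cong K_5$, whereas the citation buys brevity and consistency with the source in which the 4-split machinery is developed.
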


Thus, we next characterize the $K_{3,3}+v$-minor-free graph obtained from $K_6$, $K_6$$\backslash$$e$, $DW_4$ by repeatedly 4-splitting vertices.

		\section{Proof of Theorem \ref{thm1.2}}
		In this section, we proof the following results from which the Theorem 1.1 follows. 
		
		\begin{lem}\label{lem3.1}
		The graph obtained by adding an edge to $\Gamma_1$ contains $ K_{3,3}+v $ as a minor.
		\end{lem}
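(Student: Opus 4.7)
The plan is to exploit the symmetry of $\Gamma_1$ to keep the case analysis short, and then for each orbit of non-adjacent vertex pairs to write down an explicit branch-set decomposition witnessing the $K_{3,3}+v$-minor in $\Gamma_1+e$.

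First I would fix a labelling of $V(\Gamma_1)$ from the figure, compute the automorphism group $\mathrm{Aut}(\Gamma_1)$, and enumerate the orbits of unordered pairs of non-adjacent vertices. Since $\Gamma_1$ is a sporadic highly symmetric example appearing in Theorem \ref{thm1.2}, I expect only a small handful of such orbits, each represented by a single non-edge $e=xy$ to be analysed. The rest of the argument is then just a case-by-case verification over these representatives.

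Next, for each representative $e$, I would produce a model of $K_{3,3}+v$ in $\Gamma_1+e$ by partitioning $V(\Gamma_1)$ into seven pairwise disjoint sets $B_1,\dots,B_7$, each inducing a connected subgraph, such that contracting each $B_i$ yields a graph on $\{B_1,\dots,B_7\}$ containing $K_{3,3}+v$ as a subgraph. Concretely, I will designate one branch set $B_7$ as the apex $v$, four branch sets $B_1,\dots,B_4$ as the $4$-cycle of $K_{3,3}$ to which $v$ attaches, and the remaining two branch sets $B_5,B_6$ as the opposite side of the bipartition. The checklist of adjacencies to realise between the $B_i$ is exactly the $13$ edges of $K_{3,3}+v$: the nine edges of the $K_{3,3}$ plus the four edges from $v$ to the $4$-cycle.

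The main obstacle, and the place where the added edge $e$ has to do real work, is to arrange the partition so that $e$ supplies precisely the adjacency between two branch sets that is otherwise missing in $\Gamma_1$. If $e$ were contained inside a single branch set it would be wasted, so for every orbit I will choose the $B_i$ so that the endpoints $x$ and $y$ of $e$ lie in different branch sets. Once the partition is written down, verification of the $13$ required cross-adjacencies is a routine edge-by-edge inspection in $\Gamma_1+e$, and I expect the proof to reduce to a short table: for each orbit of non-edges, list the seven branch sets and then read off the $13$ edges of $K_{3,3}+v$ among them.
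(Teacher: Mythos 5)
Your plan is essentially the paper's own proof: the paper also uses the symmetry of $\Gamma_1$ to cut the non-edges down to three representatives (adding $av_5$, $av_4$ or $v_1v_4$) and then verifies a $K_{3,3}+v$-minor in each augmented graph by an explicit model. One small simplification you could note is that $\Gamma_1+e$ and $K_{3,3}+v$ both have seven vertices, so your seven branch sets are forced to be singletons and each check is just a spanning-subgraph verification; otherwise the approach matches the paper's.
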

	\begin{proof}
		By symmetry, we discuss three cases. The graph obtained by adding an edge $av_5$, $av_4$ or $v_1v_4$ to $\Gamma_1$ contains $ K_{3,3}+v $ as a minor (see Figure \ref{fig:04}).
	\end{proof}

\begin{figure}[!htb]
	\centering
	\includegraphics[width=0.7\linewidth]{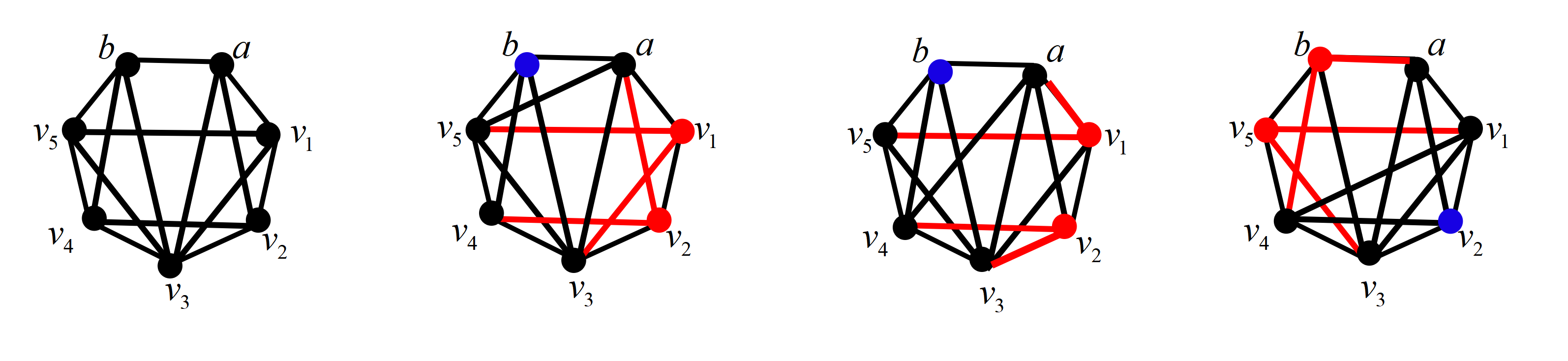}
	\caption{$\Gamma_1 $ and $\Gamma_1 + e$}
	\label{fig:04}
\end{figure}
	
	\begin{lem}\label{lem3.2}
The graph $\Gamma_1$ is the only $K_{3,3}+v$-minor-free graph obtained by 4-splitting a vertex of degree 5 in $DW_4$. 
\end{lem}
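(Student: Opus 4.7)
I would first fix notation for $DW_4$: let $u_1,u_2$ be the two hubs (the only pair of degree-$5$ vertices, joined by an edge), and let the rim be the $4$-cycle $v_1v_2v_3v_4v_1$, with each $v_i$ adjacent to both hubs. Since $u_1$ and $u_2$ are exchanged by an automorphism of $DW_4$, it suffices to analyze the $4$-splits of $u_1$. A $4$-split is encoded by an unordered pair $\{A,B\}$ of subsets of $N(u_1)=\{u_2,v_1,v_2,v_3,v_4\}$ with $A\cup B=N(u_1)$ and $|A|,|B|\ge 3$, and since $|N(u_1)|=5$ this forces $|A\cap B|=|A|+|B|-5\ge 1$.

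The stabilizer of $u_1$ in $\mathrm{Aut}(DW_4)$ fixes $u_2$ and acts on $\{v_1,v_2,v_3,v_4\}$ as the dihedral group $D_4$. Taking orbits of partitions under this action together with the swap $A\leftrightarrow B$, the possibilities reduce to a short list indexed by the pair $(|A|,|B|)$, by whether $u_2$ lies in $A\cap B$ or in $A\triangle B$, and by the cyclic pattern of rim vertices in $A$ versus $B$. I would write this list of representatives down once and for all.

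For each representative the resulting split graph $G'$ has exactly $7$ vertices, matching $|V(K_{3,3}+v)|$. Hence no contraction or vertex-deletion can occur in a minor embedding, and ``$K_{3,3}+v$ is a minor of $G'$'' is equivalent to ``$K_{3,3}+v$ is a spanning subgraph of $G'$''. For every representative except one I plan to exhibit such a spanning copy explicitly, by mapping the two colour classes of the $K_{3,3}$ and the apex $v$ onto the seven vertices $\{a,b,u_2,v_1,v_2,v_3,v_4\}$ and listing the required $13$ edges among the $9+|A|+|B|\ge 15$ available. For the one remaining case I verify directly that $G'\cong\Gamma_1$ by matching degree sequences and adjacencies to Figure~\ref{fig:04}; that $\Gamma_1$ itself is $K_{3,3}+v$-minor-free is part of Theorem~\ref{thm1.2} and can be checked by hand on this small graph.

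The hard part is purely combinatorial bookkeeping: the case list is not long, but one must be disciplined to avoid double-counting orbits or overlooking a partition, and to produce, for each non-exceptional case, an explicit spanning $K_{3,3}+v$. A subtle point is that the splits whose overlap $A\cap B$ contains $u_2$ behave very differently from those whose overlap consists of rim vertices, so these two regimes must be treated separately; getting the right witness for the $(3,3)$-type splits with rim-only overlap is the place most likely to require careful thought.
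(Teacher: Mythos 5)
Your plan is correct in substance and shares the paper's basic strategy (exhaust all 4-splits of a hub up to symmetry and certify a $K_{3,3}+v$-minor in every case except the one giving $\Gamma_1$), but it organizes the work differently in two respects. First, the paper only examines the \emph{minimal} splits $|A|=|B|=3$ (ten labelled graphs, several of which are isomorphic, with $\Gamma_1$ appearing twice because the case split is not by orbits) and then dismisses all non-minimal splits at once by observing that they are obtained from minimal ones by adding edges $av_i$ or $bv_j$, invoking monotonicity of minors together with Lemma~\ref{lem3.1} to handle the supergraphs of $\Gamma_1$; you instead enumerate \emph{all} splits, indexed by $(|A|,|B|)$ and the orbit data, and certify each representative directly, which is more cases but avoids any reliance on Lemma~\ref{lem3.1}. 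Second, your observation that the split graph has exactly $7$ vertices, so that a $K_{3,3}+v$-minor is the same as a spanning subgraph, is a genuine improvement in rigor over the paper's figure-based ``it is easy to check'' verifications, and it also makes the minor-freeness of $\Gamma_1$ itself a finite subgraph check --- which is how you should do it, since quoting Theorem~\ref{thm1.2} for that fact would be circular (the paper, for its part, simply asserts it from Figure~\ref{fig:04}). Two small cautions: after proper orbit reduction there are only four isomorphism types of minimal splits (hub in $A\cap B$ with the rim split into two adjacent or two antipodal pairs, and a rim vertex in $A\cap B$ with the hub-side rim partner adjacent or antipodal to it), exactly one of which is $\Gamma_1$, so your ``except one'' count is right only if this reduction is carried out carefully; and since every non-minimal split has at least $16$ edges while $\Gamma_1$ has $15$, no non-minimal representative can be exceptional, which is worth stating to justify that the exceptional case occurs only at $(|A|,|B|)=(3,3)$.
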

\begin{proof}
	Let \{$v_1,v_2,\ldots,v_6$\} be vertices of $DW_4$ (shown in Figure \ref{fig:02}). We first consider the minimal case for $|A|=|B|=3$, where $A, B$ are the subsets of the $N_G(v_6)$, $A \cup B$=$N_G(v_6)$. Suppose that $a, b$ are the new vertices obtained by 4-splitting $v_6$. Since the degree of $v_6$ is 5, we distinguish the discussion by the possibilities for $A$ $\cap$ \{$v_1, v_5$\} and $B$ $\cap$ \{$v_1, v_5$\}. The first case is that one of $A$ $\cap$ \{$v_1, v_5$\} and $B$ $\cap$ \{$v_1, v_5$\} is $\emptyset$. Without loss of generality, we assume that $B$ $\cap$ \{$v_1, v_5$\} is $\emptyset$. The second case is that $A$ $\cap$ \{$v_1, v_5$\} $\neq$ $\emptyset$ and $B$ $\cap$ \{$v_1, v_5$\} $ \neq $ $\emptyset$.
\begin{figure}[!htb]
	\centering
	\includegraphics[width=0.2\linewidth]{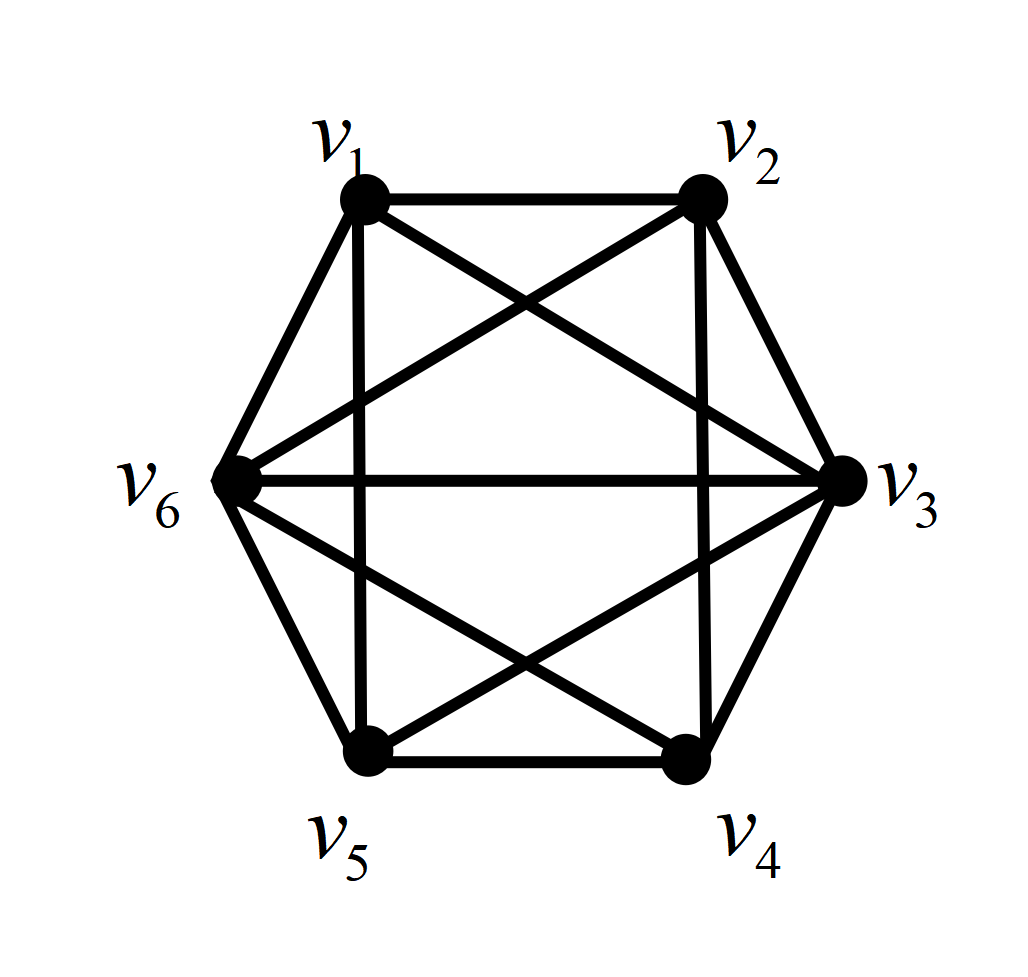}
	\caption{$DW_4$}
	\label{fig:02}
\end{figure}

	 {\bf Case 1.} $A$ $\cap$ $\{v_1, v_5\}$ $\neq$ $\emptyset$, $B$ $\cap$ $\{v_1, v_5\}$ $=$ $\emptyset$.
	
	 In this case, $B$ has at least three vertices. Thus, $B$=\{$v_2$, $v_3$, $v_4$\} and the set $A$ may be \{$v_1, v_5, v_2$\}, \{$v_1, v_5, v_3$\} or \{$v_1, v_5, v_4$\}. We list these three cases in Figure \ref{fig:03} respectively. It is not difficult to see that the second graph is a $ K_{3,3}+v $-minor-free graph which is isomorphic to $\Gamma_1$ and the other two graphs both contain $ K_{3,3}+v $ as a minor.
	 
\begin{figure}[!htb]
	\centering
	\includegraphics[width=0.5\linewidth]{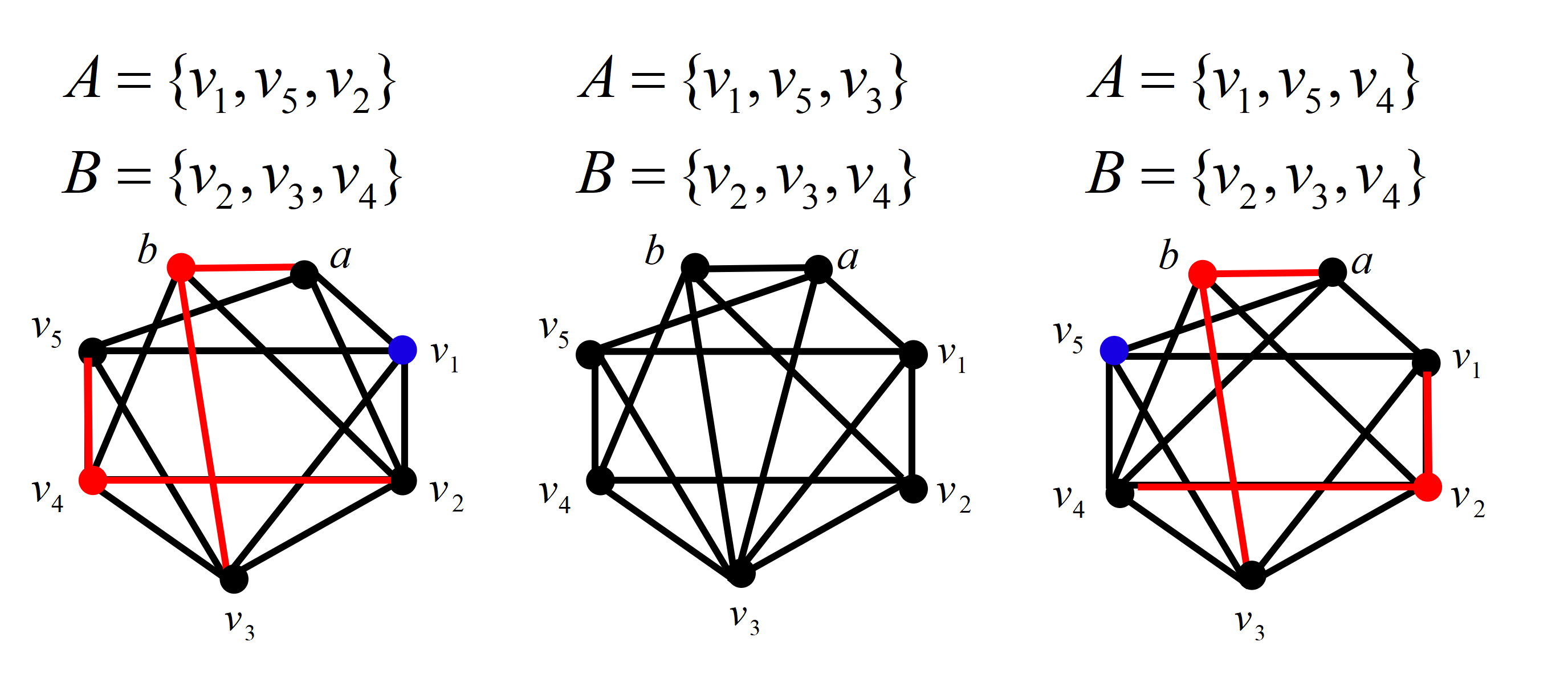}
	\caption{Three graphs in Case 1}
	\label{fig:03}
\end{figure}

  {\bf Case 2.}  $A$ $\cap$ $\{v_1, v_5\}$ $\neq$ $\emptyset$, $B$ $\cap$ $\{v_1, v_5\}$ $\neq$ $\emptyset$.
  
In this case, we assume that $v_1$ $\in$ $A$, $v_5$ $\in$ $B$ without loss of generality. Since $|N_G(v_6)|$ = 5, $A$ $\cap$ $B$ $\neq$ $\emptyset$ in the minimal case for $|A|=|B|=3$. By the symmertry of the graph $G$$\backslash$$v_6$, we distinguish the following three cases.
	 
 {\bf Case 2.1.}  $A \cap B $ = \{$v_2$\}.
	  
	 Since $v_1$$\in$$A$ and $v_2$$\in$$A$, one of $v_3$ and $v_4$ must belong to $A$. If $v_3$ $\in$ $A$, then $v_4$ must belong to $B$ in the minimal case. If $v_4$ $\in$ $A$, then $v_3$ must belong to $B$ in the minimal case. It is easy to check that both the two resulting graphs contain $ K_{3,3}+v $ as a minor (see Figure \ref{fig:05}).
	 
	  \begin{figure}[!htb]
	  	\centering
	  	\includegraphics[width=0.4\linewidth]{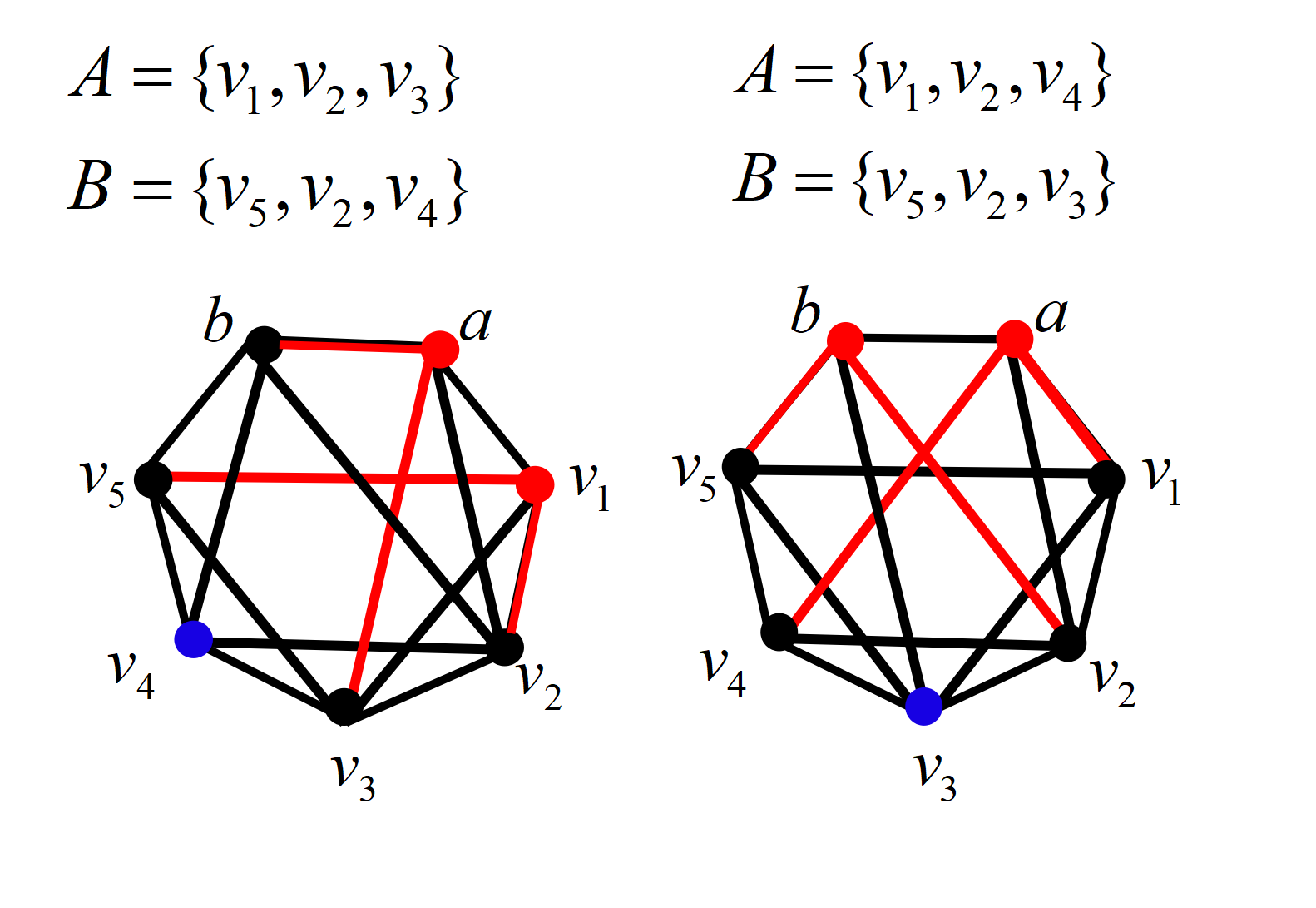}
	  	\caption{Two graphs of Case 2.1}
	  	\label{fig:05}
	  \end{figure}
  
	  {\bf Case 2.2.}  $A \cap B $ = \{$v_5$\}.
	  
	  	 Since $v_1$$\in$$A$, $v_5$$\in$$A$, one of $v_2$, $v_3$ and $v_4$ must belong to $A$. First, if $v_2$ $\in$ $A$, then $v_3$ and $v_4$ must belong to $B$ in the minimal case. Second, if $v_3$ $\in$ $A$, then $v_2$ and $v_4$ must belong to $B$ in the minimal case. Third, if $v_4$ $\in$ $A$, then $v_2$ and $v_3$ must belong to $B$ in the minimal case. It is straightforward to verify that all the three resulting graphs contain $ K_{3,3}+v $ as a minor (see Figure \ref{fig:06}).
	  	 
	  \begin{figure}[!htb]
	  	\centering
	  	\includegraphics[width=0.5\linewidth]{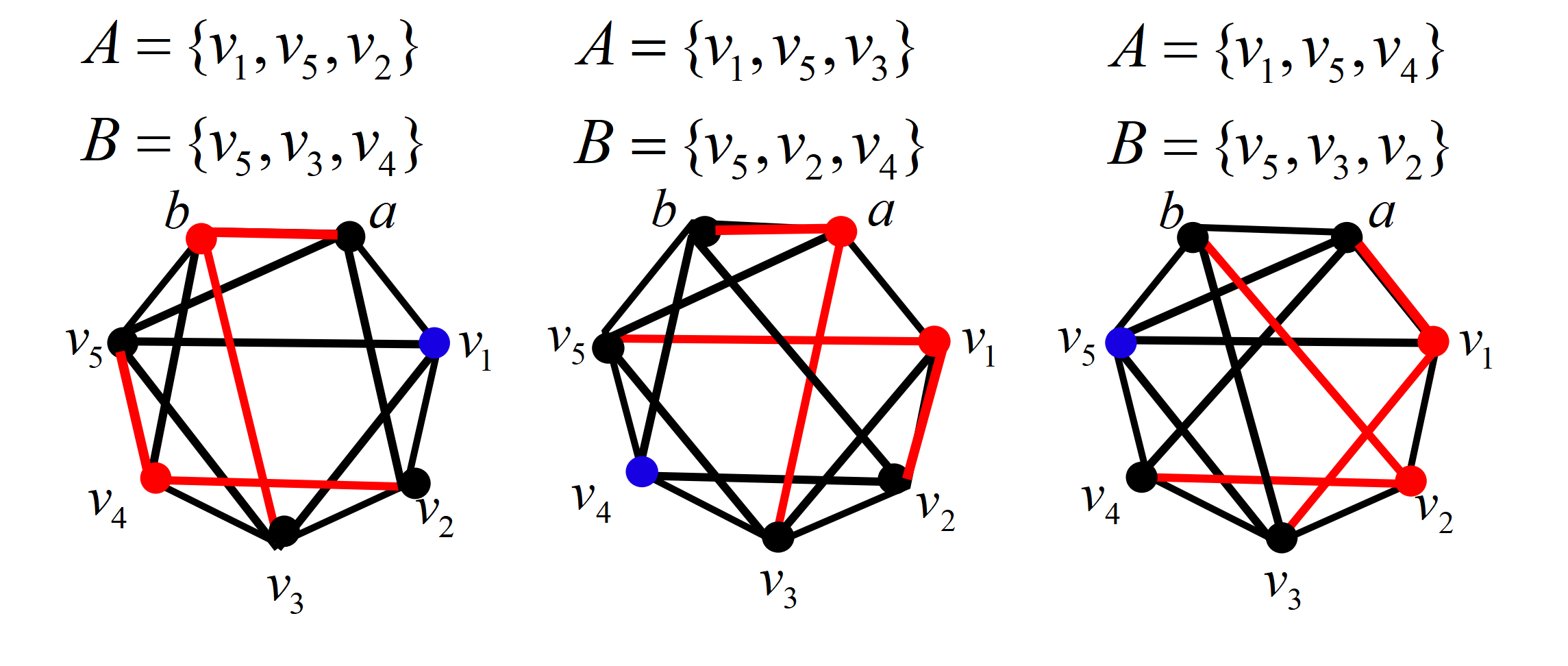}
	  	\caption{Three graphs of Case 2.2}
	  	\label{fig:06}
	  \end{figure}

	   {\bf Case 2.3.}  $A \cap B $ = \{$v_3$\}.
	   
	    The process of discussing the sets of A and B is the same as the Case 2.1. Hence we can obtain two graphs as shown in Figure \ref{fig:07}. It is clear that the first graph is $\Gamma_1$ which is a $ K_{3,3}+v $-minor-free graph and the second graph contains $K_{3,3}+v$ as a minor.

	\begin{figure}[!htb]
		\centering
		\includegraphics[width=0.4\linewidth]{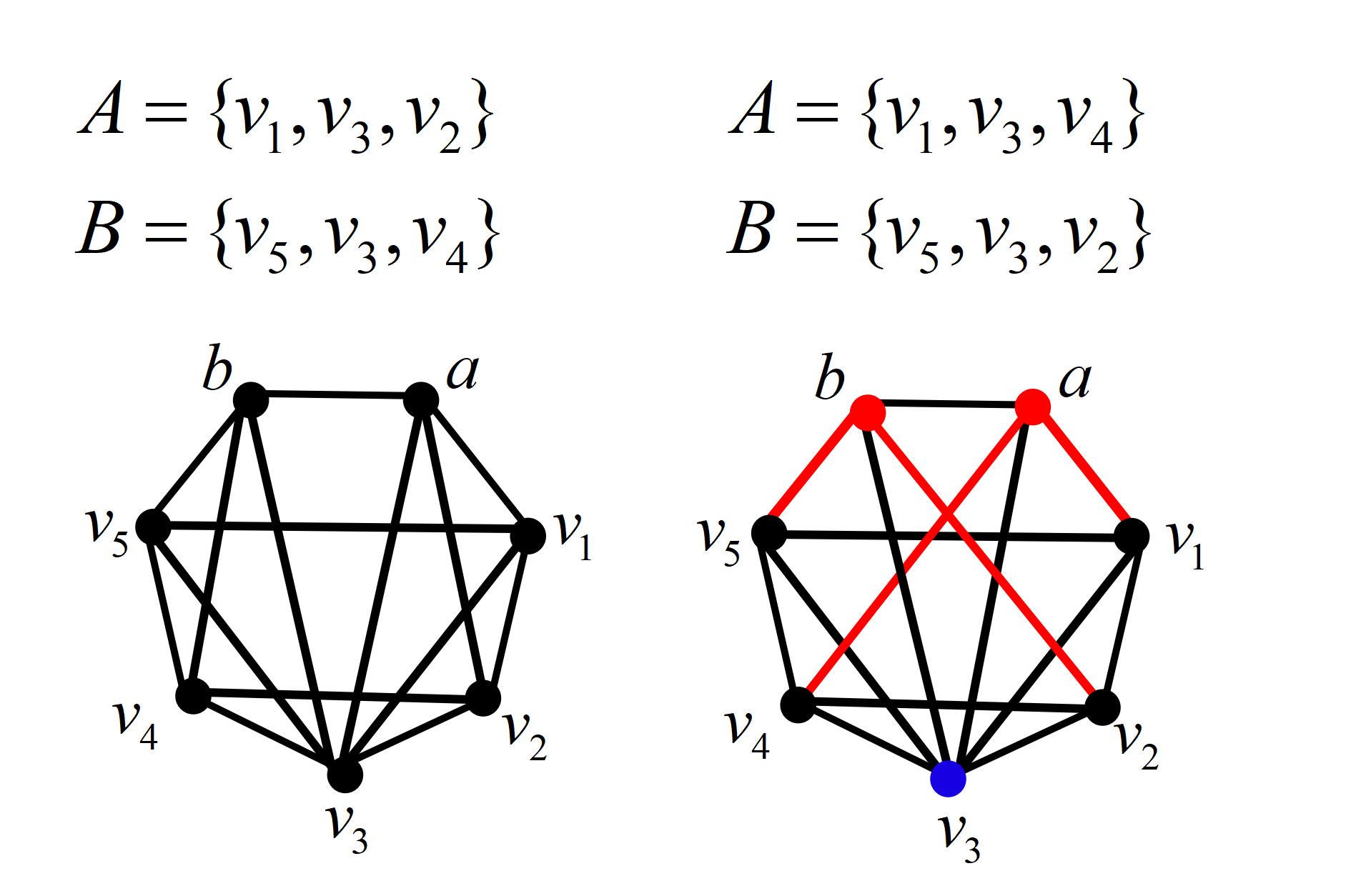}
		\caption{Two graphs of Case 2.3}
		\label{fig:07}
	\end{figure}
	
	Besides $\Gamma_1$, the new graph $G^{'}$ obtained by 4-splitting the vertex $v_6$ from $DW_4$ with $|A|$ = $|B|$ = 3 contains $ K_{3,3}+v $ as a minor. The graph obtained in non-minimal cases is equivalent to add new edges $av_i$ or $bv_j$ to the above ten graphs mentioned in Figure \ref{fig:03}--\ref{fig:07}. Then by Lemma \ref{lem3.1} and the result of the above ten graphs, all the other new graphs obtained by 4-splitting $v_6$ from $DW_4$ contain $ K_{3,3}+v $ as a minor.	
	
\end{proof}

	\begin{lem}\label{lem3.3}
The graph obtained by 4-splitting a vertex of degree 4 in $\Gamma_1$ contains $ K_{3,3}+v $ as a minor. 
	\end{lem}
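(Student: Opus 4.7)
The approach is to mimic the case analysis of Lemma~\ref{lem3.2}. I would first examine $\mathrm{Aut}(\Gamma_1)$ to partition the five degree-$4$ vertices of $\Gamma_1$ into orbits; from the two ways $\Gamma_1$ arises in Cases~1 and~2.3 of Lemma~\ref{lem3.2}, the natural candidates are $\{v_1,b\}$ and $\{v_2,v_4,a\}$, so it suffices to treat one representative $w$ from each orbit.

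For each such representative $w$ I would first handle the minimal $4$-splits, namely those with $|A|=|B|=3$ and $|A\cap B|=2$. Using the stabilizer of $w$ in $\mathrm{Aut}(\Gamma_1)$ to cut the $(A,B)$-pairs down to essentially distinct choices, a short list of splits remains; for each one I would exhibit a $K_{3,3}+v$-minor explicitly by specifying branch sets, in the style in which the minors of Lemma~\ref{lem3.1} are depicted in Figure~\ref{fig:04}. The useful starting observation is that $\Gamma_1$ already contains $K_{3,3}$ as a subgraph --- for instance on the bipartition $\{v_1,v_3,v_5\}\cup\{v_2,v_4,a\}$ --- so after replacing $w$ by new vertices $x,y$ one can typically keep this $K_{3,3}$ intact (with $\{x,y\}$ taking over the role of the split endpoint if $w$ is itself one of the six $K_{3,3}$-vertices) and then a suitably chosen remaining vertex will be adjacent to four of the $K_{3,3}$ branch sets forming a $4$-cycle, providing the apex of $K_{3,3}+v$.

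The non-minimal $4$-splits, where $|A|+|B|>6$, produce edge-supergraphs of some minimal split, so the $K_{3,3}+v$-minor survives automatically; unlike in Lemma~\ref{lem3.2}, no extra appeal to Lemma~\ref{lem3.1} is needed, because none of the minimal-case graphs here is itself $K_{3,3}+v$-minor-free.

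The main obstacle will be the case enumeration in the minimal setting: after the symmetry reduction I expect around half a dozen distinct minimal splits to verify, each needing an individual branch-set argument (and possibly an auxiliary edge contraction) to realize the $4$-cycle-attached apex vertex. The underlying principle is however uniform: the pre-existing $K_{3,3}$ persists through the split, and the extra structure introduced by the split edge $xy$ together with the distribution of $w$'s neighbors between $x$ and $y$ supplies exactly the adjacency required to distinguish $K_{3,3}+v$ from $K_{3,3}$.
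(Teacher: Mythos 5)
Your outer skeleton (reduce the degree-$4$ vertices by symmetry, handle the minimal splits $|A|=|B|=3$, $|A\cap B|=2$, and dispose of non-minimal splits as edge-supergraphs of minimal ones) is exactly the paper's strategy, but two of your concrete claims are wrong, and the second one destroys the mechanism by which you intended to actually produce the minors. First, $\Gamma_1$ has \emph{six} vertices of degree $4$, not five: from the neighbourhoods used in the paper ($N(v_1)=\{v_2,v_3,v_4,v_7\}$, $N(v_2)=\{v_1,v_3,v_4,v_6\}$, $N(v_3)=\{v_1,v_2,v_4,v_5\}$, $N(v_4)=V\setminus\{v_4\}$) one reconstructs $\Gamma_1$ as a triangular prism plus a dominating apex vertex $v_4$, so the six prism vertices all have degree $4$ (and in fact form a single orbit of $\mathrm{Aut}(\Gamma_1)$, which is a stronger reduction than the paper's three representatives). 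Your proposed orbit partition $\{v_1,b\}\cup\{v_2,v_4,a\}$ covers only five vertices and is not the orbit structure; as written, a degree-$4$ vertex is simply omitted from the case analysis, and nothing in the proposal justifies the claimed orbits beyond an analogy with Lemma \ref{lem3.2}.

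Second, and more seriously, your ``useful starting observation'' is false: $\Gamma_1$ contains no $K_{3,3}$ \emph{subgraph} at all. If such a subgraph avoided the apex $v_4$ it would be a subgraph of the prism, which has the same number of edges as $K_{3,3}$ but is not bipartite; if the apex were one of the six branch vertices, the two remaining vertices on its side would be prism vertices with identical prism-neighbourhoods, and no two prism vertices share a neighbourhood. In particular your bipartition $\{v_1,v_3,v_5\}\cup\{v_2,v_4,a\}$ misses a cross edge, so the plan ``keep the pre-existing $K_{3,3}$ intact through the split and attach an apex'' never gets started. Any $K_{3,3}$ in $\Gamma_1$ (and in its splits) exists only as a minor, obtained after genuine contractions, so exhibiting a $K_{3,3}+v$-minor in each of the roughly twenty minimal splits requires the individual branch-set/contraction verifications that the paper records in Figures \ref{fig:08}--\ref{fig:10}; your proposal defers exactly this step, which is the entire content of the lemma, and offers in its place a uniform mechanism that does not apply to $\Gamma_1$. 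The non-minimal reduction and the remark that no appeal to Lemma \ref{lem3.1} is needed are fine, but the proof as proposed has a gap at its core.
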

	
	\begin{proof}
		There are six vertices of degree 4 in $\Gamma_1$. Up to symmetry, we consider three cases. We first consider the minimal case for $|A|=|B|=3$, where $A, B$ are the subsets of the $N_G(v)$, $A \cup B$=$N_G(v)$. Let \{$v_1,v_2,\ldots,v_7$\} be vertices of $\Gamma_1$ (shown in Figure \ref{fig:08}).
		
		{\bf Case 1.}  Split the vertex $v_1$.
		
		Since $|N_G(v_1)|$ = 4, $A$ $\cap$ $\{v_2, v_7\}$ $\neq$ $\emptyset$ and $B$ $\cap$ $\{v_2, v_7\}$ $\neq$ $\emptyset$ in the minimal case for $|A|=|B|=3$. Without loss of generality, we assume that $v_2$ $\in$ $A$, $v_7$ $\in$ $B$. The set $A$ may  be \{$v_2, v_3, v_4$\}, \{$v_2, v_3, v_7$\} or \{$v_2, v_4, v_7$\}. The set $B$ may be \{$v_2, v_3, v_7$\}, \{$v_2, v_4, v_7$\} or \{$v_3, v_4, v_7$\}. On the one hand, if the set $A$ is \{$v_2, v_3, v_4$\}, then the set $B$ may be \{$v_2, v_3, v_7$\}, \{$v_2, v_4, v_7$\} or \{$v_3, v_4, v_7$\}. On the other hand, if the set $A$ is \{$v_2, v_3, v_7$\}, then the set $B$ may be \{$v_2, v_4, v_7$\} or \{$v_3, v_4, v_7$\}. At last if the set $A$ is \{$v_2, v_4, v_7$\}, then the set $B$ may be \{$v_2, v_3, v_7$\} or \{$v_3, v_4, v_7$\}. Hence we have seven graphs. It is evident that all the seven resulting graphs contain $ K_{3,3}+v $ as a minor (see Figure \ref{fig:08}).
		
\begin{figure}[!htb]
	\centering
	\includegraphics[width=0.7\linewidth]{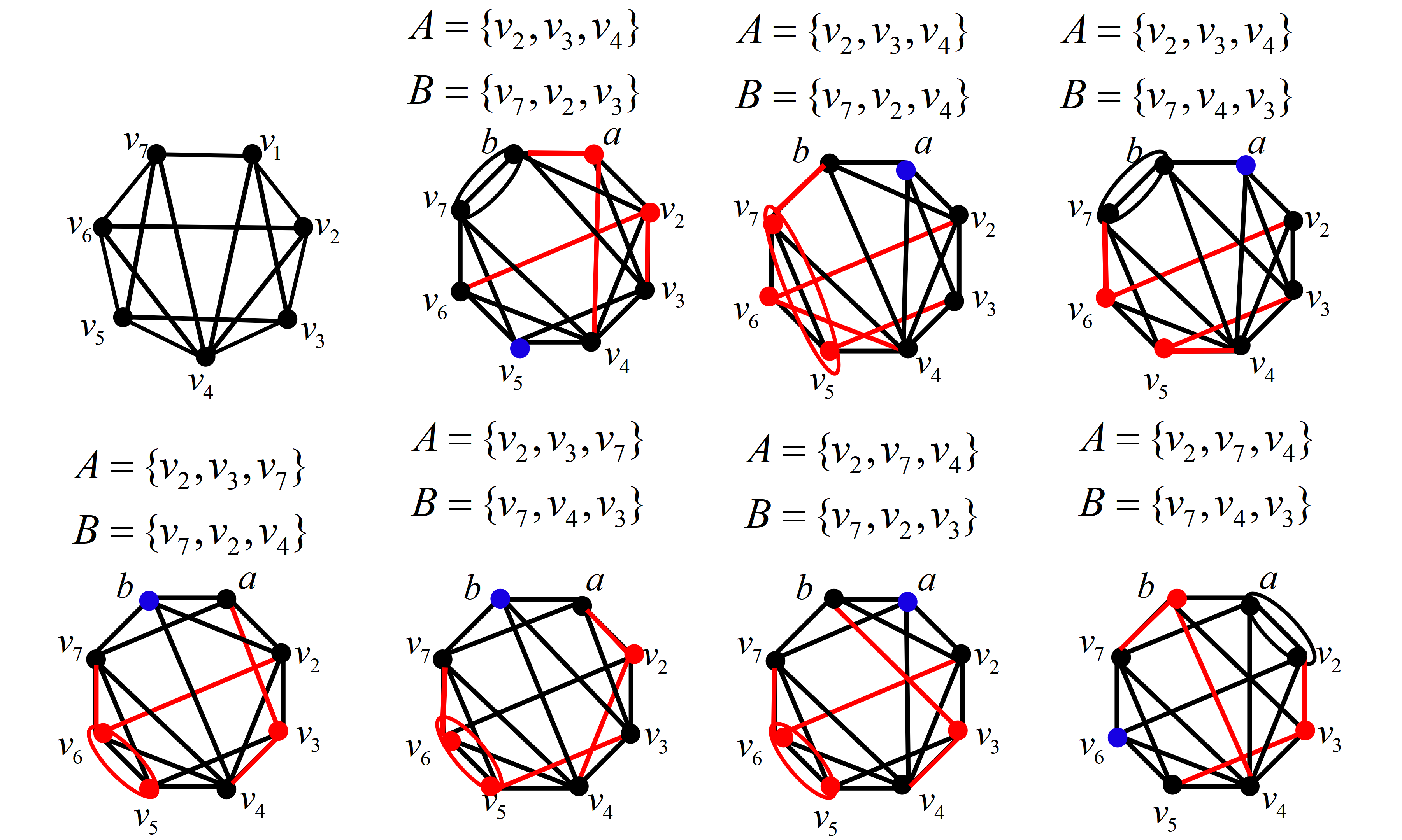}
	\caption{$\Gamma_1$ and seven graphs obtained by minimal 4-splitting $v_1$}
	\label{fig:08}
\end{figure}

	{\bf Case 2.}  Split the vertex $v_2$.
	
		Since $|N_G(v_2)|$ = 4, $A$ $\cap$ $\{v_1, v_3\}$ $\neq$ $\emptyset$ and $B$ $\cap$ $\{v_1, v_3\}$ $\neq$ $\emptyset$ in the minimal case for $|A|=|B|=3$. Without loss of generality, we assume that $v_3$ $\in$ $A$, $v_1$ $\in$ $B$. The set $A$ may  be \{$v_1, v_3, v_4$\}, \{$v_1, v_3, v_6$\} or \{$v_3, v_4, v_6$\}. The set $B$ may be \{$v_1, v_3, v_4$\}, \{$v_1, v_3, v_6$\} or \{$v_1, v_4, v_6$\}. Similarly to the previous case, we have seven graphs. It is easy to check that all the seven resulting graphs contain $ K_{3,3}+v $ as a minor (see Figure \ref{fig:09}).
	
\begin{figure}[!htb]
	\centering
	\includegraphics[width=0.7\linewidth]{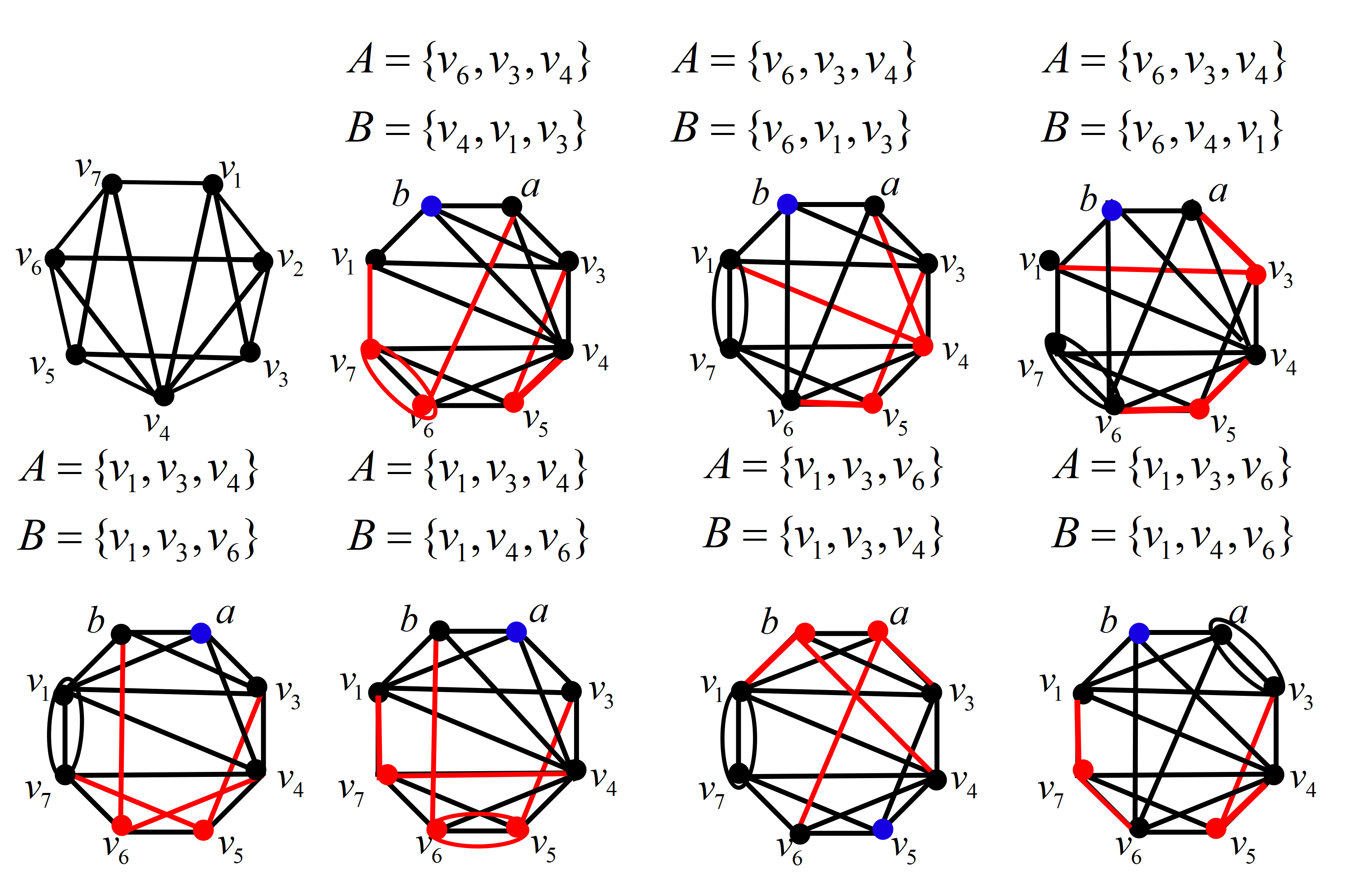}
	\caption{$\Gamma_1$ and seven graphs obtained by minimal 4-splitting $v_2$}
	\label{fig:09}
\end{figure}

		{\bf Case 3.}  Split the vertex $v_3$.
		
		Since $|N_G(v_3)|$ = 4, $A$ $\cap$ $\{v_2, v_4\}$ $\neq$ $\emptyset$ and $B$ $\cap$ $\{v_2, v_4\}$ $\neq$ $\emptyset$ in the minimal case for $|A|=|B|=3$. Without loss of generality, we assume that $v_4$ $\in$ $A$, $v_2$ $\in$ $B$. The set $A$ may  be \{$v_1, v_4, v_5$\}, \{$v_2, v_4, v_5$\} or \{$v_1, v_2, v_4$\}. The set $B$ may be \{$v_2, v_4, v_5$\}, \{$v_1, v_2, v_4$\} or \{$v_1, v_2, v_5$\}. By an argument similar to that of Case 1, we have seven graphs. It is clear that all the seven resulting graphs contain $ K_{3,3}+v $ as a minor (see Figure \ref{fig:10}).
		
\begin{figure}[!htb]
	\centering
	\includegraphics[width=0.7\linewidth]{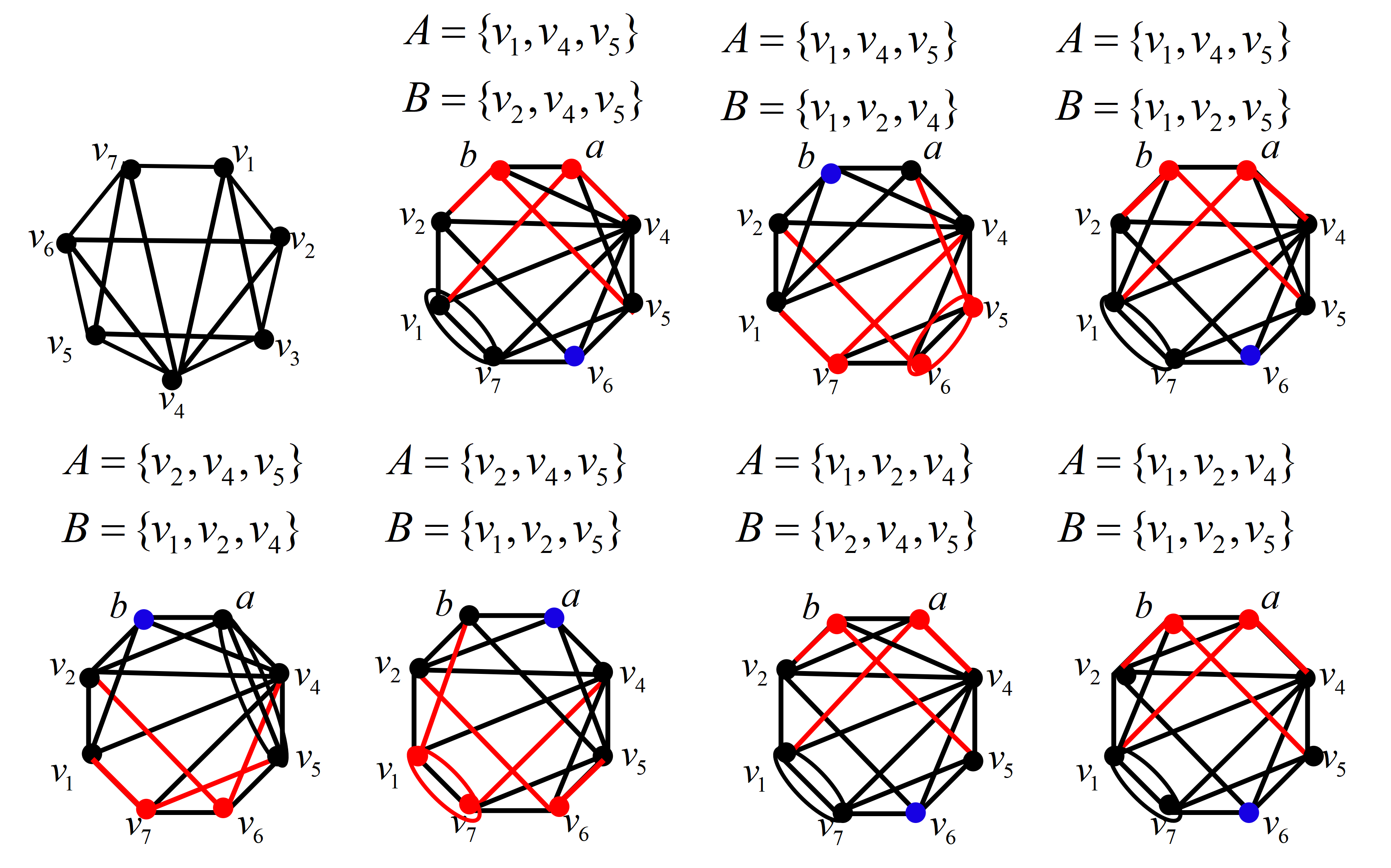}
	\caption{$\Gamma_1$ and seven graphs obtained by minimal 4-splitting $v_3$}
	\label{fig:10}
\end{figure}

		The graph obtained in non-minimal cases is equivalent to add new edges $av_i$ or $bv_j$ to the above  graphs mentioned in Figures \ref{fig:08}--\ref{fig:10}. Hence, the graph obtained by 4-splitting a vertex of degree 4 in $\Gamma_1$ contains $K_{3,3}+v$ as a minor.
		
		\end{proof}
	
		\begin{lem}\label{lem3.4}
		 The graph obtained by adding a new edge to $\Gamma_3$ contains $ K_{3,3}+v $ as a minor.
		
	\end{lem}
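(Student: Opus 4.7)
The plan is to mimic the structure used for Lemma \ref{lem3.1}, which handled $\Gamma_1+e$ by a case analysis on the orbits of non-edges under the automorphism group. So the first step is to write down $\Gamma_3$ explicitly, label its vertices, and determine its automorphism group. Once that is done, the non-edges of $\Gamma_3$ split into a small number of equivalence classes under $\mathrm{Aut}(\Gamma_3)$, and we need only exhibit, for one representative edge $e$ from each class, a $K_{3,3}+v$ minor in $\Gamma_3+e$.

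Next, for each representative $e$ I would produce the minor by specifying the seven branch sets corresponding to the vertices of $K_{3,3}+v$: three branch sets $X_1,X_2,X_3$ on one side of the $K_{3,3}$, three branch sets $Y_1,Y_2,Y_3$ on the other side (each $X_i$--$Y_j$ pair joined by an edge in $\Gamma_3+e$), and one extra branch set $Z$ whose four neighbours among the $X_i,Y_j$ form a 4-cycle in $K_{3,3}$. Each branch set must induce a connected subgraph of $\Gamma_3+e$, and the branch sets must be pairwise disjoint. The added edge $e$ should be used in supplying exactly the adjacency that $\Gamma_3$ itself is missing; this is what distinguishes $\Gamma_3+e$ from $\Gamma_3$ (which, by Theorem \ref{thm1.2}, is $K_{3,3}+v$-minor-free).

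As in Lemmas \ref{lem3.1} and \ref{lem3.2}, these branch sets are most transparently displayed through a figure: one picture per orbit representative, with the contracted sets highlighted. I would collect them into a single figure (analogous to Figure \ref{fig:04}) to keep the write-up short.

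The main obstacle will be the case analysis itself: determining $\mathrm{Aut}(\Gamma_3)$ correctly and, for each non-edge orbit, finding a set of branch sets that actually produces $K_{3,3}+v$ rather than just $K_{3,3}$ or $K_{3,3}+e$. The delicate part is the extra vertex $Z$: its branch set must sit in $\Gamma_3+e$ so that it has four neighbours among the remaining six branch sets and those four form a 4-cycle in the would-be $K_{3,3}$. Since $\Gamma_3$ is small (it is one of three sporadic 4-connected graphs with few vertices appearing in Theorem \ref{thm1.2}), a short systematic search suffices; no generic machinery beyond explicit minor exhibition is needed.
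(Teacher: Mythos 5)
Your approach is sound and of the same basic kind as the paper's (reduce the non-edges by symmetry, then exhibit a $K_{3,3}+v$ minor for each representative), but it is organized differently. You propose to work directly on $\Gamma_3$: compute $\mathrm{Aut}(\Gamma_3)$, split its non-edges into orbits, and give explicit branch sets for one representative per orbit. The paper instead exploits the fact that $\Gamma_3$ is itself $\Gamma_2$ plus the edge $av_1$, together with monotonicity of minor containment under edge addition: it first checks (up to symmetry) all single-edge extensions of $\Gamma_2$ --- namely $\Gamma_2+av_2$, $\Gamma_2+av_3$, $\Gamma_2+v_5v_1$, $\Gamma_2+v_5v_2$ contain $K_{3,3}+v$, while $\Gamma_2+av_1=\Gamma_3$ does not --- so every non-edge of $\Gamma_3$ that is already a non-edge of $\Gamma_2$ in one of those orbits is settled immediately, and only the one genuinely new case $\Gamma_3+bv_7$ requires a fresh figure. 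The paper's route buys economy (the $\Gamma_2+e$ figures are also what Lemma \ref{lem3.5}'s case analysis needs), while yours buys self-containedness at the cost of redoing the whole orbit analysis for $\Gamma_3$. One caveat: as written your text is a plan rather than a completed argument --- you have not actually identified $\Gamma_3$, its automorphism group, the non-edge orbits, or any concrete branch sets; to stand as a proof it must be carried out, though this is the same kind of finite verification the paper disposes of by pictures, so there is no obstruction of principle.
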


\begin{proof}
	As shown in Figure \ref{fig:14}, it can be seen that $\Gamma_2 + av_3 $, $\Gamma_2 + av_2 $, $\Gamma_2 + v_5v_1 $ and $\Gamma_2 + v_5v_2 $ contain $ K_{3,3}+v $ as a minor, and $\Gamma_2 + av_1 $ is $\Gamma_3$ which is a $ K_{3,3}+v $-minor-free graph. Similarly, $\Gamma_3 + bv_7$ contains $ K_{3,3}+v $ as a minor (see Figure \ref{fig:15}). Thus, the graph obtained by adding a new edge to $\Gamma_3$ has a $ K_{3,3}+v$ minor.
	
	\begin{figure}[!htb]
		\centering
		\includegraphics[width=0.7\linewidth]{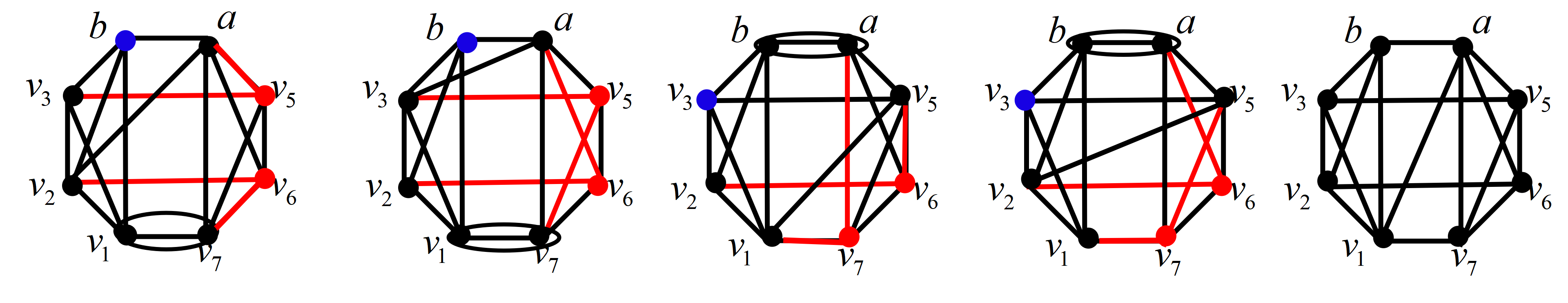}
		\caption{$\Gamma_2 + av_2 $, $\Gamma_2 + av_3 $,  $\Gamma_2 + v_5v_1 $, $\Gamma_2 + v_5v_2 $, $\Gamma_3$ }
		\label{fig:14}
	\end{figure}

\begin{figure}[!htb]
	\centering
	\includegraphics[width=0.15\linewidth]{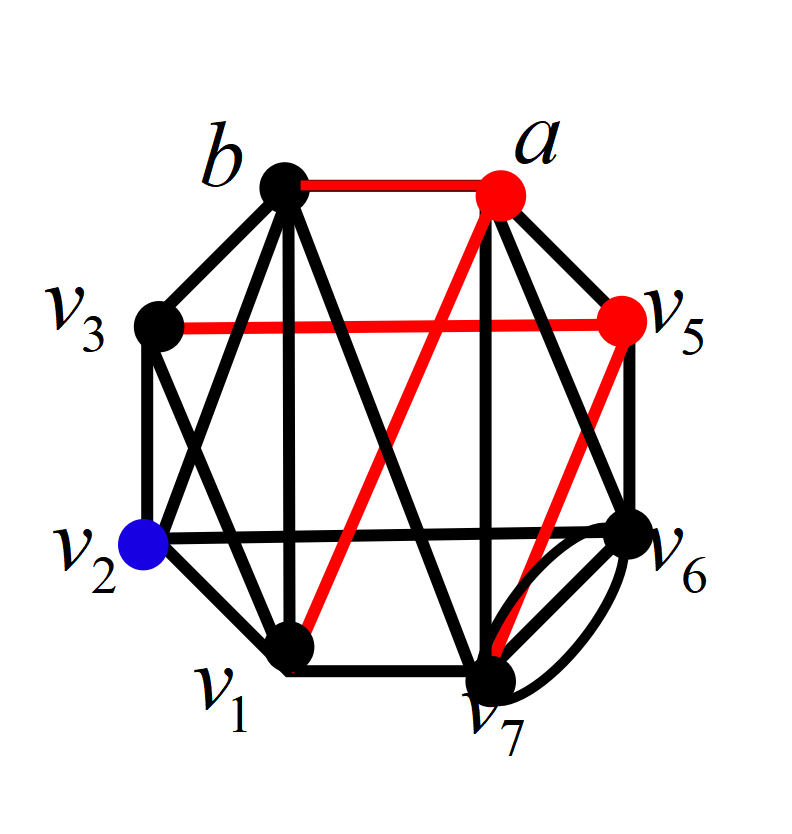}
	\caption{$\Gamma_3 + bv_7$}
	\label{fig:15}
\end{figure}

\end{proof}

			\begin{lem}\label{lem3.5}
				The graph obtained by 4-splitting a vertex of degree 6 in $\Gamma_1$ contains $K_{3,3}+v$ as a minor besides $\Gamma_2$ and $\Gamma_3$, .
		\end{lem}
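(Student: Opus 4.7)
The plan is to mirror the casework of Lemma~\ref{lem3.2}, now applied to the (unique) vertex $u$ of degree $6$ in $\Gamma_1$, and then reduce non-minimal splits to edge additions handled by Lemma~\ref{lem3.4}. First I would pin down the local structure at $u$. Since $\Gamma_1$ has seven vertices and $u$ has degree $6$, the vertex $u$ is universal, and the remaining six vertices induce a cubic graph on six vertices; among the only two such graphs, $K_{3,3}$ cannot occur, because adding a universal apex to $K_{3,3}$ already yields $K_{3,3}+v$ as a subgraph, contradicting the minor-freeness of $\Gamma_1$. Hence $\Gamma_1-u$ is the $3$-prism $H$, with two triangles $T_1,T_2$ joined by a perfect matching, and the stabilizer of $u$ in $\mathrm{Aut}(\Gamma_1)$ equals $\mathrm{Aut}(H)\cong S_3\times\mathbb{Z}_2$ of order~$12$.

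Next I would analyze the minimal case $|A|=|B|=3$. Because $|N(u)|=6$ we must have $A\cap B=\emptyset$, so such splits are in bijection with the unordered $3{+}3$ partitions of $V(H)$; there are $\binom{6}{3}/2=10$ of them. A short orbit calculation under $\mathrm{Aut}(H)$ shows that they fall into exactly three classes: (i) the triangle-vs-triangle partition $\{T_1,T_2\}$; (ii) partitions in which each part consists of two vertices of one triangle together with the matching partner of the missing triangle-vertex (so that no matching edge lies inside either part); and (iii) partitions in which each part contains exactly one matching edge. For each representative I would construct the resulting split graph explicitly and, following the template of Lemma~\ref{lem3.1}, either exhibit a $K_{3,3}+v$ minor by naming six branch sets together with a distinguished $4$-cycle for $N(v)$, or verify isomorphism with $\Gamma_2$ of Figure~\ref{fig:01}. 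Because $\Gamma_3=\Gamma_2+av_1$ in the proof of Lemma~\ref{lem3.4}, $\Gamma_3$ strictly exceeds $\Gamma_2$ in edge count and therefore cannot arise from a minimal split; hence exactly one of the three orbits yields $\Gamma_2$, while the other two must yield $K_{3,3}+v$ minors.

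Finally I would handle non-minimal splits with $|A\cap B|\geq 1$. Every such split is obtained from one of the minimal-split graphs by adding one or more edges of the form $av_i$ (for some $v_i\in B$) or $bv_j$ (for some $v_j\in A$). If the underlying minimal graph already contains a $K_{3,3}+v$ minor, edge addition preserves this; if it is $\Gamma_2$, then by Lemma~\ref{lem3.4} every edge addition either produces $\Gamma_3$ (the remaining permitted exception) or a $K_{3,3}+v$ minor. The hardest step will be the minimal-case enumeration: correctly matching each of the three orbits to either $\Gamma_2$ or the minor-containing case. The isomorphism check against Figure~\ref{fig:01} requires careful tracking of how the split vertices $a$ and $b$ attach to the prism $H$, and for the two minor-containing orbits the branch sets must be chosen so that the prism's rigid, non-bipartite structure still realizes the bipartite skeleton of $K_{3,3}+v$---a naive choice risks collapsing to a mere $K_5$ or $K_{3,3}$ minor that does not extend.
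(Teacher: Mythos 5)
Your plan is correct and essentially the same as the paper's: enumerate the minimal disjoint $3{+}3$ splits of the degree-6 vertex, check that every resulting graph except the one isomorphic to $\Gamma_2$ contains a $K_{3,3}+v$ minor, and reduce all overlapping or non-minimal splits to adding edges $av_i$ or $bv_j$ to these graphs, where Lemma~\ref{lem3.4} (whose proof treats $\Gamma_2$ plus an edge) leaves $\Gamma_3$ as the only further exception. The only difference is organizational: you use the prism-plus-apex structure of $\Gamma_1$ to reduce the ten partitions to three orbit representatives, whereas the paper lists all ten graphs in its Cases 1 and 2 and checks them via figures; note only that your phrase ``the other two orbits must yield $K_{3,3}+v$ minors'' is not a deduction but precisely the explicit verification (branch sets) that you, like the paper, still have to carry out.
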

		\begin{proof}
			
		Let \{$v_1,v_2,\ldots,v_7$\} be vertices of $\Gamma_1$ (shown in Figure \ref{fig:11}). We first consider the minimal case for $A$ and $B$, where $A, B$ are the subsets of the $N_G(v_4)$, $A \cup B$=$N_G(v_4)$. Suppose that $a, b$ are the new vertices obtained by 4-splitting the vertex $v_4$. The degree of $v_4$ is 6, thus there are three cases for $A$ and $B$. The first case is that one of $A$ $\cap$ \{$v_3, v_5$\} and $B$ $\cap$ \{$v_3, v_5$\} is $\emptyset$. Without loss of generality, we assume that $B$ $\cap$ \{$v_3, v_5$\} is $\emptyset$. The second case is that $A$ $\cap$ \{$v_3, v_5$\} $\neq$ $\emptyset$, $B$ $\cap$ \{$v_3, v_5$\} $\neq$ $\emptyset$ and $A$ $\cap$ $B$ $=$ $\emptyset$. The third case is that $A$ $\cap$ \{$v_3, v_5$\} $\neq$ $\emptyset$, $B$ $\cap$ \{$v_3, v_5$\} $\neq$ $\emptyset$ and $A$ $\cap$ $B$ $\neq$ $\emptyset$.
			
		 {\bf Case 1.}  $B$ $\cap$ \{$v_3, v_5$\} = $\emptyset$.
			 
		We consider the minimal case for $|A|=|B|=3$ and $B$ $\cap$ \{$v_3, v_5$\} = $\emptyset$. If the set $A$ is \{$v_3, v_5, v_1$\}, then the set $B$ is \{$v_2, v_6, v_7$\} in the minimal case. Similarly if the set $A$ is \{$v_3, v_5, v_2$\}, \{$v_3, v_5, v_6$\} or \{$v_3, v_5, v_7$\}, then the set $B$ is \{$v_1, v_6, v_7$\}, \{$v_2, v_1, v_7$\} or \{$v_2, v_6, v_1$\} respectively. All the four resulting graphs contain $ K_{3,3}+v $ as a minor (see Figure \ref{fig:11}).
		 
			\begin{figure}[!htb]
				\centering
				\includegraphics[width=0.7\linewidth]{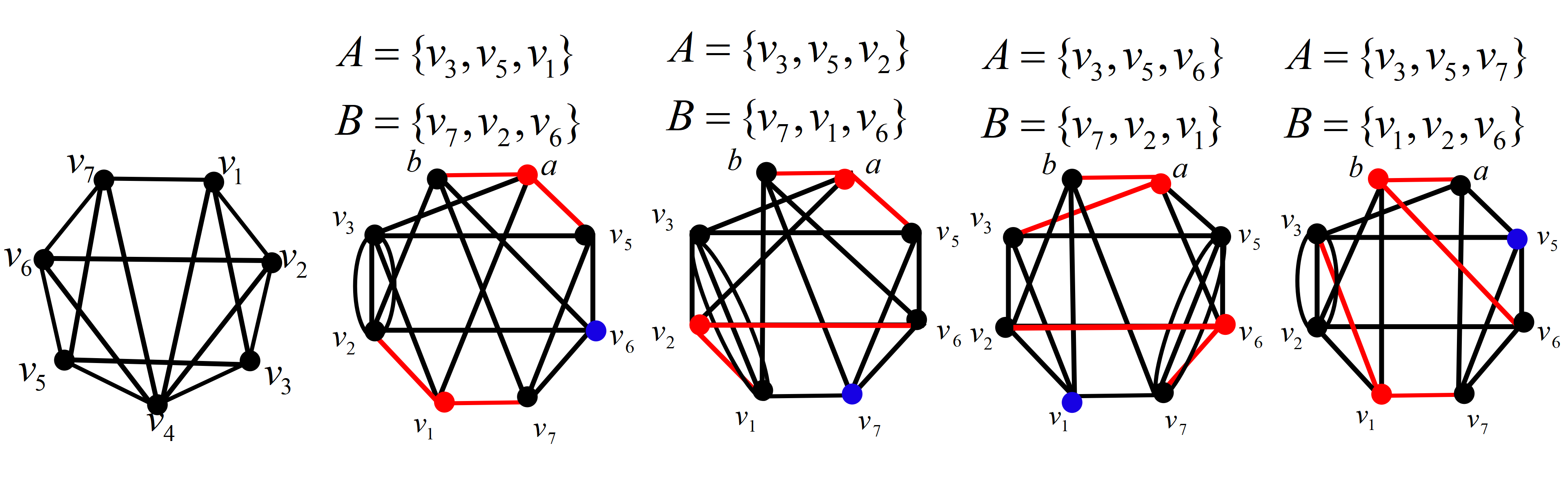}
				\caption{$\Gamma_1$ and four graphs of Case 1}
				\label{fig:11}
			\end{figure}
		
		 {\bf Case 2.}  $A$ $\cap$ \{$v_3, v_5$\} $\neq$ $\emptyset$, $B$ $\cap$ \{$v_3, v_5$\} $\neq$ $\emptyset$ and $A$ $\cap$ $B$ $=$ $\emptyset$
		 
				We consider the minimal case for $|A|=|B|=3$ and assume that $v_5$$\in$$A$, $v_3$$\in$$B$ without loss of generality. If the set $A$ is \{$v_1, v_2, v_5$\}, then the set $B$ is \{$v_3, v_6, v_7$\} in the minimal case. Similarly if the set $A$ is \{$v_1, v_5, v_6$\}, \{$v_1, v_5, v_7$\}, \{$v_2, v_5, v_6$\}, \{$v_2, v_5, v_7$\} or \{$v_5, v_6, v_7$\}, then the set $B$ is \{$v_2, v_3, v_7$\}, \{$v_2, v_3, v_6$\}, \{$v_3, v_1, v_7$\}, \{$v_3, v_1, v_6$\} or \{$v_1, v_2, v_3$\} respectively. It is easy to check that all the
				resulting graphs contain $ K_{3,3}+v $ as a minor besides the sixth graph $\Gamma_2$ (see Figure \ref{fig:12}).
		 
		 \begin{figure}[!htb]
		 	\centering
		 	\includegraphics[width=0.5\linewidth]{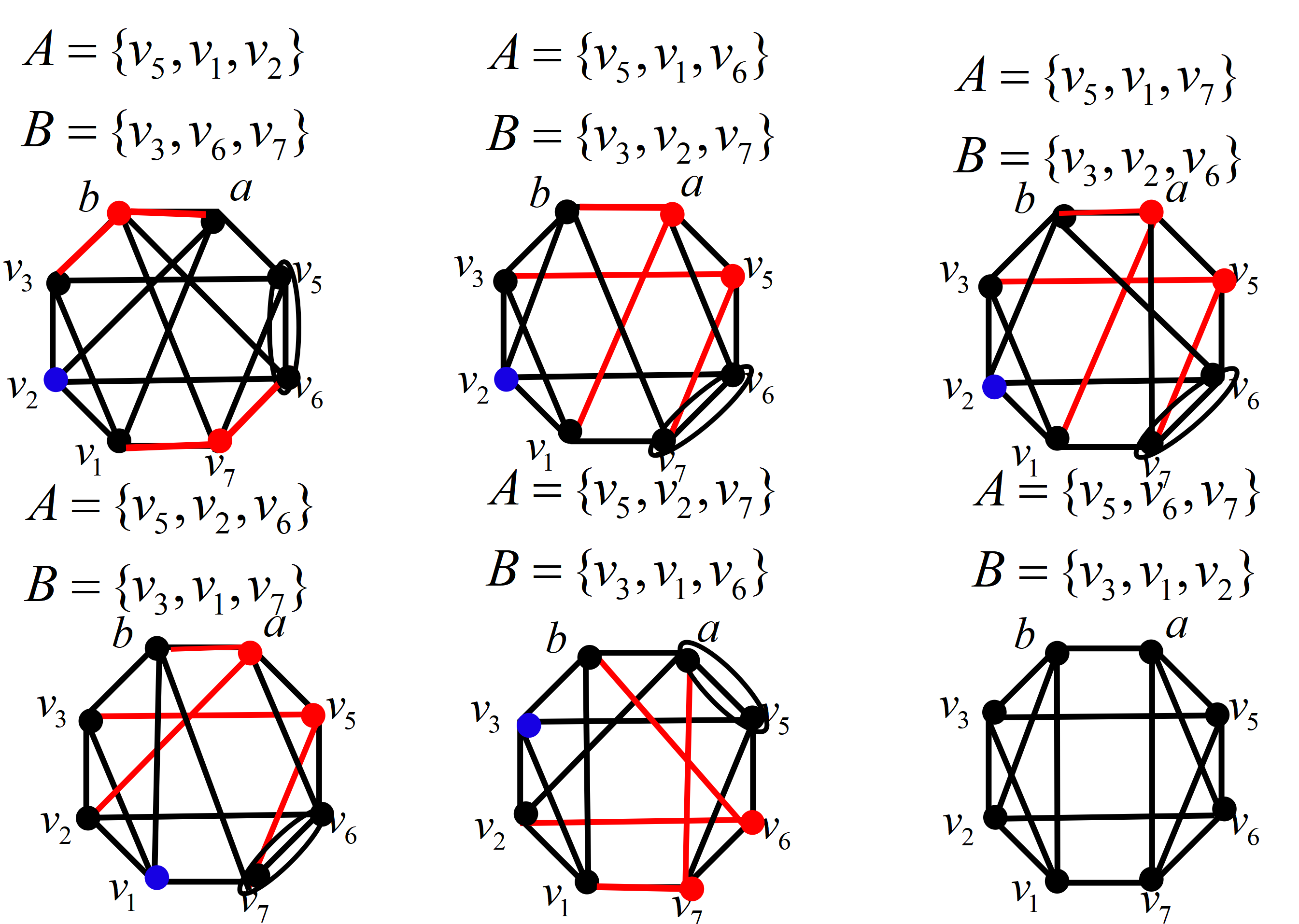}
		 	\caption{Six graphs of Case 2}
		 	\label{fig:12}
		 \end{figure}
	 
	  {\bf Case 3.}  $A$ $\cap$ \{$v_3, v_5$\} $\neq$ $\emptyset$, $B$ $\cap$ \{$v_3, v_5$\} $\neq$ $\emptyset$ and $A$ $\cap$ $B$ $\neq$ $\emptyset$.
	   
	   {\bf Case 3.1.}  $A$ $\cap$ $B$ $=$ $\{v_1\}$
	  
	  Since $|N_G(v_4)|$=6 and $A$ $\cap$ $B$ $=$ $\{v_1\}$, the set $A$ or set $B$ contains 4 vertices in the minimal case. The graph generated in Case 3.1 is equivalent to add a new edge $av_1$ or $bv_1$ to the above graphs mentioned in Case 2. Thus, the graphs in Case 3.1 contain $ K_{3,3}+v $ as a minor besides $\Gamma_3$. The cases of $A$ $\cap$ $B$ $=$ $\{v_2\}$, $A$ $\cap$ $B$ $=$ $\{v_6\}$ and $A$ $\cap$ $B$ $=$ $\{v_7\}$ are similar to the above.
	  
	   {\bf Case 3.2.}  $A$ $\cap$ $B$ $=$ $\{v_3\}$
	  
	  Since $|N_G(v_4)|$=6 and $A$ $\cap$ $B$ $=$ $\{v_3\}$, the set $A$ or set $B$ contains 4 vertices in the minimal case. The graph generated in Case 3.2 is equivalent to add a new edge $av_3$ or $bv_3$ to the above graphs mentioned in Case 1 and Case 2. Thus, the graphs in Case 3.2 all contain $ K_{3,3}+v $ as a minor according to the Lemma \ref{lem3.4} and the results of the Case 1 and Case 2. The case of $A$ $\cap$ $B$ $=$ $\{v_5\}$ is similar to the above.
	 
	   In the other cases that are not minimal, it is equivalent to add new edges $av_i$ or $bv_j$ to the above graphs mentioned in Figure \ref{fig:11} and \ref{fig:12}. According to Lemma \ref{lem3.4} and above results, the $ K_{3,3}+v $-minor-free graphs are $\Gamma_2$ and $\Gamma_3$ by 4-splitting a vertex of degree 6 in $\Gamma_1$. 
	   
	   \end{proof}
	   
	   \begin{lem}\label{lem3.6}  	
	   Every 4-split of  $\Gamma_2$ contains a $ K_{3,3}+v $-$ minor$.
	   \end{lem}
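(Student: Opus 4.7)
The plan is to mirror the structure of Lemmas \ref{lem3.2}, \ref{lem3.3} and \ref{lem3.5}: exploit the automorphisms of $\Gamma_2$ to reduce the number of vertex orbits that must be treated, then for each representative orbit enumerate the minimal $|A|=|B|=3$ splits (or splits of the appropriate minimal sizes when the vertex has larger degree), exhibit a $K_{3,3}+v$-minor in each resulting graph, and finally reduce the non-minimal splits to edge-additions that are already covered by Lemma \ref{lem3.4}. First I would fix a labelling $v_1,\ldots,v_7,a$ of $\Gamma_2$ consistent with Figure \ref{fig:12} (recall $\Gamma_2$ is the sixth graph of Case 2 in the proof of Lemma \ref{lem3.5}), list the degree sequence, and determine the orbits of the automorphism group; this reduces the proof to splitting one vertex from each orbit.

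For each chosen representative $v$ I would then enumerate all ordered pairs $(A,B)$ with $A\cup B = N_{\Gamma_2}(v)$, $\min\{|A|,|B|\}=3$, and $|A\cap B|$ equal to the forced minimum $2\deg(v)^{-1}$-type value, just as in the proofs of Cases 1--3 of Lemma \ref{lem3.5}. For each such minimal split I would draw the resulting graph and exhibit a $K_{3,3}+v$-minor explicitly, by contracting a short list of edges to produce six degree-$3$ vertices realizing a $K_{3,3}$ together with the extra apex $v$ joined to a $4$-cycle. The drawings follow the same template as Figures \ref{fig:03}--\ref{fig:12}, so the verifications are visual contraction arguments.

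To dispose of the non-minimal splits I would argue, exactly as in the closing paragraph of the proof of Lemma \ref{lem3.5}, that any non-minimal $4$-split of $v$ in $\Gamma_2$ is obtained from a minimal one by adding some extra edges of the form $av_i$ or $bv_j$. Since adding an edge to a graph preserves every minor, any non-minimal split that starts from a minimal split already containing $K_{3,3}+v$ still contains it. The only minimal split whose underlying graph does not already contain $K_{3,3}+v$ is the one giving $\Gamma_3$ (obtained from $\Gamma_2$ via Case 3.1 of Lemma \ref{lem3.5}); for this case I invoke Lemma \ref{lem3.4}, which says that any edge added to $\Gamma_3$ produces a $K_{3,3}+v$-minor. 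Combining these observations finishes the lemma.

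The main obstacle I expect is purely organizational: identifying the automorphism orbits of $\Gamma_2$ cleanly enough that only a handful of vertices need to be split, and then arranging the minimal-split enumeration so that the reduction in the previous paragraph is transparent. The minor-exhibitions themselves are short contraction arguments of the same type already used repeatedly in Section 3, so no genuinely new technique is needed; the real work is checking that every minimal-case graph for every orbit representative indeed has a $K_{3,3}+v$-minor, which is a finite (and small) case analysis.
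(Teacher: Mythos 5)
Your overall scaffolding (orbit reduction, enumeration of the minimal $|A|=|B|=3$ splits, figure-style minor exhibitions, and the reduction of non-minimal splits to edge additions over minimal ones) is exactly the paper's strategy: since $\Gamma_2$ is $4$-regular on the eight vertices $\{a,b,v_1,v_2,v_3,v_5,v_6,v_7\}$, the paper splits only the two orbit representatives $a$ and $v_5$, obtains seven minimal-case graphs for each, checks that all fourteen contain $K_{3,3}+v$, and then observes that every non-minimal split is one of these fourteen plus extra edges. However, your plan contains a genuine error in the key factual claim. You assert that one of the minimal splits of $\Gamma_2$ ``does not already contain $K_{3,3}+v$'' because it ``gives $\Gamma_3$,'' and you propose to patch this with Lemma \ref{lem3.4}. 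But $\Gamma_3$ is not a $4$-split of $\Gamma_2$ at all: a $4$-split of $\Gamma_2$ has nine vertices, whereas $\Gamma_3=\Gamma_2+av_1$ has the same eight vertices as $\Gamma_2$. The graph $\Gamma_3$ arises in Case 3.1 of Lemma \ref{lem3.5} as a \emph{non-minimal split of $\Gamma_1$} (equivalently, $\Gamma_2$ plus an edge), not as a split of $\Gamma_2$. So the exceptional case you set aside does not exist, and no appeal to Lemma \ref{lem3.4} is needed in this lemma.

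Moreover, if your claim were taken at face value it would refute, not prove, the statement: a $K_{3,3}+v$-minor-free minimal $4$-split of $\Gamma_2$ would be a direct counterexample to ``every $4$-split of $\Gamma_2$ contains a $K_{3,3}+v$-minor,'' and invoking Lemma \ref{lem3.4} only controls graphs obtained by adding edges to $\Gamma_3$, never the alleged exceptional split itself. The correct content of the proof is precisely that \emph{all} fourteen minimal splits (seven from splitting $a$, seven from splitting $v_5$, the only two vertex orbits since $\Gamma_2$ is $4$-regular) contain $K_{3,3}+v$; once that finite check is done, the non-minimal splits follow immediately because they are obtained from minimal ones by adding edges $av_i$ or $bv_j$. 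You should delete the $\Gamma_3$ exception, verify the fourteen minimal cases without carve-outs, and keep the closing edge-addition argument as is.
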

   
   \begin{proof}
   	We consider the minimal case for $|A|=|B|=3$, where $A, B$ are the subsets of the $N_G(v)$, $A \cup B$=$N_G(v)$. Let \{$a,b,v_1,v_2,v_3,v_5,v_6,v_7$\} be vertices of $\Gamma_2$ (shown in Figure \ref{fig:16}). By symmetry, we split the vertex $a$ and the vertex $v_5$. Note that $d(a)=4$ and $d(v_5)=4$. Thus the process of 4-splitting the vertex $a$ or $v_5$ is similar to that of Lemma \ref{lem3.3}.
   	
   	{\bf Case 1.}  Split the vertex $a$.
   	
   	$A \cap \{b, v_5\}$ $\neq$ $\emptyset$, $B \cap \{b, v_5\}$ $\neq $ $\emptyset$ in the minimal case for $|A|=|B|=3$. Without loss of generality, we assume that $v_5$ $\in$ $A$, $b$ $\in$ $B$. We have seven graphs and it is clear that all the seven resulting graphs contain $ K_{3,3}+v $ as a minor (see Figure \ref{fig:16}).
   	
\begin{figure}[!htb]
	\centering
	\includegraphics[width=0.7\linewidth]{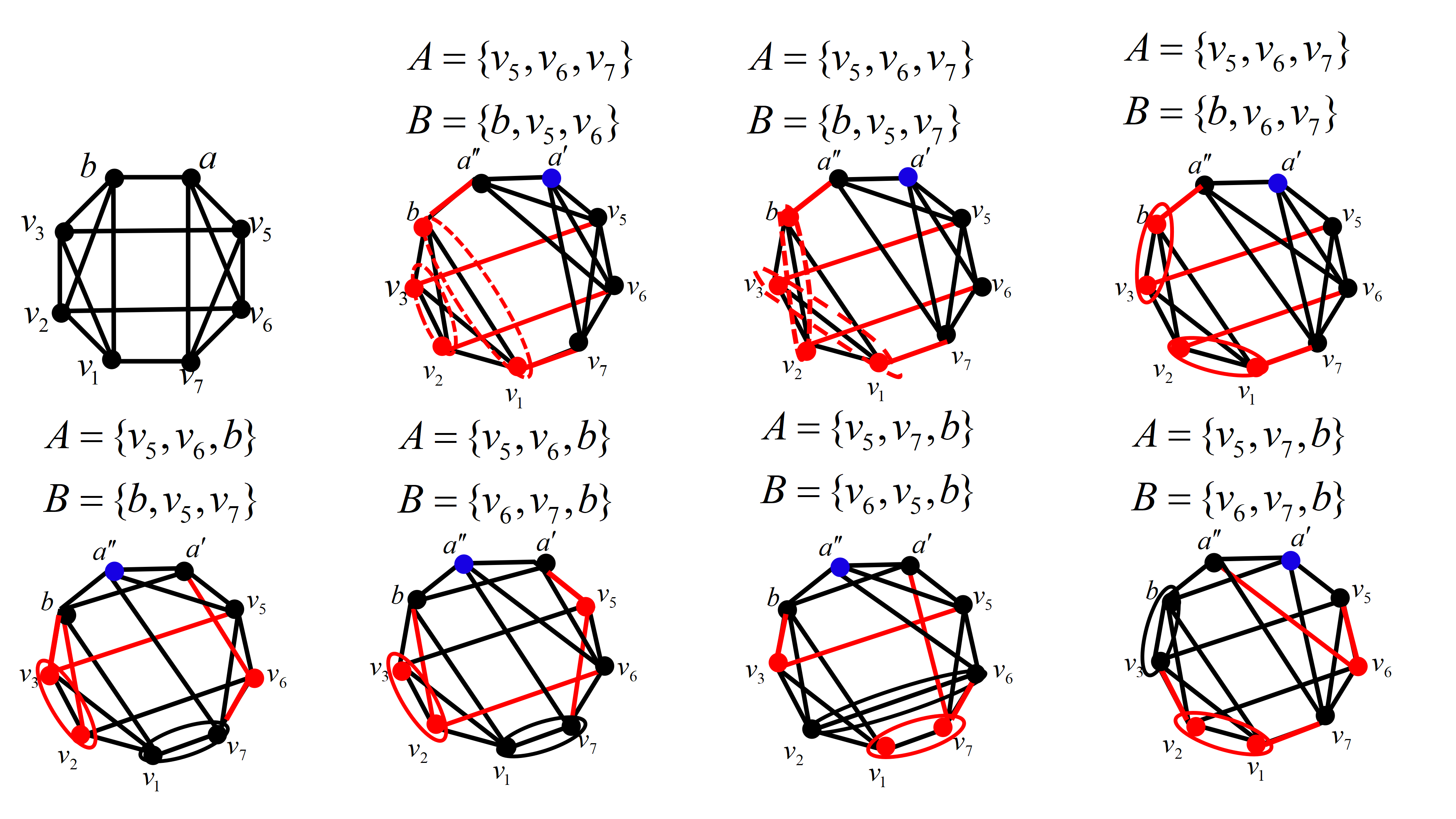}
	\caption{$\Gamma_2$ and seven graphs of Case 1}
	\label{fig:16}
\end{figure}

 	{\bf Case 2.}  Split the vertex $v_5$.
   		
   	$A \cap \{a, v_6\}$ $\neq$ $\emptyset$, $B \cap \{a, v_6\}$ $\neq $ $\emptyset$ in the minimal case for $|A|=|B|=3$. Without loss of generality, we assume that $v_6$ $\in$ $A$, $a$ $\in$ $B$. We obtain seven graphs which contain $K_{3,3}+v$ as a minor (see Figure \ref{fig:17}).
   		
   		\begin{figure}[!htb]
   			\centering
   			\includegraphics[width=0.7\linewidth]{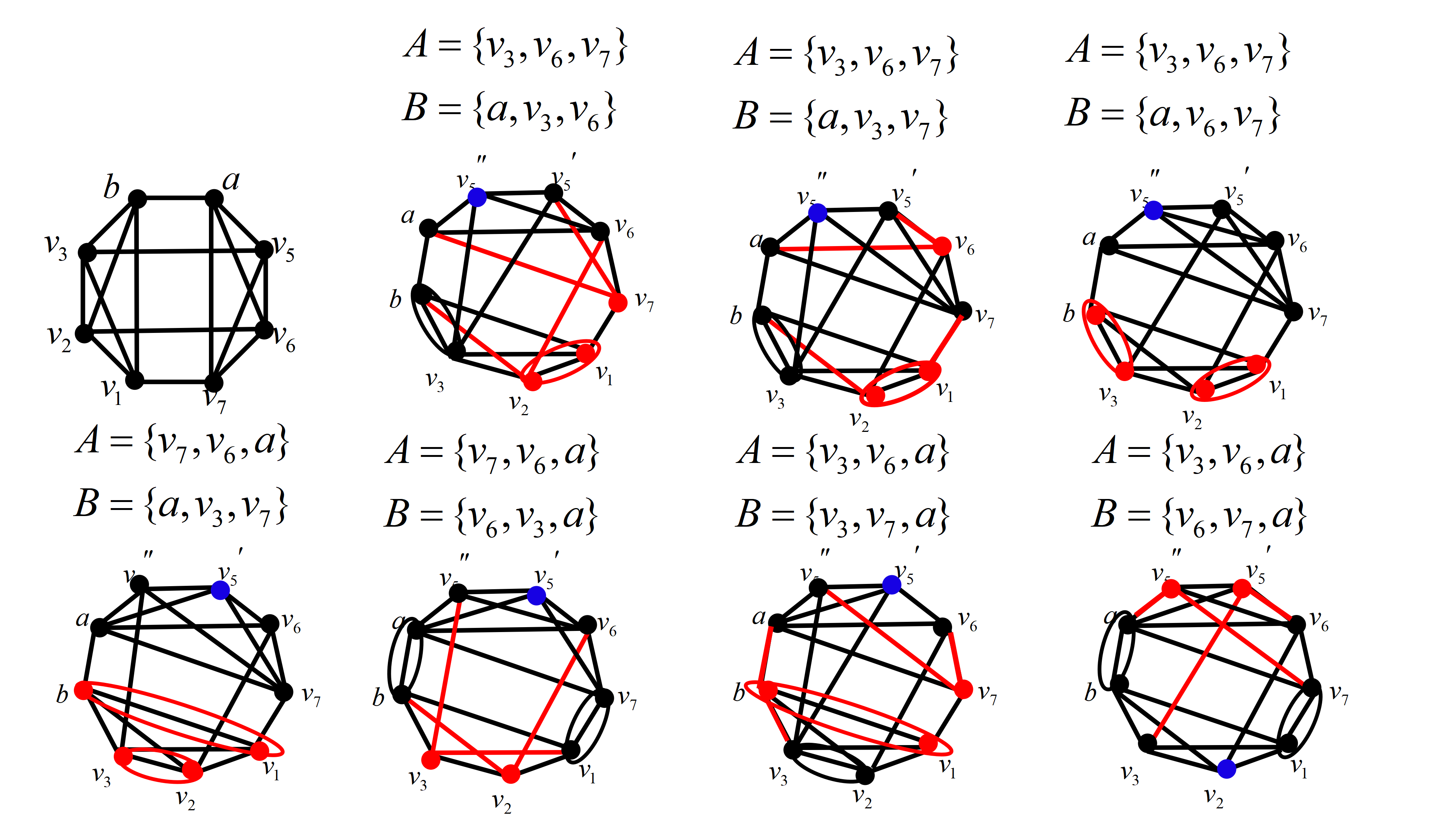}
   			\caption{$\Gamma_2$ and seven graphs of Case 2}
   			\label{fig:17}
   		\end{figure}
   	
   	In all minimal cases, the new graph $G^{'}$ generated by 4-splitting vertice from $\Gamma_2$ with $ |A| = |B| =3 $ contains  $ K_{3,3}+v $. Clearly, all the other graphs obtained by 4-splitting a vertex of degree 4 in $\Gamma_2$ contain  $ K_{3,3}+v $ as a minor.		
	   
		\end{proof}
	
	 \begin{lem}\label{lem3.7}
		Every 4-split of $\Gamma_3$ contains a $ K_{3,3}+v $-$ minor$.
		
	\end{lem}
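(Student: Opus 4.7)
The plan is to mirror the case analyses of Lemmas \ref{lem3.3} and \ref{lem3.6}, but to cut the number of cases drastically by exploiting two observations: $\Gamma_3$ has only three vertex orbits, and a 4-split of $\Gamma_3$ at any vertex outside the orbit of $a$ is a 4-split of $\Gamma_2$ together with the extra edge $av_1$, which is handled by Lemma \ref{lem3.6}.

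First I would fix coordinates on $\Gamma_3$. Since $\Gamma_3=\Gamma_2+av_1$, it has vertex set $\{a,b,v_1,v_2,v_3,v_5,v_6,v_7\}$, contains the two $K_4$'s on $\{a,v_5,v_6,v_7\}$ and on $\{b,v_1,v_2,v_3\}$ joined by the five edges $ab,av_1,v_1v_7,v_2v_6,v_3v_5$, and has degree sequence $(5,5,4,4,4,4,4,4)$ with the degree-$5$ vertices being $a$ and $v_1$. A direct check shows that $\tau:v_2\leftrightarrow v_3,\,v_5\leftrightarrow v_6$ and $\sigma:a\leftrightarrow v_1,\,b\leftrightarrow v_7,\,v_5\leftrightarrow v_2,\,v_6\leftrightarrow v_3$ are automorphisms, and that the three vertex orbits they produce are $\{a,v_1\}$, $\{b,v_7\}$ and $\{v_2,v_3,v_5,v_6\}$.

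The reduction step is the key observation: for every $v\in V(\Gamma_3)\setminus\{a,v_1\}$ one has $N_{\Gamma_3}(v)=N_{\Gamma_2}(v)$, so any 4-split of $\Gamma_3$ at such $v$ coincides with a 4-split of $\Gamma_2$ at $v$ together with the non-incident extra edge $av_1$. By Lemma \ref{lem3.6} the underlying $\Gamma_2$-split already contains a $K_{3,3}+v$-minor, and adding an edge cannot destroy a minor. Combined with $\sigma$, which identifies 4-splits at $v_1$ with 4-splits at $a$, this reduces the problem to 4-splits of the single vertex $a$.

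For this final case, $\deg(a)=5$ and $N(a)=\{v_5,v_6,v_7,b,v_1\}$, so the minimal sub-case has $|A|=|B|=3$ and $|A\cap B|=1$. I would partition according to whether the pair $\{b,v_1\}$ is split between $A$ and $B$ or lies entirely on one side, and within each partition enumerate the position of $A\cap B$ inside $\{v_5,v_6,v_7\}$ up to $\tau$-symmetry; this yields a short list of concrete graphs. For each one I would display a $K_{3,3}+v$-minor by an explicit contraction/deletion sequence exploiting the two near-$K_4$'s that survive the split. Non-minimal splits of $a$ are obtained from the minimal ones by inserting additional edges of the form $a'u_i$ or $b'u_j$ and therefore still contain the minor. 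I expect the main obstacle to be the ad-hoc nature of the minor constructions in the $a$-split cases: although few in number, each is likely to require its own figure in the spirit of Figures \ref{fig:08}--\ref{fig:10}.
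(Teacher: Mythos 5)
Your plan is essentially the paper's own proof of Lemma~\ref{lem3.7}: the paper likewise disposes of splits at the degree-4 vertices by viewing them as 4-splits of $\Gamma_2$ with the extra edge added (so Lemma~\ref{lem3.6} applies), reduces by symmetry to splitting the single degree-5 vertex $a$, enumerates the minimal splits of $a$ and exhibits a $K_{3,3}+v$-minor in each (Figures~\ref{fig:18}--\ref{fig:23}), and then treats non-minimal splits as edge additions to those graphs. Just note that in the final enumeration the common vertex $A\cap B$ can also be $b$ or $v_1$ (the paper's Cases 2.1 and 2.5), not only an element of $\{v_5,v_6,v_7\}$, and that the per-case minor exhibitions you defer are precisely the content of the paper's figures.
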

	
	\begin{proof}
	We consider the minimal case for $|A|=|B|=3$, where $A, B$ are the subsets of the $N_G(v)$, $A \cup B$=$N_G(v)$. Let \{$a,b,v_1,v_2,v_3,v_5,v_6,v_7$\} be vertices of $\Gamma_3$ (shown in Figure \ref{fig:18}). It appears obvious that the graph obtained by splitting a vertex of degree 4 in $\Gamma_3 $ is equivalent to adding a new edge to the one of fourteen graphs mentioned in the Lemma \ref{lem3.6}. Hence we only consider the 4-splits of the vertex of degree 5 in $\Gamma_3$. Up to symmetry, we consider the 4-spilt of the vertex $a$. The process of 4-spiltting the vertex $a$ is similar to that of Lemma \ref{lem3.2}.
	
	 {\bf Case 1.}  $B$ $\cap$ \{$b, v_5$\} = $\emptyset$,  \{$b, v_5$\} $\subset$ $A$.
	 
	  Note that $B$=\{$v_1$, $v_6$, $v_7$\}. The set $A$ may be \{$v_5, v_6, b$\}, \{$v_1, v_5, b$\} or \{$v_5, v_7, b$\} in the minimal case. All the resulting graphs contain $K_{3,3}+v$ as a minor (see Figure \ref{fig:18}).
	 	
	 	\begin{figure}[!htb]
	 		\centering
	 		\includegraphics[width=0.7\linewidth]{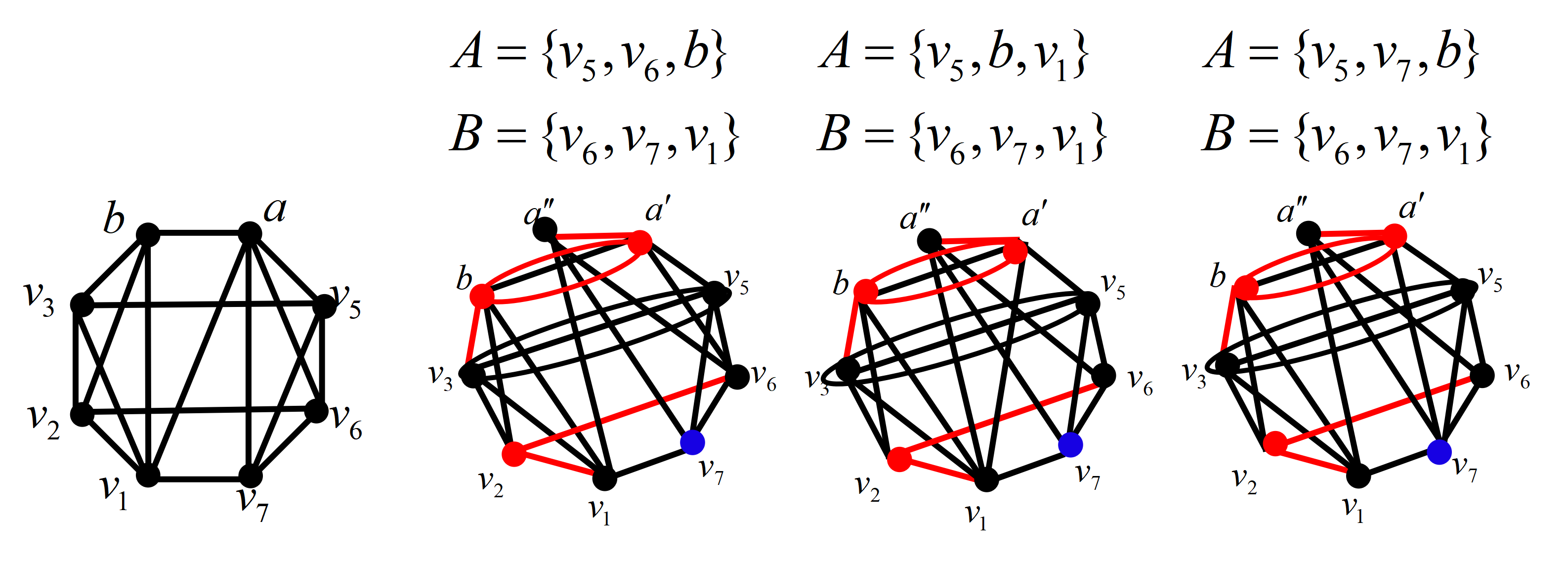}
	 		\caption{ $\Gamma_3$ and three graphs of Case 1}
	 		\label{fig:18}
	 	\end{figure}
 	
   {\bf Case 2.}  $A$ $\cap$ $\{b, v_5\}$ $\neq$ $\emptyset$, $B$ $\cap$ $\{b, v_5\}$ $\neq$ $\emptyset$.
  
  In this case, we assume that $v_5$ $\in$ $A$, $b$ $\in$ $B$ without loss of generality. Since $|N_G(a)|$ = 5, $A$ $\cap$ $B$ $\neq$ $\emptyset$ in the minimal case for $|A|=|B|=3$.

  {\bf Case 2.1.}  $A$ $\cap$ $B$ = \{$v_1$\}
  
 Since $v_1$$\in$$A$ and $v_5$$\in$$A$, one of $v_6$ and $v_7$ must belong to $A$. If $v_6$ $\in$ $A$, then $v_7$ must belong to $B$ in the minimal case. If $v_7$ $\in$ $A$, then $v_6$ must belong to $B$ in the minimal case. We obtain two graphs which contain $K_{3,3}+v$ as a minor (see Figure \ref{fig:19}).
  
  \begin{figure}[!htb]
  	\centering
  	\includegraphics[width=0.4\linewidth]{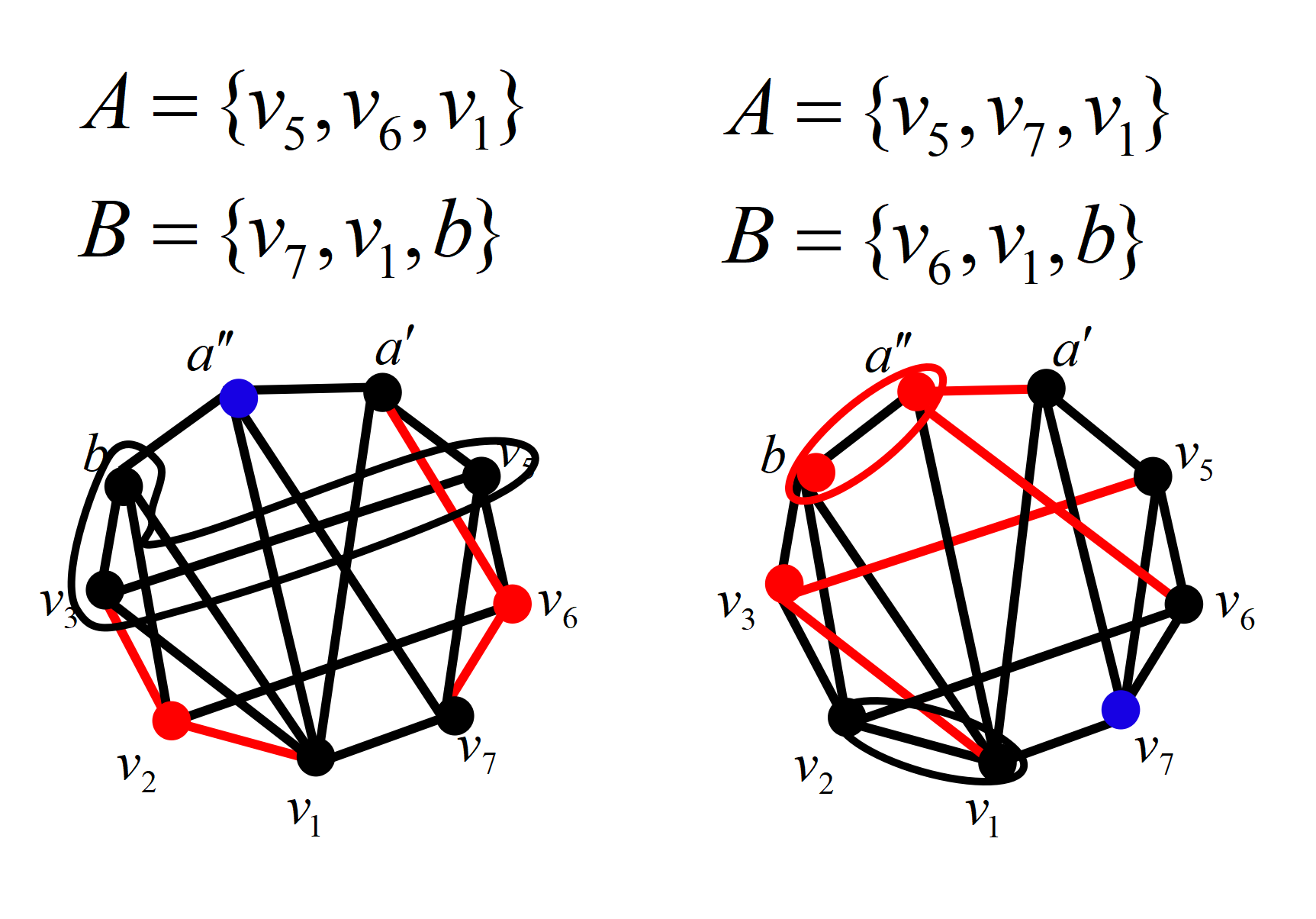}
  	\caption{Two graphs in Case 2.1}
  	\label{fig:19}
  \end{figure}

 {\bf Case 2.2.}  $A$ $\cap$ $B$ = \{$v_6$\}
 
Similarly to the Case 2.1, we have two graphs. It is not difficult to see that both the two resulting graphs contain $ K_{3,3}+v $ as a minor (see Figure \ref{fig:20}).
  
  \begin{figure}[!htb]
  	\centering
  	\includegraphics[width=0.4\linewidth]{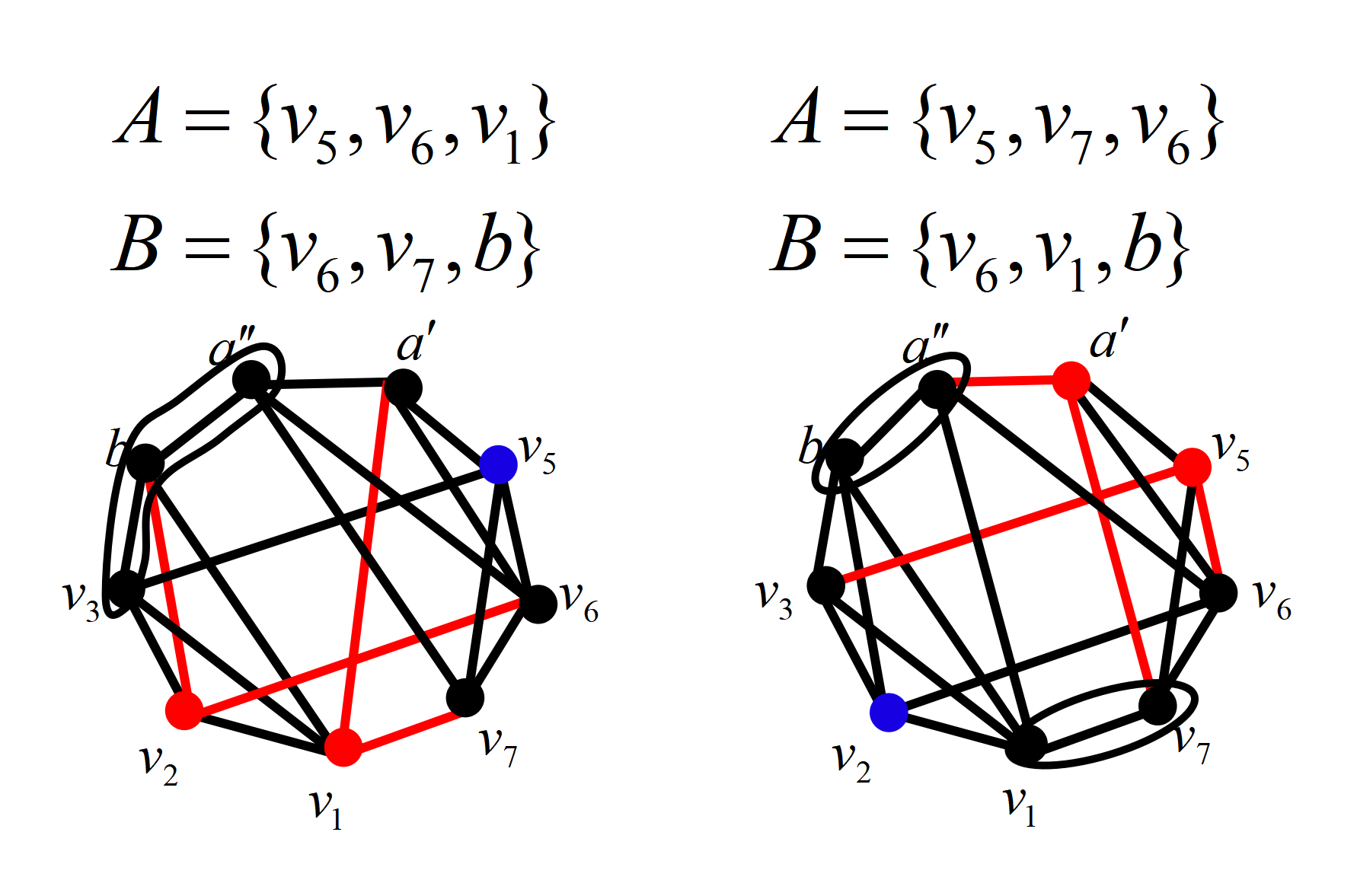}
  	\caption{Two graphs in Case 2.2}
  	\label{fig:20}
  \end{figure}

 {\bf Case 2.3.}  $A$ $\cap$ $B$ = \{$v_7$\}
 
By an argument similar to that of Case 2.1, we get two graphs and verify that both the two resulting graphs contain $ K_{3,3}+v $ as a minor (see Figure \ref{fig:21}).

\begin{figure}[!htb]
	\centering
	\includegraphics[width=0.35\linewidth]{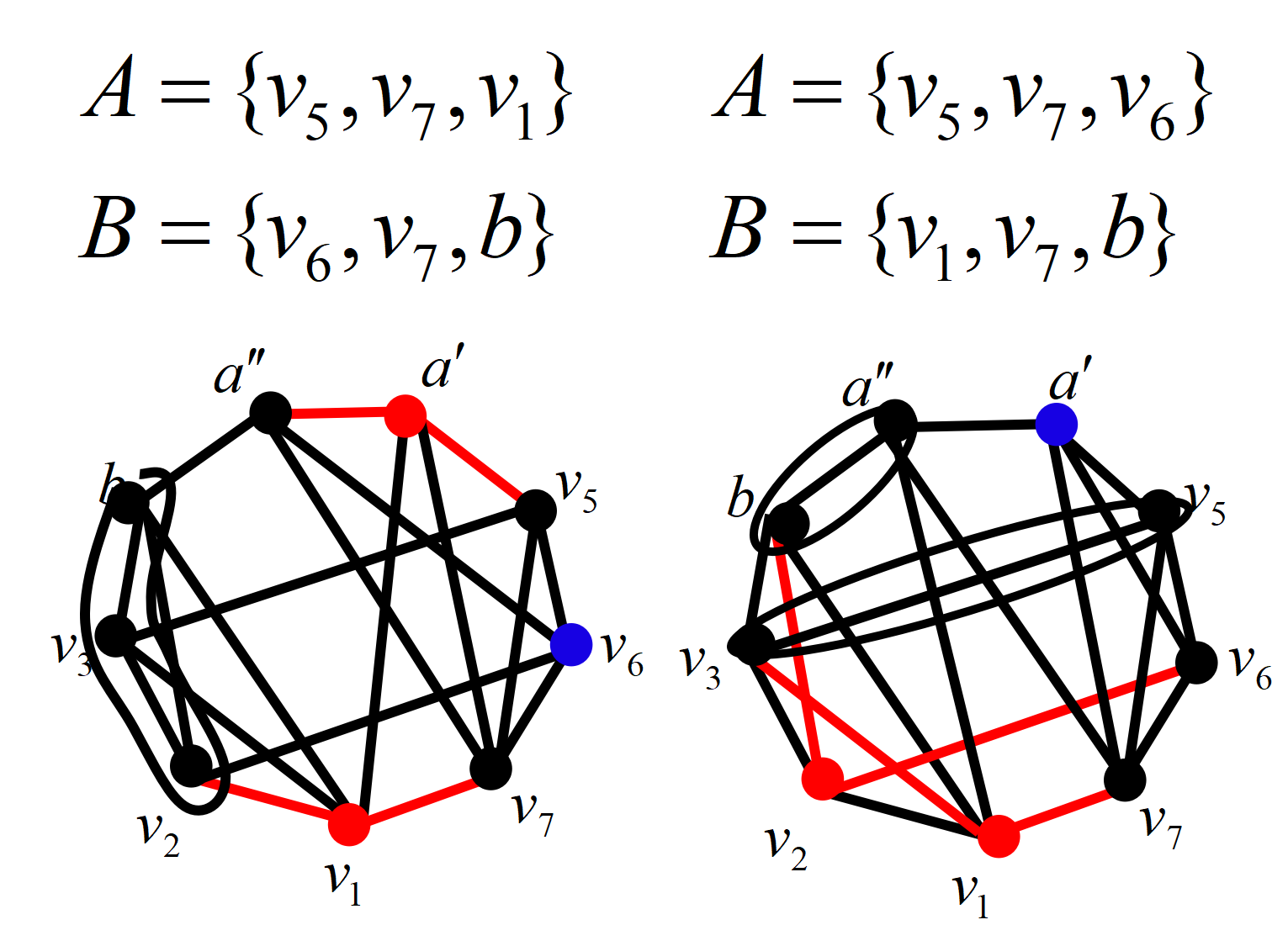}
	\caption{Two graphs in Case 2.3}
	\label{fig:21}
\end{figure}

{\bf Case 2.4.}  $A$ $\cap$ $B$ = \{$v_5$\}

	  	 Since $v_5$$\in$$B$, $b$$\in$$B$, one of $v_1$, $v_6$ and $v_7$ must belong to $B$. First, if $v_1$ $\in$ $B$, then $v_6$ and $v_7$ must belong to $A$ in the minimal case. Second, if $v_6$ $\in$ $B$, then $v_1$ and $v_7$ must belong to $A$ in the minimal case. Third, if $v_7$ $\in$ $B$, then $v_1$ and $v_6$ must belong to $A$ in the minimal case. It is easy to check that all the three resulting graphs contain $ K_{3,3}+v $ as a minor (see Figure \ref{fig:22}).

\begin{figure}[!htb]
	\centering
	\includegraphics[width=0.5\linewidth]{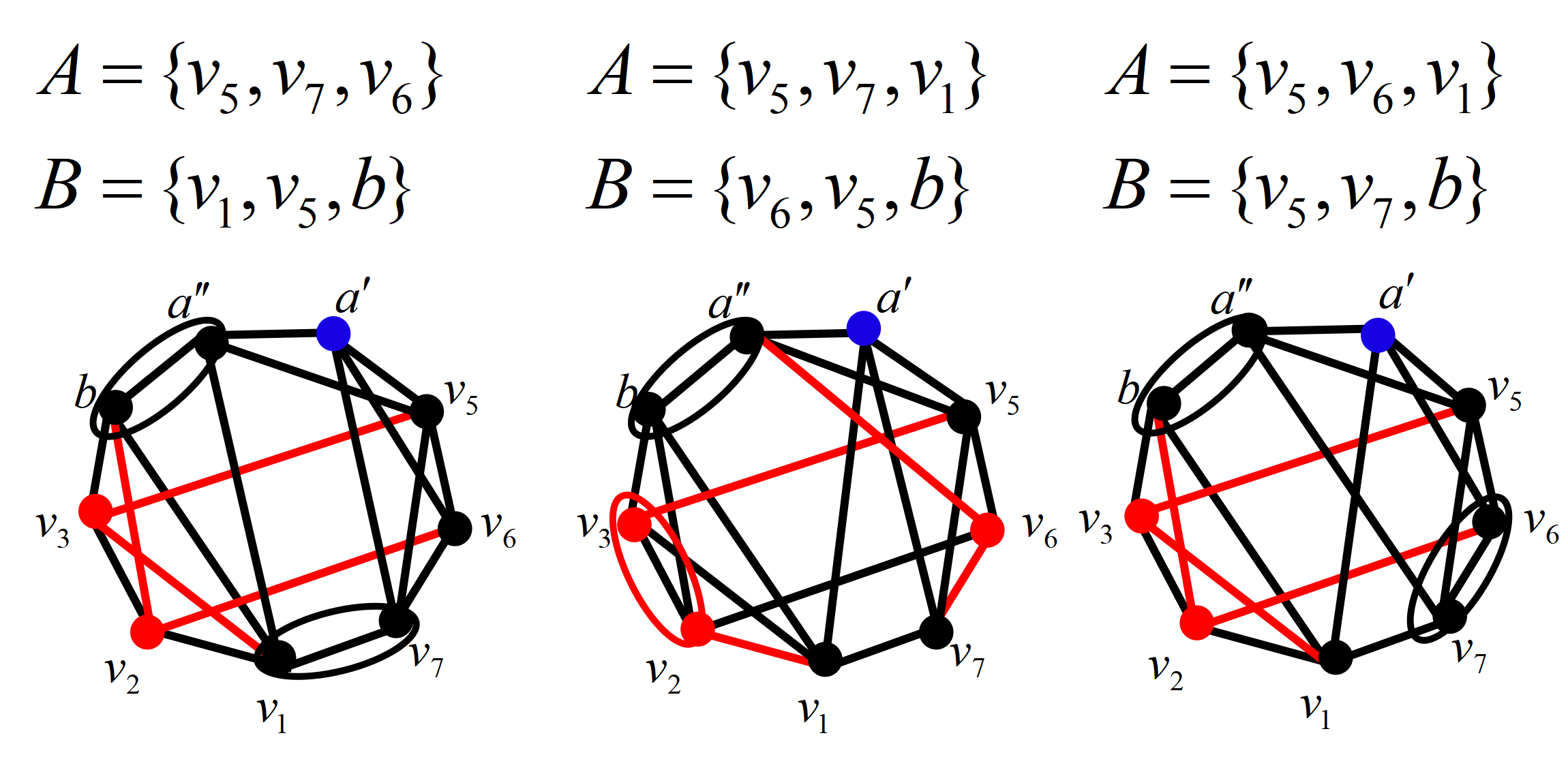}
	\caption{Three graphs in Case 2.4}
	\label{fig:22}
\end{figure}

{\bf Case 2.5.}  $A$ $\cap$ $B$ = \{$b$\}

Similarly to Case 2.4. We obtain three graphs which contain $ K_{3,3}+v $ as a minor as follows (see Figure \ref{fig:23}).

\begin{figure}[!htb]
	\centering
	\includegraphics[width=0.5\linewidth]{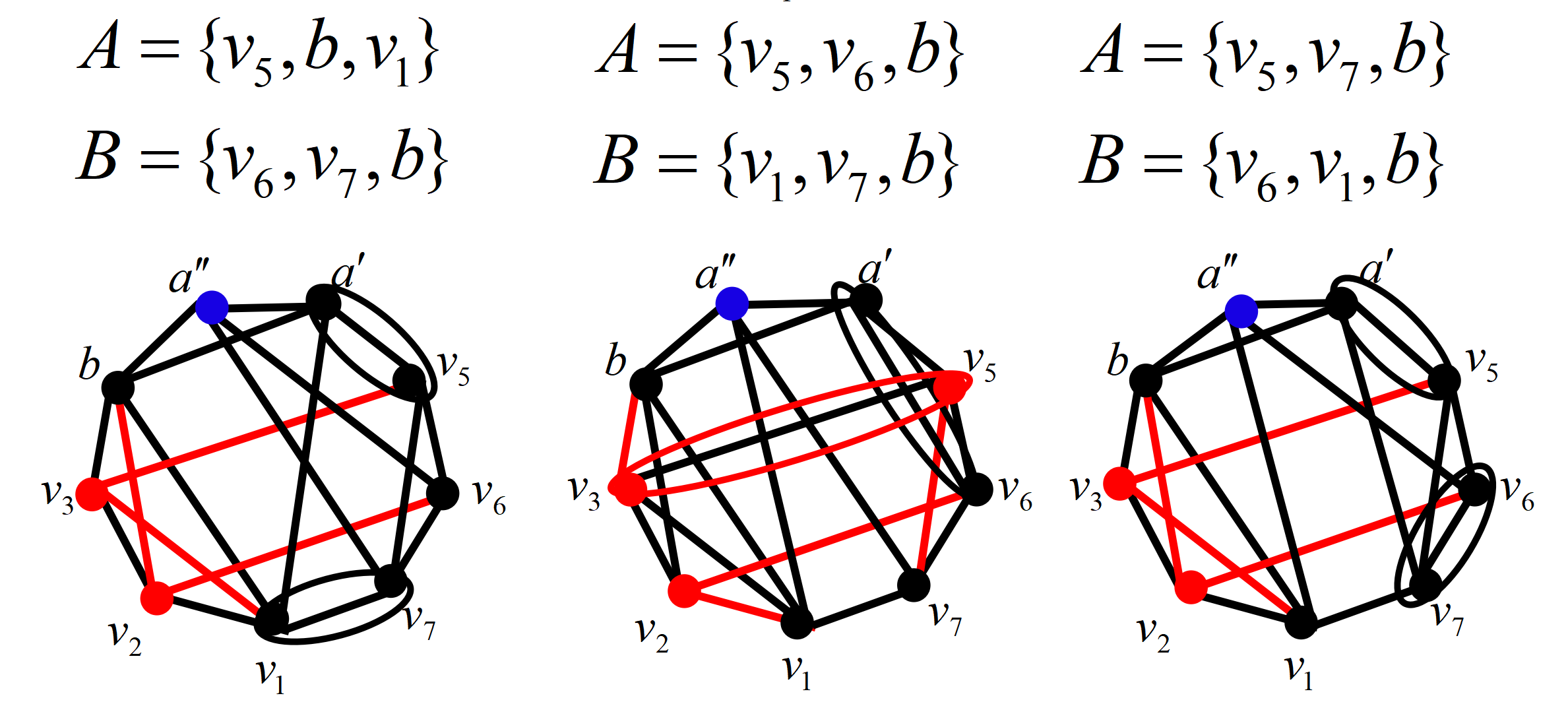}
	\caption{Three graphs in Case 2.5}
	\label{fig:23}
\end{figure}

	In all minimal cases, the new graph $G^{'}$ generated by splitting the vertices of $\Gamma_3$ contains  $ K_{3,3}+v $. Hence all the other graphs obtained by 4-splitting the vertices of $\Gamma_3$ contain $ K_{3,3}+v $ as a minor.

		\end{proof}
	
	 \begin{lem}\label{lem3.8}
	The graph obtained by adding an edge to $DW_5$ contains $ K_{3,3}+v $.
		
	\end{lem}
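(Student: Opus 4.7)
The plan is straightforward. First, observe that in $DW_5$ every pair of vertices is already adjacent except the non-consecutive pairs on the outer $5$-cycle $v_1v_2v_3v_4v_5$, since the two hubs $u,w$ are mutually adjacent and each is joined to every cycle vertex. Hence any edge added to $DW_5$ must be one of the five chords of that cycle, and by the dihedral symmetry of $DW_5$ all five chords are equivalent. It therefore suffices to treat a single representative, which I would take to be $e=v_1v_3$.

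For $DW_5+v_1v_3$ I intend to exhibit $K_{3,3}+v$ directly as a spanning subgraph, so that no contractions are needed. Take the $K_{3,3}$ bipartition to be $A=\{u,v_3,v_5\}$ and $B=\{w,v_1,v_4\}$; designate $v_2$ as the extra vertex $v$ of $K_{3,3}+v$; and use $u\,w\,v_3\,v_1\,u$ as the $4$-cycle in the $K_{3,3}$-part to which $v_2$ attaches. The verification is then a short check: the nine required $A$--$B$ edges $uw,\ uv_1,\ uv_4,\ v_3w,\ v_3v_1,\ v_3v_4,\ v_5w,\ v_5v_1,\ v_5v_4$ all exist in $DW_5+v_1v_3$ (eight of them as hub--cycle or cycle--cycle edges of $DW_5$, and the ninth $v_3v_1$ supplied by the new chord $e$), and the four edges $v_2u,\ v_2w,\ v_2v_1,\ v_2v_3$ realising $v_2$'s adjacencies in $K_{3,3}+v$ are all either hub--cycle or cycle edges already present.

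The main, and essentially only, conceptual step is to orient the embedding so that the added chord $v_1v_3$ is forced to serve as a missing $K_{3,3}$-adjacency; once this is arranged, the abundant adjacencies supplied by the two hubs and by the outer cycle make the remaining checks automatic. In the embedding above, $v_3v_1$ is the unique edge of the target $K_{3,3}+v$ that does not already lie in $DW_5$, so the added edge is used in exactly one place, which is consistent with the fact that $DW_5$ itself contains no such minor. No further work is required beyond the routine edge-by-edge verification.
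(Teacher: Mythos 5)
Your proof is correct. The symmetry reduction is valid: in $DW_5$ the two hubs are adjacent to each other and to every cycle vertex, so the only non-edges are the five chords of the outer $5$-cycle, and the dihedral symmetry of $DW_5$ (which fixes the hub pair setwise and acts as $D_5$ on the cycle) is transitive on these chords, so the single representative $v_1v_3$ suffices. Your explicit embedding also checks out: with $A=\{u,v_3,v_5\}$, $B=\{w,v_1,v_4\}$ all nine $A$--$B$ edges lie in $DW_5+v_1v_3$ (with $v_3v_1$ the added chord, and note $uw$ is the hub--hub edge, present by the definition of a double wheel rather than being a hub--cycle or cycle edge as you describe), $uwv_3v_1u$ is indeed an alternating $4$-cycle of this $K_{3,3}$, and $v_2$ is adjacent to all four of its vertices; hence $K_{3,3}+v$ is a spanning subgraph, so certainly a minor. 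This is the same basic strategy as the paper --- reduce by symmetry, then verify containment --- but your execution differs in two useful ways: the paper works with the labelling inherited from splitting $DW_4$ and therefore distinguishes three representative added edges, verifying each only by reference to figures showing a minor, whereas you exploit the full automorphism group to get a single case and certify containment by an explicit subgraph embedding that uses the new chord exactly once and needs no contractions. Your version is more self-contained and checkable; the paper's is shorter on symmetry bookkeeping but leans entirely on its figures.
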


\begin{proof}
	By symmetry, we only need to consider adding $av_5$, $ bv_2$ or $ v_2v_5$. We denote the resulting graphs by $DW_5^1$, $DW_5^2$ and $DW_5^3$ as shown in Figure \ref{fig:24}. It is straightforward to verify that $DW_5^1$, $DW_5^2$ and $DW_5^3$ all contain $K_{3,3}+v$ as a minor.
	
\begin{figure}[!htb]
	\centering
	\includegraphics[width=0.7\linewidth]{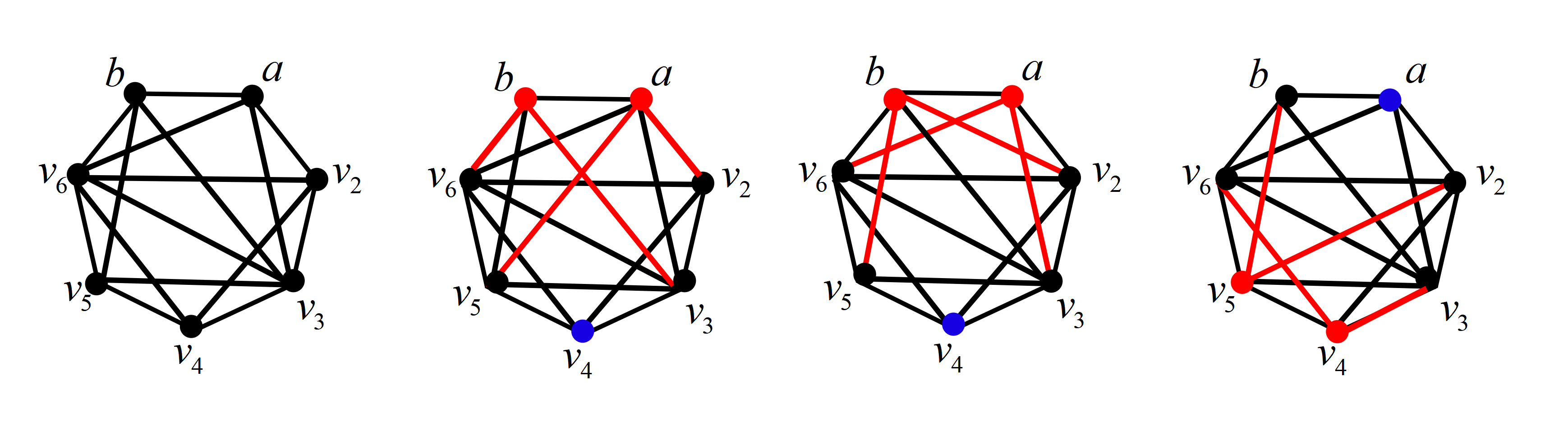}
	\caption{$DW_5$, $DW_5^1$, $DW_5^2$, $DW_5^3$}
	\label{fig:24}
\end{figure}

	\end{proof}

	\begin{lem}\label{lem3.16}
	The only $ K_{3,3}+v $-$ minor $-$ free $ graph is  $DW_5$ by 4-splitting a vertex of degree 4 in $DW_4$. 
\end{lem}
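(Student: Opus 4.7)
The plan is to mirror the structure of Lemma~\ref{lem3.2}: reduce by symmetry, enumerate the minimal $4$-splits, and then extend to non-minimal cases. Every vertex of degree $4$ in $DW_4$ lies on the rim cycle $v_1v_2v_3v_4$, and the rotational symmetry of $DW_4$ maps any of these four vertices to $v_1$, so I may assume $v_1$ is the vertex being split. Writing $v_5,v_6$ for the two hubs, one has $N_{DW_4}(v_1)=\{v_2,v_4,v_5,v_6\}$. In the minimal case $|A|=|B|=3$ one necessarily has $|A\cap B|=2$, and the symmetries of $DW_4$ fixing $v_1$ (the reflection $v_2\leftrightarrow v_4$ and the hub swap $v_5\leftrightarrow v_6$) reduce the six possible intersections to three orbit representatives: $\{v_5,v_6\}$, $\{v_2,v_4\}$, and the ``mixed'' pair $\{v_2,v_5\}$.

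Taking $A\cap B=\{v_5,v_6\}$ with $A=\{v_2,v_5,v_6\}$ and $B=\{v_4,v_5,v_6\}$ makes $a$-$v_2$-$v_3$-$v_4$-$b$-$a$ a $5$-cycle with both $v_5$ and $v_6$ adjacent to all five of its vertices and to each other; the resulting graph is thus isomorphic to $DW_5$. For each of the other two orbits I expect to exhibit $K_{3,3}+v$ as a subgraph of the $7$-vertex graph that results. For instance, when $A\cap B=\{v_2,v_4\}$ with $A=\{v_2,v_4,v_5\}$ and $B=\{v_2,v_4,v_6\}$, the triples $\{a,v_3,v_6\}$ and $\{v_2,v_4,v_5\}$ induce a $K_{3,3}$, and $b$ is adjacent to all four vertices of its $4$-cycle $a$-$v_2$-$v_6$-$v_4$-$a$; the mixed-orbit case admits an entirely analogous embedding (the apex $b$ plays the same role with a slightly different $4$-cycle).

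For non-minimal splits ($|A|\ge 4$ or $|B|\ge 4$), the resulting graph is obtained from one of the three minimal graphs above by adding further edges of the form $av_i$ or $bv_j$. Any such edge addition to either of the two $K_{3,3}+v$-containing graphs preserves the minor, and by Lemma~\ref{lem3.8} every one-edge augmentation of $DW_5$ already contains $K_{3,3}+v$. Combining these observations, $DW_5$ is the unique $K_{3,3}+v$-minor-free graph arising from a $4$-split of a degree-$4$ vertex of $DW_4$. The main work is not conceptually hard but is bookkeeping: correctly enumerating the three orbits of minimal splits and pinning down the $K_{3,3}+v$ subgraphs in the two non-$DW_5$ cases.
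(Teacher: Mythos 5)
Your proposal is correct and follows essentially the same route as the paper: reduce by symmetry to splitting one degree-4 rim vertex, enumerate the minimal splits (your three orbits determined by $A\cap B$ are a tidier version of the paper's seven pictured graphs), recognize the hub-intersection split as $DW_5$, exhibit $K_{3,3}+v$ in the remaining cases, and dispose of non-minimal splits as edge additions via Lemma~\ref{lem3.8}. Two cosmetic caveats only: the triples $\{a,v_3,v_6\}$ and $\{v_2,v_4,v_5\}$ contain a $K_{3,3}$ as a subgraph rather than inducing one (there are extra edges such as $v_3v_6$ and $v_2v_5$, which is harmless for a minor), and the asserted ``analogous'' mixed case does indeed work, e.g.\ with the same parts $\{a,v_3,v_6\}$, $\{v_2,v_4,v_5\}$, apex $b$ and 4-cycle $a\,v_2\,v_6\,v_5$.
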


\begin{proof}
		Let \{$v_1,v_2,\ldots,v_6$\} be vertices of $DW_4$ (shown in Figure \ref{fig:02}). By symmetry, we consider the 4-split of the vertex $v_1$. Note that $d(v_1)=4$. Similarly to the Lemma \ref{lem3.3}, we obtain seven graphs which contain $K_{3,3}+v$ as a minor with the exception of  $DW_4$ (see in Figure \ref{fig:34}).
		
		\begin{figure}[!htb]
			\centering
			\includegraphics[width=0.7\linewidth]{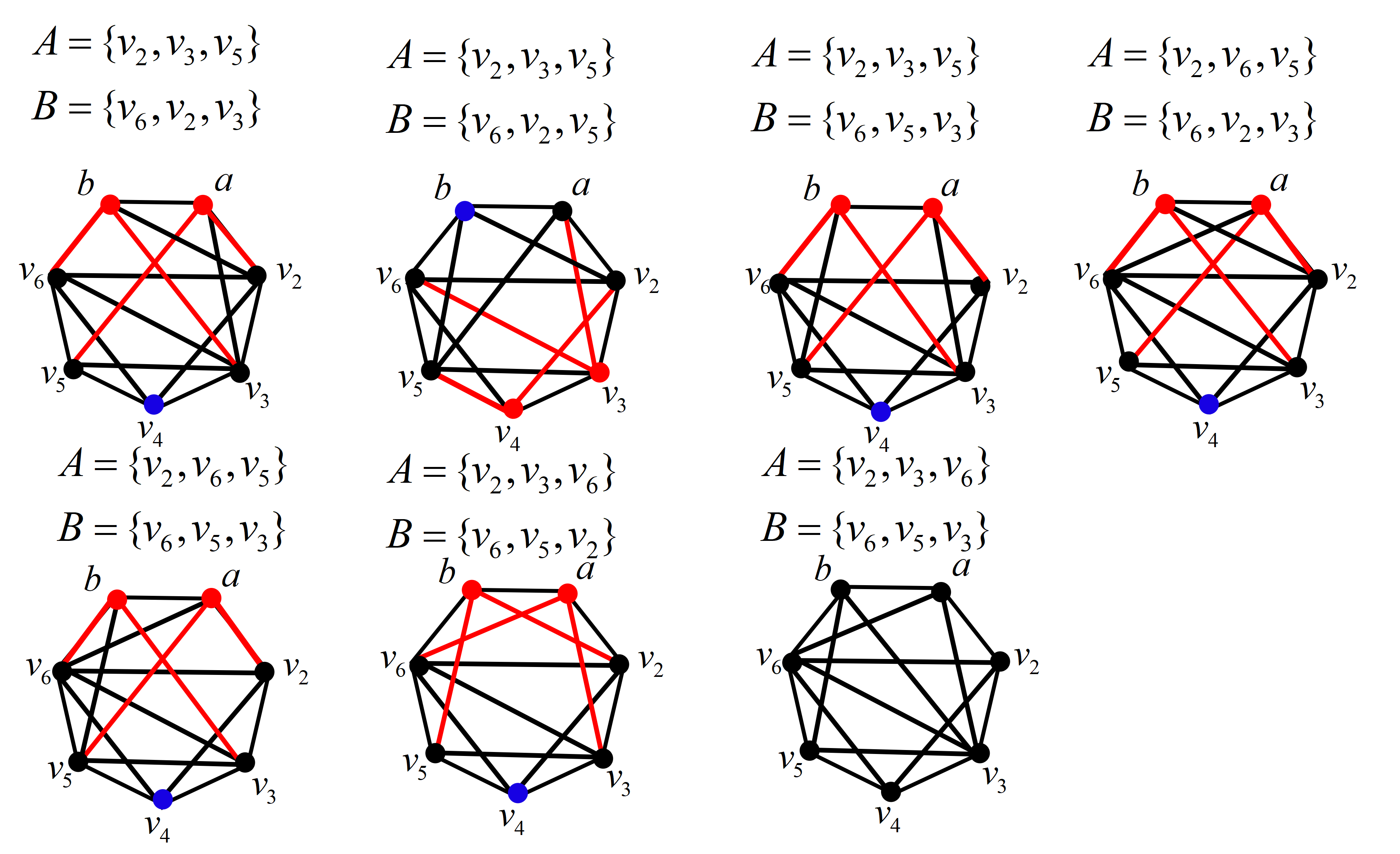}
			\caption{Seven graphs obtained by splitting the vertex $v_1$ in $DW_4$. }
			\label{fig:34}
		\end{figure}
	
The graph obtained in non-minimal cases is equivalent to add edges $av_i$ or $bv_j$ to the above graphs mentioned in Figure \ref{fig:34}. Hence by Lemma \ref{lem3.8}, all the other new graphs obtained by 4-splitting the vertex $v_1$ in $DW_4$ contain $ K_{3,3}+v $ as a minor.	
	
	\end{proof}

 \begin{lem}\label{lem3.9}
	The graph obtained by adding an edge to $DW_n$ $($$n$ $\ge$ 6$)$ contains $ K_{3,3}+v $.
	
\end{lem}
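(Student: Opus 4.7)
The plan is to reduce the problem to Lemma \ref{lem3.8} by a contraction argument. Since both hubs of $DW_n$ are adjacent to each other and to every cycle vertex, the only non-edges of $DW_n$ are chords of the cycle $C_n = v_1 v_2 \cdots v_n v_1$. By the dihedral symmetry of $DW_n$ I may therefore assume that the added edge is a chord $e = v_1 v_k$ for some $k$ with $3 \le k \le \lfloor n/2 \rfloor + 1$. This chord splits $C_n$ into two arcs $P_1 = v_1 v_2 \cdots v_k$ of length $k-1$ and $P_2 = v_k v_{k+1} \cdots v_n v_1$ of length $n-k+1$; the hypothesis $n \ge 6$ together with $k \le \lfloor n/2 \rfloor + 1$ gives $k-1 \ge 2$ and $n-k+1 \ge \lceil n/2 \rceil \ge 3$.

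Next I would contract cycle edges on the interiors of the two arcs down to lengths $2$ and $3$ respectively. Contracting a cycle edge $v_i v_{i+1}$ of $DW_m$ merges two cycle vertices that share the same pair of hub-neighbors, so the result is $DW_{m-1}$; and provided the contracted edge is not incident to either endpoint of the chord, the chord $v_1 v_k$ is preserved and never becomes parallel to a cycle edge. Performing $k-3$ such contractions in the interior of $P_1$ and $n-k-2$ in the interior of $P_2$ (both numbers are nonnegative by the inequalities above) yields $DW_5$ together with a chord joining two cycle vertices at distance $2$ on the resulting pentagon.

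Finally, by the $5$-fold rotational symmetry of the cycle of $DW_5$ every chord of that pentagon lies in one orbit, so the contracted graph is isomorphic to $DW_5 + v_2 v_5$, which is handled in Lemma \ref{lem3.8}. That lemma produces a $K_{3,3}+v$-minor, and by transitivity of the minor relation the same minor is present in $DW_n + e$. The only delicate point is arranging the contractions so that the chord is never identified with a cycle edge; the inequalities $k-1 \ge 2$ and $n-k+1 \ge 3$ are exactly what make this possible, and this is precisely where the hypothesis $n \ge 6$ enters.
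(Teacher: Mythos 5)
Your proof is correct and takes essentially the same route as the paper: both arguments reduce $DW_n+e$ to a copy of $DW_5$ plus one added edge by contracting cycle edges and then invoke Lemma \ref{lem3.8}. The only difference is presentational --- the paper distinguishes cases by the position of the added chord and points to the three graphs $DW_5^1$, $DW_5^2$, $DW_5^3$ in its figures, whereas you give a single uniform contraction with the explicit arc-length bookkeeping that the paper leaves to the pictures.
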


\begin{proof}
	 Let \{$v_1,v_2,\ldots,v_n$\} be vertices of $DW_n$ (shown in Figure \ref{fig:25}). There are two cases for the edge $v_iv_j$.
	 
	    {\bf Case 1.}  $v_i$ = $v_1$.
	
		{\bf Case 1.1.}  $j$ = 3.
		
		The new graph $G^{'}$ generated by adding $v_1v_3$ to $DW_n$ contains $DW_5^1$ as a minor (see $DW_n^1$ in Figure \ref{fig:25}).
		
		{\bf Case 1.2.}  4 $\leq$ $j$ $\leq$ $n - 3$ .
		
		The new graph $G^{'}$ generated by adding $v_1v_j$ to $DW_n$ contains $DW_5^3$ as a minor (see $DW_n^2$ in Figure \ref{fig:25}).
			
		By symmetry, the case of $v_iv_{n-2}$ (2 $\leq$ $i$ $\leq$ $n-4$) is similar to the above.
			
		{\bf Case 2.}  $v_i$ $\neq$ $v_1$ and $v_j$ $\neq$ $v_{n-2}$.
			
		The new graph $G^{'}$ generated by adding $v_iv_j$ to $DW_n$ contains $DW_5^2$ as a minor (see $DW_n^3$ in Figure \ref{fig:25}).
			
		Thus, the graph obtained by adding an edge to $DW_n$ ($n$ $\ge$ 6) contains $ K_{3,3}+v $ as a minor.	
		
\begin{figure}[!htb]
	\centering
	\includegraphics[width=0.7\linewidth]{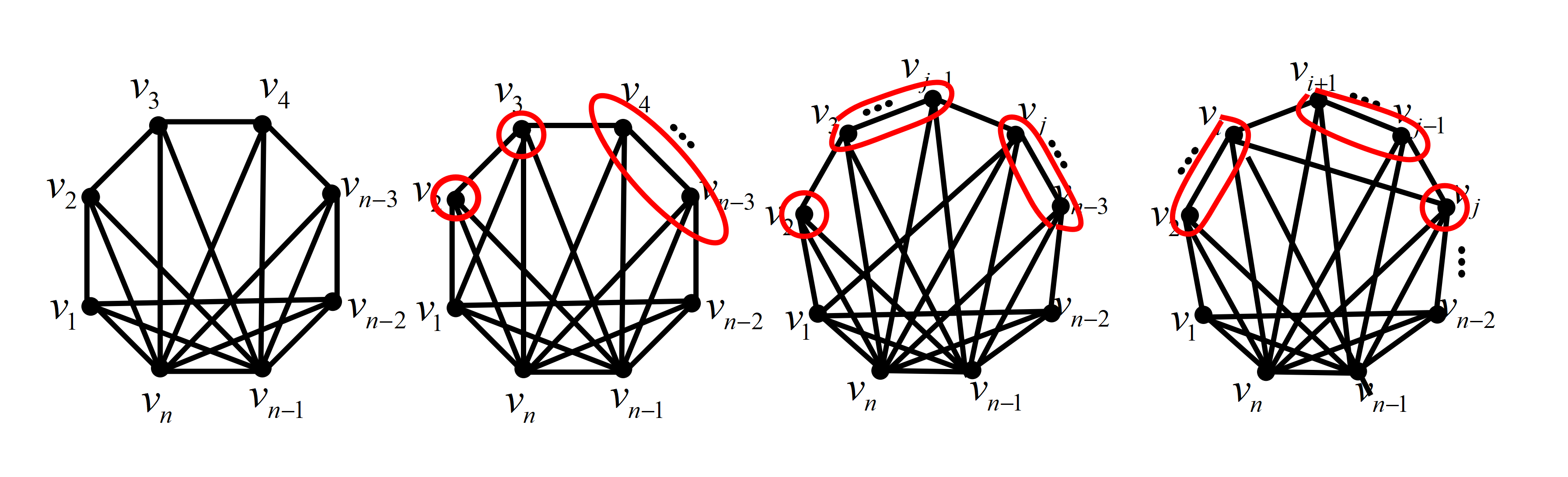}
	\caption{$DW_n$, $DW_n^1$, $DW_n^2$, $DW_n^3$}
	\label{fig:25}
\end{figure}

\end{proof}
\begin{lem}\label{lem3.10}
   Every 4-split of a vertex of degree 6 in $DW_5$ contains a $ K_{3,3}+v $-$ minor$.
	
\end{lem}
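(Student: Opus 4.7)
The plan is to mirror the reductions used in Lemmas \ref{lem3.2} and \ref{lem3.5}. The graph $DW_5$ consists of the rim 5-cycle $v_1v_2v_3v_4v_5$ together with two hubs $v_6, v_7$, each adjacent to the other and to every rim vertex, so the vertices of degree $6$ are exactly $v_6$ and $v_7$. The automorphism of $DW_5$ swapping the two hubs lets me restrict attention to 4-splits of $v_6$, and I write $a, b$ for the replacement vertices with $N(a) = A \cup \{b\}$ and $N(b) = B \cup \{a\}$. First I would restrict to the minimal case $|A| = |B| = 3$, in which $A \cap B = \emptyset$ and $\{A, B\}$ partitions $N(v_6) = \{v_1, v_2, v_3, v_4, v_5, v_7\}$. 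Using the symmetry $a \leftrightarrow b$ I may assume $v_7 \in A$, so $A \setminus \{v_7\}$ is a 2-subset of the rim; the dihedral action on the rim has exactly two orbits on such 2-subsets (an adjacent pair and a pair at cyclic distance $2$), so up to symmetry only two minimal subcases remain.

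In Case 1, $A = \{v_7, v_1, v_2\}$ and $B = \{v_3, v_4, v_5\}$: I would contract the rim edge $v_1v_5$ into a new vertex $w$ and verify that the bipartition $\{w, v_3, a\}$ versus $\{v_2, v_7, b\}$ induces a $K_{3,3}$, and that $v_4$ is adjacent to the four vertices $w, v_7, v_3, b$, which form a 4-cycle in this $K_{3,3}$; hence $v_4$ serves as the apex of a $K_{3,3}+v$ minor. In Case 2, $A = \{v_7, v_1, v_3\}$ and $B = \{v_2, v_4, v_5\}$: I would contract $v_1v_2$ into $w$ and verify that $\{v_3, b, v_7\}$ versus $\{w, v_4, a\}$ induces a $K_{3,3}$, with $v_5$ adjacent to the 4-cycle on $\{w, b, v_4, v_7\}$. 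Both verifications are short adjacency checks.

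For non-minimal 4-splits ($A \cap B \neq \emptyset$) I would proceed as in Lemmas \ref{lem3.2}, \ref{lem3.5} and \ref{lem3.7}: for each $v_i \in A \cap B$ one of the edges $av_i, bv_i$ can be deleted to reduce $|A \cap B|$ by one while maintaining $|A|, |B| \geq 3$, and iterating exhibits a minimal 4-split of $v_6$ as a spanning subgraph. By the two cases above this spanning subgraph already contains $K_{3,3}+v$ as a minor, and hence so does the original 4-split.

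The hard part will be selecting the right contracted edge and the right bipartition in each minimal case, since an unlucky choice leaves a bipartition missing exactly one edge of $K_{3,3}$. My heuristic is that the contracted rim edge should bridge $A$ and $B$ so that the merged vertex is adjacent to both $a$ and $b$, and that $a$ and $b$ should be placed in opposite color classes so that the split edge $ab$ automatically supplies one of the nine $K_{3,3}$ edges; this narrows the search to a small set of candidate bipartitions for each case.
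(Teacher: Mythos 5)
Your proof is correct and follows essentially the same strategy as the paper's: reduce every 4-split of a degree-6 vertex (WLOG $v_6$) to the minimal splits with $|A|=|B|=3$, verify those contain $K_{3,3}+v$, and handle $A\cap B\neq\emptyset$ by the observation that such a split contains a minimal split after deleting edges $av_i$ or $bv_i$ (the paper states the same reduction in the reverse direction, as adding edges to the minimal-case graphs). The only difference is presentational: the paper checks all ten minimal partitions against figures, while you use the hub-swap and rim-dihedral automorphisms to cut this to two cases and supply explicit contraction-plus-bipartition certificates, which I verified are correct.
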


\begin{proof}

	Let \{$v_1,v_2,\ldots,v_7$\} be vertices of $DW_5$ (shown in Figure \ref{fig:26}). By symmetry, we split the vertex $v_6$. We first consider the minimal case for $A$ and $B$, where $A$, $B$ are the subsets of the $N_G(v_6)$, $A \cup B$=$N_G(v_6)$. Suppose that $a, b$ are the new vertices obtained by 4-splitting the vertex $v_6$. Since $|N_G(v_6)|$ = 6, the process of 4-splitting the vertex $v_6$ is similar to that of Lemma \ref{lem3.5}. Hence we have three cases for $A$ and $B$. The first case is that one of $A$ $\cap$ \{$v_5, v_7$\} and $B$ $\cap$ \{$v_5, v_7$\} is $\emptyset$. Without loss of generality, we assume that $B$ $\cap$ \{$v_5, v_7$\} is $\emptyset$. The second case is that $A$ $\cap$ \{$v_5, v_7$\} $\neq$ $\emptyset$, $B$ $\cap$ \{$v_5, v_7$\} $\neq$ $\emptyset$ and $A$ $\cap$ $B$ $=$ $\emptyset$. The third case is that $A$ $\cap$ \{$v_5, v_7$\} $\neq$ $\emptyset$, $B$ $\cap$ \{$v_5, v_7$\} $\neq$ $\emptyset$ and $A$ $\cap$ $B$ $\neq$ $\emptyset$.
	
	{\bf Case 1.}  $B$ $\cap$ \{$v_5, v_7$\} = $\emptyset$.
	 
	Similarly to the Case 1 in Lemma \ref{lem3.5}, we have four graphs. It is not difficult to see that all the four resulting graphs contain $ K_{3,3}+v $ as a minor (see Figure \ref{fig:26}).
		
		\begin{figure}[!htb]
			\centering
			\includegraphics[width=0.7\linewidth]{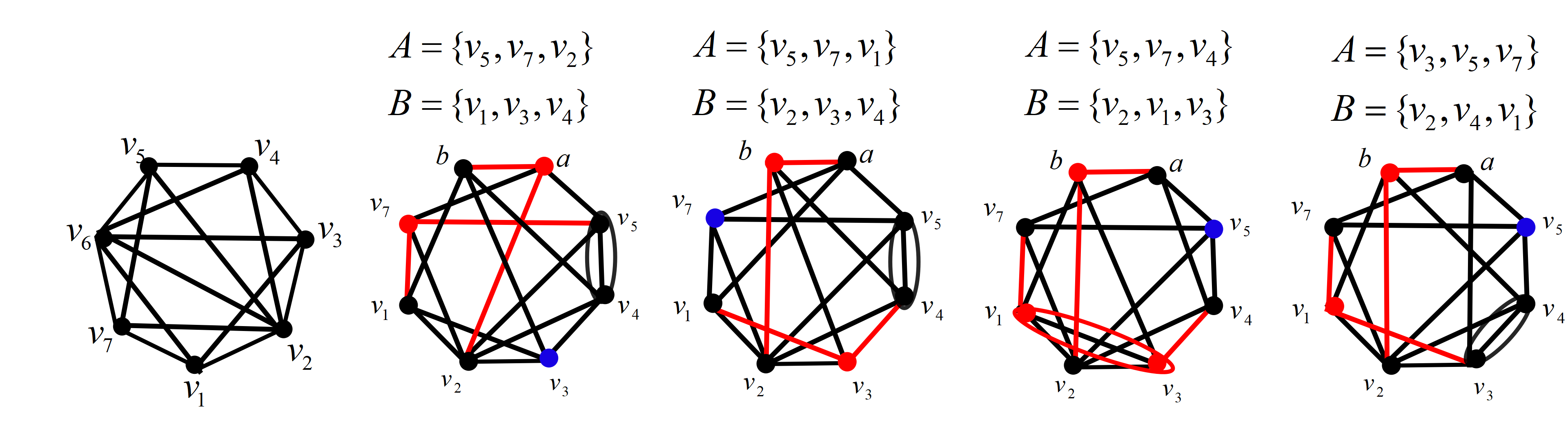}
			\caption{$DW_5$ and four graphs in Case 1}
			\label{fig:26}
		\end{figure}
	
	{\bf Case 2.}  $A$ $\cap$ \{$v_5, v_7$\} $\neq$ $\emptyset$, $B$ $\cap$ \{$v_5, v_7$\} $\neq$ $\emptyset$ and $A$ $\cap$ $B$ $=$ $\emptyset$
	
	We consider the minimal case for $|A|=|B|=3$ and assume that $v_5$$\in$$A$, $v_7$$\in$$B$ without loss of generality. By an argument argument similar to that of Case 2 in Lemma \ref{lem3.5}, we have six graphs. It is easy to check that all the resulting graphs contain $ K_{3,3}+v $ as a minor (see Figure \ref{fig:27}).
	
\begin{figure}[!htb]
	\centering
	\includegraphics[width=0.5\linewidth]{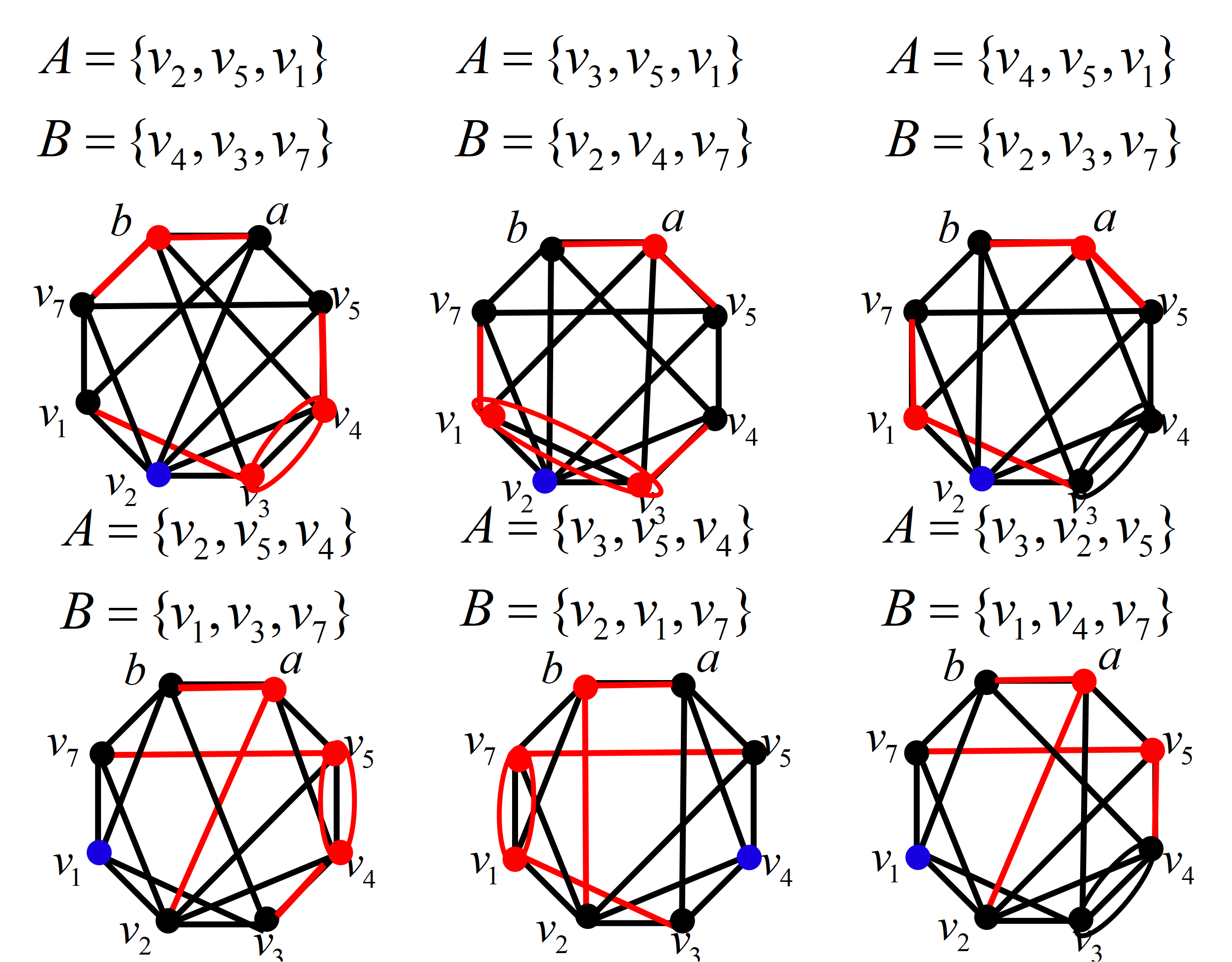}
	\caption{Six graphs of Case 2}
	\label{fig:27}
\end{figure}

 {\bf Case 3.}  $A$ $\cap$ \{$v_5, v_7$\} $\neq$ $\emptyset$, $B$ $\cap$ \{$v_5, v_7$\} $\neq$ $\emptyset$ and $A$ $\cap$ $B$ $\neq$ $\emptyset$.
 
 	We consider the minimal case for $A$ and $B$. We assume that $v_5$$\in$$A$, $v_7$$\in$$B$ without loss of generality.

{\bf Case 3.1.}  $A$ $\cap$ $B$ $=$ $\{v_1\}$

Since $|N_G(v_6)|$=6 and $A$ $\cap$ $B$ $=$ $\{v_1\}$, the set $A$ or set $B$ contains 4 vertices in the minimal case. The graph generated in this case is equivalent to add the edge $av_1$ or $bv_1$ to the above graphs mentioned in Case 2. Thus, the graphs all contain $ K_{3,3}+v $ as a minor. The cases of $A$ $\cap$ $B$ $=$ $\{v_2\}$, $A$ $\cap$ $B$ $=$ $\{v_3\}$ and $A$ $\cap$ $B$ $=$ $\{v_4\}$ are similar to the above.

{\bf Case 3.2.}  $A$ $\cap$ $B$ $=$ $\{v_5\}$

Note that $d(v_6)=6$ and $A$ $\cap$ $B$ $=$ $\{v_5\}$. One of the set $A$ and set $B$ contains 4 vertices in the minimal case. The graph generated in this case is equivalent to add the edge $av_5$ or $bv_5$ to the above graphs mentioned in Case 1 or Case 2. Thus, the graphs all contain $K_{3,3}+v$ as a minor. The case of $A$ $\cap$ $B$ $=$ $\{v_7\}$ is similar to the above.

In all minimal cases, the new graph $G^{'}$ generated by 4-splitting a vertex of degree 6 in $DW_5$ contains $ K_{3,3}+v $. Hence all the other graphs obtained by 4-spliting a vertex of degree 6 in $DW_5$ contain $ K_{3,3}+v $ as a minor.

\end{proof}

Let $\cal F$ =\{ $F$ : $F$ is obtained by 4-spiliting a vertex of degree 6 in $DW_5$ \}.

\begin{lem}\label{lem3.11}
	The graph obtained by splitting the vertex of degree $n-1$ in $DW_n$ $($$n$ $\geq$ 6$)$ contains a $ K_{3,3}+v $-$ minor$. 
	
\end{lem}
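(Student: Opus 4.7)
The plan is to reduce the claim to Lemma~\ref{lem3.10} by contracting cycle edges. Let $v$ be the hub of $DW_n$ of degree $n+1$ that is being $4$-split, let $u$ denote the other hub, and write the cycle as $v_1v_2\cdots v_n$. The split replaces $v$ by adjacent vertices $a,b$ with $N(a)=A\cup\{b\}$, $N(b)=B\cup\{a\}$, $A\cup B=\{u,v_1,\ldots,v_n\}$ and $|A|,|B|\ge3$. Put $A'=A\cap\{v_1,\ldots,v_n\}$ and $B'=B\cap\{v_1,\ldots,v_n\}$.

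I partition the cycle into five consecutive arcs $I_1,\ldots,I_5$ and contract each $I_j$ to a single vertex $w_j$. The resulting graph $H$ is exactly a $4$-split of the hub $v$ of $DW_5$ (with cycle $w_1\cdots w_5$ and hubs $u,v$); its split sets are
\begin{equation*}
A''=\{w_j:I_j\cap A'\neq\emptyset\}\cup(A\cap\{u\}), \quad B''=\{w_j:I_j\cap B'\neq\emptyset\}\cup(B\cap\{u\}),
\end{equation*}
and $A''\cup B''=\{u,w_1,\ldots,w_5\}=N_{DW_5}(v)$. Provided $|A''|,|B''|\ge 3$, the graph $H$ is a valid $4$-split of the hub of $DW_5$, and Lemma~\ref{lem3.10} then supplies a $K_{3,3}+v$-minor in $H$, which lifts back to one in the original split.

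To arrange $|A''|,|B''|\ge 3$, set $k=3$ if $u\notin A$ and $k=2$ if $u\in A$, and define $k'$ analogously for $B$. Then I need $A'$ to meet at least $k$ arcs and $B'$ at least $k'$ arcs; note $|A'|\ge k$, $|B'|\ge k'$, and $k+k'\le 5$ because $u\in A\cup B$. First place $k$ of the five cuts to put $k$ chosen elements of $A'$ into $k$ distinct arcs; then use the remaining $5-k\ge k'-1$ cuts to subdivide a $B'$-rich arc so that $B'$ ends up in at least $k'$ arcs. Because placing further cuts only subdivides existing arcs, the $A'$-coverage achieved in the first step is preserved. The hypothesis $n\ge 6$ guarantees that the cycle has at least five edges to accommodate the cuts. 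Non-minimal splits (with $|A|>3$ or $|B|>3$) are handled as in Lemmas~\ref{lem3.3}, \ref{lem3.5} and \ref{lem3.10}: they amount to adding edges $av_i$ or $bv_j$ to a minimal configuration already known to have a $K_{3,3}+v$-minor, so the minor persists.

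The main obstacle I anticipate is the combinatorial step of realizing both coverage requirements within a single five-cut partition; this rests on the inequality $k+k'\le 5$, and a short case analysis on whether $u$ lies in one or both of $A,B$ makes the construction fully explicit.
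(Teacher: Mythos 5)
Your proposal is correct and follows essentially the same route as the paper: the paper also reduces the split of the big hub to a member of $\cal F$, i.e.\ to a $4$-split of the degree-$6$ hub of $DW_5$ (Lemma~\ref{lem3.10}), by contracting cycle edges. Your arc-partition argument, with the bookkeeping $k+k'\le 5$ ensuring $|A''|,|B''|\ge 3$, simply makes explicit the contraction step that the paper only indicates by ``contrasting the structure of $DW_5\backslash v_7$ and $DW_n\backslash v_n$''.
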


\begin{proof}
Let \{$v_1,v_2,\ldots,v_n$\} be vertices of $DW_n$ (shown in Figure\ref{fig:30}).	By symmetry, we consider the 4-splitting of the vertex $v_n$. By contrasting the structure of $DW_5$$\backslash$$v_7$ and $DW_n$$\backslash$$v_n$ (see Figure \ref{fig:30}), we conclude that the new graph $G^{'}$ generated by 4-splitting the vertex of degree $n-1$ in $DW_n$ contains a graph $F$ as a minor. Thus, $G^{'}$ must contain $K_{3,3}+v $ as a minor.

\begin{figure}[!htb]
	\centering
	\includegraphics[width=0.7\linewidth]{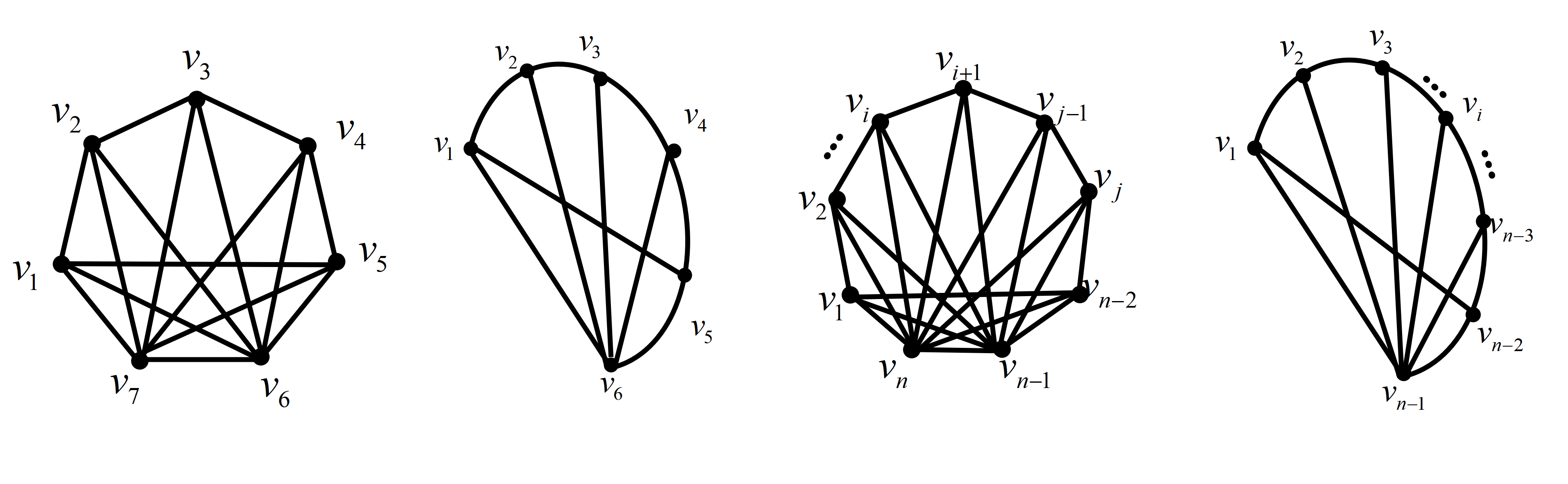}
	\caption{$DW_5$, $DW_5$$\backslash$$v_7$, $DW_n$, $DW_n$$\backslash$$v_n$}
	\label{fig:30}
\end{figure}

\end{proof}

	\begin{lem}\label{lem3.12}
	The graph obtained by 4-splitting the vertex of degree 4 in $DW_5$ contains a $ K_{3,3}+v $-$ minor$ with the exception of $DW_6$. 
\end{lem}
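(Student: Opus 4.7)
The plan follows the template of Lemma \ref{lem3.16} (the degree-4 case in $DW_4$) and Lemma \ref{lem3.3}. Label the vertices of $DW_5$ so that $v_1v_2v_3v_4v_5v_1$ is the rim and $v_6, v_7$ are the hubs; by the cyclic symmetry it suffices to 4-split $v_1$, which has $N(v_1)=\{v_2,v_5,v_6,v_7\}$. Because $|N(v_1)|=4$ and $|A|,|B|\ge 3$, in a minimal split ($|A|=|B|=3$) both $A$ and $B$ must meet every 2-subset of $N(v_1)$, in particular $\{v_2,v_5\}$. Using the reflection swapping $v_2\leftrightarrow v_5$ and $v_3\leftrightarrow v_4$ together with the $A\leftrightarrow B$ swap, I may assume $v_2\in A$ and $v_5\in B$; this leaves exactly seven valid pairs $(A,B)$ with $A\cup B=N(v_1)$, paralleling the enumeration in Lemma \ref{lem3.3}.

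Among these seven graphs, I would verify that the single choice $A=\{v_2,v_6,v_7\}$, $B=\{v_5,v_6,v_7\}$ yields $DW_6$: the new rim is $a\,v_2\,v_3\,v_4\,v_5\,b\,a$, and both $v_6,v_7$ remain adjacent to every rim vertex, which matches the definition of $DW_6$. Since $DW_6\in\mathcal{DW}$, it is $K_{3,3}+v$-minor-free by Theorem \ref{thm1.2}.

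For each of the remaining six minimal splits the goal is to exhibit an explicit $K_{3,3}+v$-minor, to be shown pictorially as in Lemmas \ref{lem3.2} and \ref{lem3.10}; this step, while routine in spirit, is the main source of case work and the principal obstacle of the lemma. One efficient route is to contract a single rim edge such as $v_3v_4$, producing a 7-vertex graph in which one can read off the bipartition $\{x_1,x_2,x_3\}\cup\{y_1,y_2,y_3\}$ of $K_{3,3}$ together with an apex $v$ joined to a 4-cycle; the precise assignments would be drawn out case by case.

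Finally, a non-minimal 4-split of $v_1$ (i.e.\ $|A\cap B|\in\{3,4\}$) is obtained from one of the seven minimal graphs by adding one or two edges of the form $av_i$ or $bv_j$. For the six cases that already contain a $K_{3,3}+v$-minor, minor-monotonicity finishes the job. For the $DW_6$ case, any extension produces $DW_6$ plus at least one edge, and Lemma \ref{lem3.9} then guarantees a $K_{3,3}+v$-minor. Hence $DW_6$ is the only 4-split of a degree-4 vertex of $DW_5$ that is $K_{3,3}+v$-minor-free, completing the proof.
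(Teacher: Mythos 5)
Your proposal is correct and follows essentially the same route as the paper: enumerate the minimal splits ($|A|=|B|=3$) of a degree-4 vertex up to symmetry, identify $DW_6$ as the unique $K_{3,3}+v$-minor-free outcome, verify the remaining cases by exhibiting explicit minors (the paper does this via figures, you via case-by-case drawings with the contraction of a rim edge such as $v_3v_4$, which does work), and dispose of non-minimal splits as edge additions to the minimal ones using minor-monotonicity together with Lemma \ref{lem3.9}. The only notable difference is that you use the full rim symmetry to reduce to a single vertex $v_1$ (seven graphs), whereas the paper redundantly treats three degree-4 vertices, and you state explicitly which split yields $DW_6$, a point the paper leaves implicit in its figures.
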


\begin{proof}
	We consider the minimal case for $|A|=|B|=3$, where $A, B$ are the subsets of the $N_G(v)$, $A \cup B$=$N_G(v)$. Let \{$v_1,v_2,\ldots,v_7$\} be vertices of $DW_5$ (shown in Figure \ref{fig:31}). By symmetry, we split the vertices of $v_4$, $v_3$ and $v_1$ respectively. Note that $d(v_1)=4$, $d(v_3)=4$ and $d(v_4)=4$. Thus the process of 4-splitting the vertex $v_1$, $v_3$ or $v_4$ is similar to that of Lemma \ref{lem3.3}.

	{\bf Case 1.}  Split the vertex $v_4$.
	
		$A$ $\cap$ $\{v_3, v_5\}$ $\neq$ $\emptyset$ and $B$ $\cap$ $\{v_3, v_5\}$ $\neq$ $\emptyset$ in the minimal case for $|A|=|B|=3$. Without loss of generality, we assume that $v_5$ $\in$ $A$, $v_3$ $\in$ $B$. We get seven graphs which contain $ K_{3,3}+v $ as a minor (see Figure \ref{fig:31}).
		
\begin{figure}[!htb]
	\centering
	\includegraphics[width=0.7\linewidth]{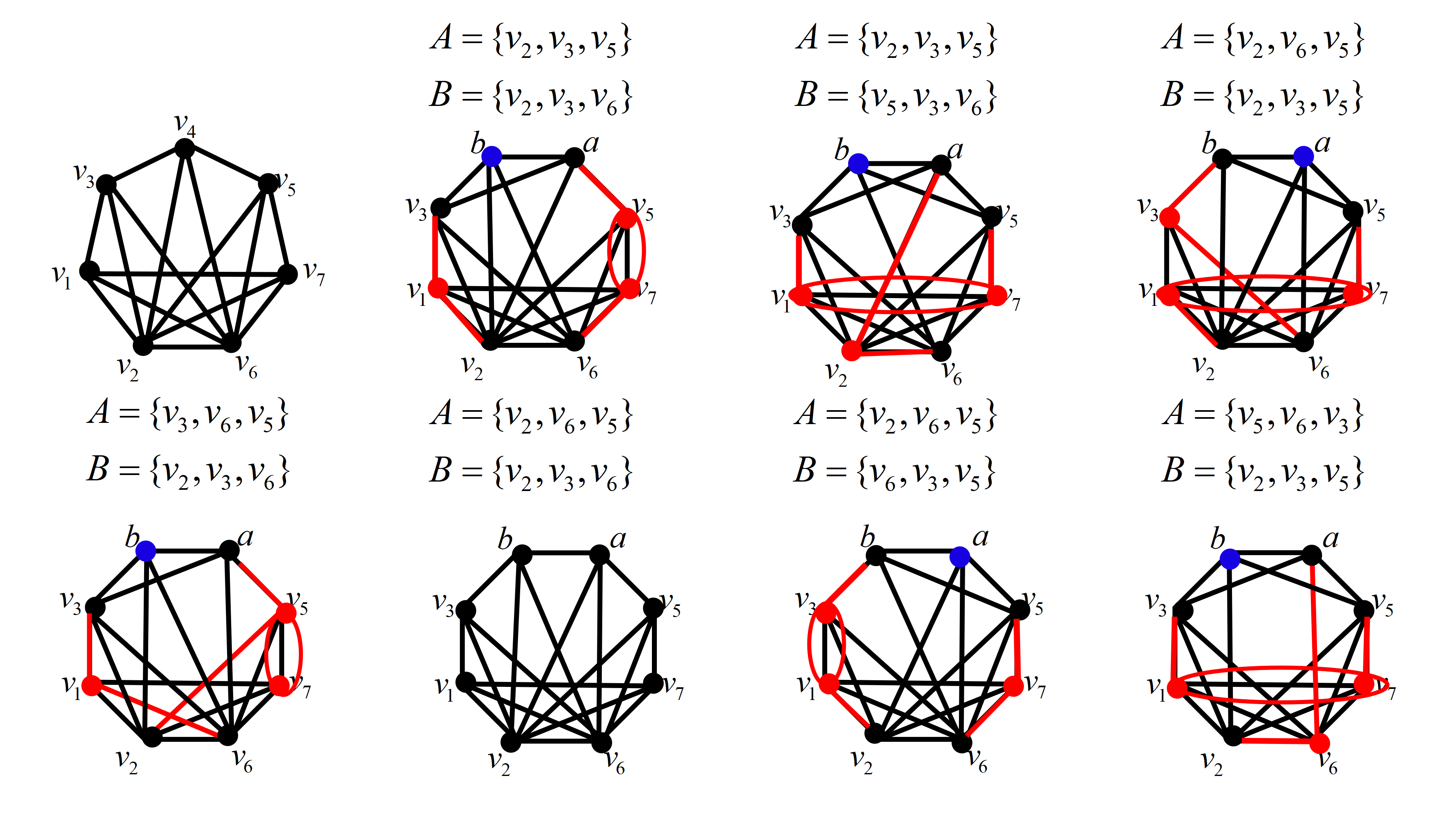}
	\caption{$DW_5$ and seven graphs in Case 1}
	\label{fig:31}
\end{figure}

		{\bf Case 2.}  Split the vertex $v_3$.
		
	$A$ $\cap$ $\{v_1, v_4\}$ $\neq$ $\emptyset$ and $B$ $\cap$ $\{v_1, v_4\}$ $\neq$ $\emptyset$ in the minimal case for $|A|=|B|=3$. Without loss of generality, we assume that $v_4$ $\in$ $A$, $v_1$ $\in$ $B$ by symmetry. It is clear that all the seven resulting graphs contain $ K_{3,3}+v $ as a minor (see Figure \ref{fig:32}).
		
\begin{figure}[!htb]
	\centering
	\includegraphics[width=0.7\linewidth]{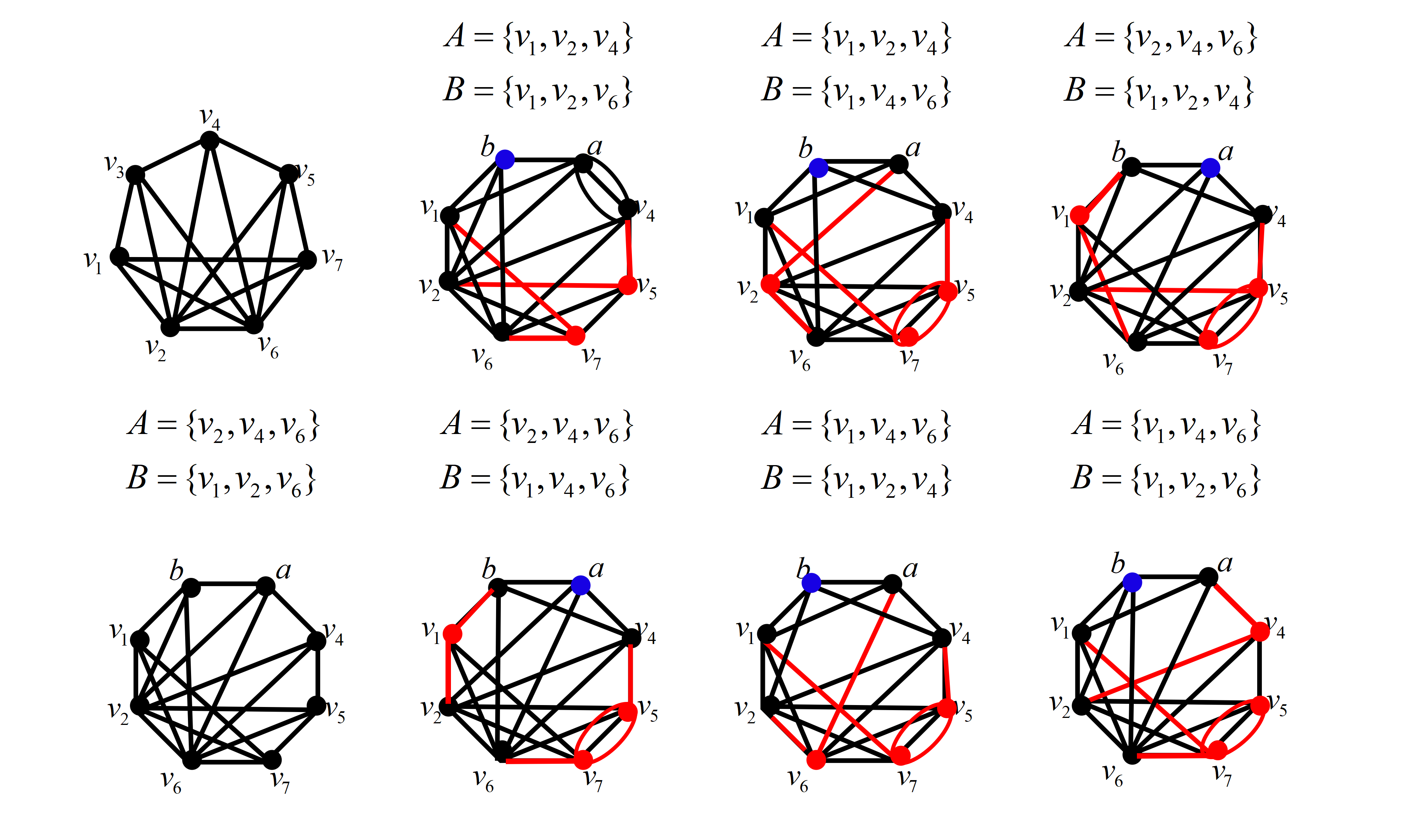}
	\caption{$DW_5$ and seven graphs in Case 2}
	\label{fig:32}
\end{figure}
	
	{\bf Case 3.}  Split the vertex $v_1$.
	
	$A$ $\cap$ $\{v_2, v_3\}$ $\neq$ $\emptyset$ and $B$ $\cap$ $\{v_2, v_3\}$ $\neq$ $\emptyset$ in the minimal case for $|A|=|B|=3$. It is not difficult to see that all the seven resulting graphs contain $ K_{3,3}+v $ as a minor (see Figure \ref{fig:33}).
	
\begin{figure}[!htb]
	\centering
	\includegraphics[width=0.7\linewidth]{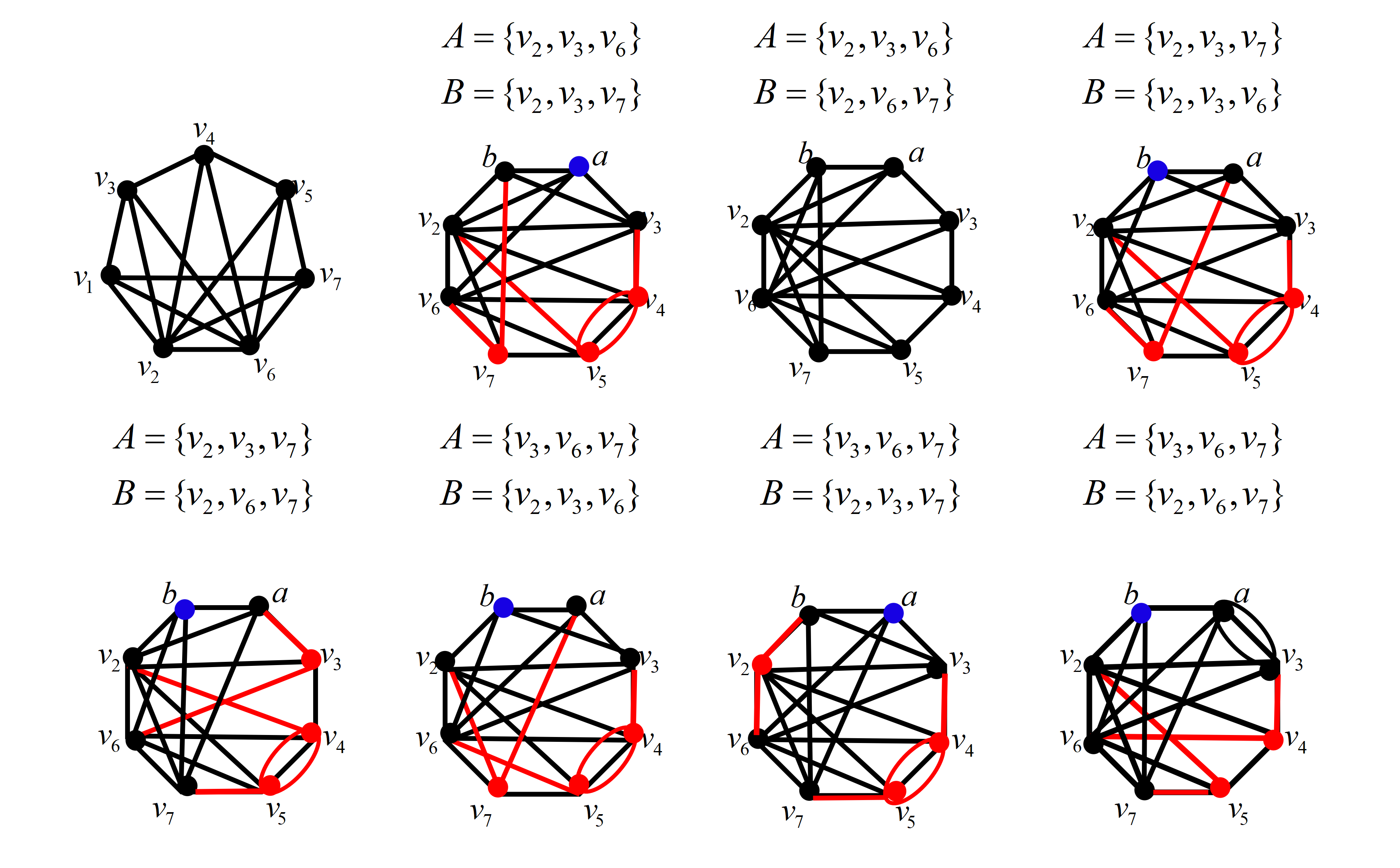}
	\caption{$DW_5$ and seven graphs in Case 3}
	\label{fig:33}
\end{figure}

   	In all minimal cases, the new graph $G^{'}$ generated by 4-splitting the vertices of degree 4 in $DW_5$ with $|A|=|B|=3$ besides $DW_6$ contains $ K_{3,3}+v $. In the other non-minimal cases, it is equivalent to add new edges $av_i$ or $bv_j$ to the above graphs mentioned in Figure \ref{fig:31}--\ref{fig:33}. According to Lemma \ref{lem3.9} and above results, the graph obtained by 4-splitting the vertex of degree 4 in $DW_5$ contains $ K_{3,3}+v $ as a minor besides $DW_6$.

\end{proof}

\begin{lem}\label{lem3.13}
	The graph obtained by 4-splitting the vertex of degree 4 in $DW_n$ contains a $K_{3,3}+v$-$ minor$ $($$n$$\geq$$6$$)$ besides $DW_{n+1}$. 
\end{lem}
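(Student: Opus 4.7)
\ The plan is to reduce the problem to the $n=5$ case already settled by Lemma~\ref{lem3.12}. By the rotational symmetry of $DW_n$, all degree-$4$ cycle vertices are equivalent, so I may assume that the vertex being split is $v_1$, with $N_{DW_n}(v_1)=\{v_2,v_n,u,w\}$, where $u,w$ denote the two hubs. I first treat the minimal case $|A|=|B|=3$, in which $|A\cap B|=2$; up to the $A\leftrightarrow B$ swap and the reflective symmetry fixing $v_1$ and swapping $v_2\leftrightarrow v_n$, only a few essentially distinct choices arise.

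The heart of the argument is a minor reduction from $DW_n$ to $DW_5$ that commutes with the 4-split. For $n\ge 6$ the internal cycle path $v_3,v_4,\ldots,v_{n-1}$ is disjoint from $v_1$, so one can contract $n-5$ of its edges in such a way that $\{v_3,\ldots,v_{n-1}\}$ collapses into two consecutive cycle vertices $\alpha,\beta$, each still adjacent to both hubs. The resulting graph is a copy of $DW_5$ with $v_1$ and its neighborhood intact under the identification $v_n\mapsto v_5$, $u\mapsto v_6$, $w\mapsto v_7$, $\alpha\mapsto v_3$, $\beta\mapsto v_4$. Because none of the contracted edges meets $v_1$, the \emph{same} contractions performed after 4-splitting $v_1$ in $DW_n$ produce exactly the 4-split of $v_1$ in $DW_5$ with the same partition $A,B$.

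Now invoke Lemma~\ref{lem3.12}: the reduced $DW_5$-split is either $DW_6$ or contains $K_{3,3}+v$ as a minor. In the second case the minor lifts back to the original split $G'$, and we are finished. In the first case, a direct inspection shows the split in question must be (up to the $A\leftrightarrow B$ swap) $A=\{v_2,u,w\}$ and $B=\{v_n,u,w\}$; performing this split on $DW_n$ replaces $v_1$ by two adjacent cycle vertices $a,b$, each joined to both hubs, so that $G'\cong DW_{n+1}$, which is precisely the allowed exception.

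Finally, the non-minimal splits $|A\cap B|\ge 3$ are obtained from the minimal ones by adjoining extra edges $av_i$ or $bv_j$, as in Lemmas~\ref{lem3.2}, \ref{lem3.10}, and~\ref{lem3.12}. If the underlying minimal split already contains a $K_{3,3}+v$-minor, the augmented graph does as well; if the underlying split is $DW_{n+1}$, then the augmented graph is $DW_{n+1}+e$, which contains $K_{3,3}+v$ by Lemma~\ref{lem3.9}. The step I expect to be the main obstacle is the minor-reduction of paragraph two: one must choose the two contracted arcs so that each collapses to a non-empty set and no parallel edges arise that would disturb the 4-split structure, and verifying this carefully is exactly what the hypothesis $n\ge 6$ is there to ensure.
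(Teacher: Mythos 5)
Your proposal is correct and follows essentially the same route as the paper: reduce the split of $DW_n$ to the corresponding split of $DW_5$ (Lemma~\ref{lem3.12}) by contracting cycle edges away from the split vertex, identify the exceptional partition as the one producing $DW_{n+1}$, and dispose of non-minimal splits as edge-additions handled via Lemma~\ref{lem3.9}. Your write-up is in fact more explicit than the paper's (which only asserts the reduction), but the underlying argument is the same.
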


\begin{proof}
	 It is straightforward to verify that 4-splitting a vertex of degree 4 in $DW_n$ can be considered as 4-spliting a vertex of degree 4 in $DW_5$. Hence the new graph $G^{'}$ generated by 4-splitting the vertice of degree 4 in $DW_n$ in the minimal case contains $K_{3,3}+v$ as a minor with the exception of $DW_{n+1}$. Since the graph obtained in the non-minimal cases is equivalent to adding edges to the above graphs, it contains $K_{3,3}+v$ as a minor. Thus every graph obtained by 4-splitting the vertex of degree 4 in $DW_n$ contains $ K_{3,3}+v $ as a minor besides $DW_{n+1}$. 
\end{proof}

\begin{lem}\label{lem3.14}
	Every 4-split of $K_6$$\backslash$$e$ contains a $ K_{3,3}+v $-$ minor$.
	
\end{lem}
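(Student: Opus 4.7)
The plan is to follow the template of Lemmas \ref{lem3.2} and \ref{lem3.3}: exploit the symmetries of $K_6\backslash e$ to cut the analysis down to a short list of minimal $4$-splits (those with $|A|=|B|=3$), exhibit a $K_{3,3}+v$-minor in each of the resulting graphs, and then handle non-minimal cases by the observation that they are obtained from minimal ones by adding extra edges $av_i$ or $bv_j$, under which any minor is preserved.

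Label $V(K_6\backslash e)=\{v_1,\ldots,v_6\}$ so that the missing edge is $v_5v_6$; then $v_5,v_6$ have degree $4$ and $v_1,\ldots,v_4$ have degree $5$. The automorphism group is transitive on $\{v_5,v_6\}$ and on $\{v_1,\ldots,v_4\}$, so it suffices to analyse the $4$-split of one degree-$4$ vertex (say $v_5$) and of one degree-$5$ vertex (say $v_1$). When splitting $v_5$, the condition $|A|=|B|=3$ with $A\cup B=\{v_1,\ldots,v_4\}$ forces $|A\cap B|=2$, and since the stabiliser of $v_5$ acts as the full symmetric group on $\{v_1,\ldots,v_4\}$ there is essentially a single minimal split up to isomorphism; I would verify directly that it already contains $K_{3,3}+v$ as a subgraph, using the degree-$4$ vertex $v_6$ as the central vertex and a $4$-cycle inside $\{v_1,\ldots,v_4\}$ as the rim. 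When splitting $v_1$, the minimal condition forces $|A\cap B|=1$, and using the stabiliser of $v_1$ (which permutes $\{v_2,v_3,v_4\}$ freely and swaps $\{v_5,v_6\}$) the possibilities collapse to a short list of orbits indexed by (i) whether the common vertex lies in $\{v_2,v_3,v_4\}$ or in $\{v_5,v_6\}$, and (ii) how $\{v_5,v_6\}$ is distributed between $A\setminus B$ and $B\setminus A$. For each orbit representative I would display an explicit $K_{3,3}+v$-minor, typically by taking one of the remaining low-degree vertices as the central vertex and recognising a $K_{3,3}$ on the other six vertices with the extra edge $ab$ playing a key role.

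Non-minimal $4$-splits, in which $|A|+|B|>5$, correspond to adding one or more edges $av_i$ or $bv_j$ to a minimal split, so they inherit any $K_{3,3}+v$-minor already present. The main obstacle I expect is purely combinatorial: correctly enumerating the orbits under the stabiliser of $v_1$ in the degree-$5$ case so that no inequivalent minimal split is overlooked, and then producing compatible branch sets that simultaneously realise the bipartition of $K_{3,3}$ and the central $4$-cycle. Because $K_6\backslash e$ is dense and the split introduces the extra edge $ab$, there are typically many ways to complete the required $K_{3,3}+v$ structure, so I anticipate that the minor constructions themselves will be routine once the case list is firmly in hand.
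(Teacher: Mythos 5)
Your proposal is correct in substance, but it takes a genuinely different route from the paper. The paper's proof of this lemma is a short reduction with no new case analysis: it notes that $K_6\backslash e$ is $DW_4$ plus one edge, so a 4-split of $K_6\backslash e$ can be regarded as adding an edge to a 4-split of $DW_4$; by Lemmas \ref{lem3.2} and \ref{lem3.16} the only $K_{3,3}+v$-minor-free 4-splits of $DW_4$ are $\Gamma_1$ and $DW_5$, and by Lemmas \ref{lem3.1} and \ref{lem3.8} adding any edge to either of these produces the minor, whence every 4-split of $K_6\backslash e$ has a $K_{3,3}+v$-minor. You instead enumerate the splits of $K_6\backslash e$ directly: using the automorphism group $S_4\times S_2$ you reduce to one minimal split of a degree-4 vertex (here indeed $|A\cap B|=2$, the split is unique up to isomorphism, and the resulting $7$-vertex graph contains $K_{3,3}+v$ as a spanning subgraph with the other degree-4 vertex as centre, exactly as you predict) and to three orbit representatives of minimal splits of a degree-5 vertex, each of which does contain the minor --- I verified this, though in one of the three the centre of the $K_{3,3}+v$ is most naturally a degree-5 or degree-6 vertex rather than one of the ``remaining low-degree vertices,'' so treat that remark as a heuristic, not a rule; non-minimal splits then follow by edge addition exactly as you say. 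The paper's reduction buys brevity and reuse of earlier lemmas, but it silently glosses the case where the split vertex is an endpoint of the extra edge (where removing that edge from $A$ or $B$ can drop a side below size 3, so the result is not literally ``a 4-split of $DW_4$ plus an edge'' without further comment); your direct enumeration sidesteps that subtlety entirely, at the cost of four explicit minor certificates which you have promised but not yet displayed.
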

\begin{proof}
	 According to the Lemma \ref{lem3.1} and Lemma \ref{lem3.8}, the graph obtained by adding an edge to $\Gamma_1$ or $M_7$ contains $K_{3,3}+v $ as a minor. $K_6$$\backslash$$e$ is a graph by adding an edge to the $DW_4$. The new graph $G^{'}$ generated by 4-splitting vertices of $K_6$$\backslash$$e$ can be regarded as a graph by adding an edge to the graph  generated by 4-splitting vertices of $DW_4$. Thus, every 4-split of $K_6$$\backslash$$e$ contains $K_{3,3}+v$ as a minor.
\end{proof}

\begin{lem}\label{lem3.15}
	Every 4-split of $K_6$ contains a $ K_{3,3}+v $-$ minor$.
	
\end{lem}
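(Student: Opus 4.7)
The plan is to reduce Lemma~\ref{lem3.15} to Lemma~\ref{lem3.14} by exactly the device used to pass from $DW_4$ to $K_6\backslash e$ in the proof of Lemma~\ref{lem3.14}: exploit the identity $K_6=(K_6\backslash e)+e$ together with the local nature of the 4-split operation.

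First, since $K_6$ is vertex-transitive, it suffices to consider the 4-split at a single arbitrary vertex $v$. The vertex $v$ has degree $5$, and there are $\binom{5}{2}=10$ edges of $K_6$ missing $v$; fix any such edge $e$. Because $e$ is not incident to $v$, we have $N_{K_6}(v)=N_{K_6\backslash e}(v)$, and any legal partition $A\cup B=N_{K_6}(v)$ with $\min\{|A|,|B|\}\geq 3$ is also a legal 4-split partition at $v$ in $K_6\backslash e$.

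Second, I would observe that the 4-split at $v$ modifies only edges in the star at $v$, so it commutes with the removal or restoration of $e$. Consequently, for any partition $(A,B)$ the graph $G'$ produced by the corresponding 4-split of $K_6$ at $v$ equals the 4-split of $K_6\backslash e$ at $v$ using the same partition, with the edge $e$ added back. By Lemma~\ref{lem3.14}, every 4-split of $K_6\backslash e$ already contains $K_{3,3}+v$ as a minor, and since the minor relation is preserved under edge addition, $G'$ inherits this minor.

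The only real obstacle is the routine bookkeeping to confirm that 4-splitting commutes with the $e$-modification, which is immediate from the definition of a 4-split. No further case analysis is required, because the heavy combinatorial work has already been done in Lemmas~\ref{lem3.1}, \ref{lem3.2}, \ref{lem3.8}, \ref{lem3.14}, and \ref{lem3.16}, and Lemma~\ref{lem3.15} then follows in a single sentence.
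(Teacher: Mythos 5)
Your argument is correct, and it takes a slightly different reduction than the paper. The paper proves Lemma~\ref{lem3.15} by writing $K_6$ as $DW_4$ plus the two diagonals of the rim 4-cycle, viewing any 4-split of $K_6$ as a 4-split of $DW_4$ with two edges added back, and then invoking the classification of the $DW_4$-splits (Lemmas~\ref{lem3.2} and \ref{lem3.16}) together with Lemmas~\ref{lem3.1} and \ref{lem3.8} (adding an edge to $\Gamma_1$ or to $DW_5$ creates the minor). You instead write $K_6=(K_6\backslash e)+e$, choosing $e$ disjoint from the split vertex $v$, and reduce directly to Lemma~\ref{lem3.14}. Your version buys something real in rigor: since $v$ has degree 5 in $K_6$, an edge $e$ avoiding $v$ always exists, so $N_{K_6}(v)=N_{K_6\backslash e}(v)$, the same partition $(A,B)$ is legal in both graphs, and the commutation of the split with restoring $e$ is immediate; the paper's reduction to $DW_4$ silently includes splits at vertices incident to the added diagonals, where the induced partition of $N_{DW_4}(v)$ need not satisfy $\min\{|A|,|B|\}\ge 3$ and hence is not literally a 4-split of $DW_4$, so the paper's one-line commutation claim needs extra justification there. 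The trade-off is that your proof leans entirely on Lemma~\ref{lem3.14} as a black box (which is legitimate, as it precedes this lemma and does not depend on it), whereas the paper's route is self-contained back to the $DW_4$ analysis; the vertex-transitivity remark in your write-up is harmless but unnecessary, since the argument works verbatim at every vertex.
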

\begin{proof}
	According to the Lemma \ref{lem3.1} and Lemma \ref{lem3.8}, the graph obtained by adding an edge to $\Gamma_1$ or $M_7$ contains $K_{3,3}+v $ as a minor. $K_6$ is a graph by adding two edges to the $DW_4$. The new graph $G^{'}$ generated by 4-splitting vertices of $K_6$ can be regarded as a graph by adding two edges to the graph generated by 4-splitting vertices of $DW_4$. Thus, every 4-split of $K_6$ contains $K_{3,3}+v$ as a minor.
\end{proof}

\textbf{Proof of Theorem \ref{thm1.2}}. The result follows from Lemmas \ref{lem3.1}---\ref{lem3.15}. \qed

\end{document}